\documentclass[11pt]{amsart}
\usepackage{fullpage}
\usepackage{amsmath, amssymb, amsthm}
\usepackage[foot]{amsaddr}

\usepackage{graphicx}
\usepackage{xcolor}
\usepackage{tikz}
\usepackage[colorlinks=true,linkcolor=red,citecolor=blue,urlcolor=cyan]{hyperref}

\usepackage{algorithm}
\usepackage{algpseudocode}
\algrenewcommand\algorithmicrequire{\textbf{Input:}}
\algrenewcommand\algorithmicensure{\textbf{Output:}}

\newtheorem{theorem}{Theorem}[section]
\newtheorem{proposition}[theorem]{Proposition}
\newtheorem{lemma}[theorem]{Lemma}
\newtheorem{corollary}[theorem]{Corollary}
\theoremstyle{remark}
\newtheorem{remark}[theorem]{Remark}

\newcommand{\R}{\mathbb{R}}
\newcommand{\Hess}{\operatorname{Hess}}
\newcommand{\tr}{\operatorname{tr}}
\newcommand{\diam}{\operatorname{diam}}
\renewcommand{\v}{\mathbf{v}}
\newcommand{\x}{\mathbf{x}}
\newcommand{\w}{\mathbf{w}}
\newcommand{\y}{\mathbf{y}}
\renewcommand{\d}{\mathbf{d}}
\newcommand{\p}{\mathbf{p}}
\newcommand{\e}{\mathbf{e}}
\newcommand{\calQ}{\mathcal{Q}}
\renewcommand{\P}{\mathcal{P}}
\newcommand{\bzero}{\mathbf{0}}

\usepackage[backend=biber,giveninits=true,sorting=nyt,style=alphabetic,natbib=true,maxcitenames=2,maxbibnames=10,url=false,doi=true,backref=false]{biblatex}
\renewbibmacro{in:}{\ifentrytype{article}{}{\printtext{\bibstring{in}\intitlepunct}}}

\begin{document}
\title{Maximizing the fundamental Laplace--Neumann \\ eigenvalue on quadrilaterals}

\author{Ryoki Endo}
\address{Faculty of Science, Niigata University, Niigata, Japan. JSPS Research Fellow (PD). }
\email{endo@m.sc.niigata-u.ac.jp}
\thanks{Ryoki Endo was supported by JSPS KAKENHI Grant Number JP24KJ1170.}
\author{Braxton Osting}
\address{Department of Mathematics, University of Utah, Salt Lake City, UT, USA}
\email{osting@math.utah.edu}
\thanks{Braxton Osting acknowledges partial support from NSF DMS-2136198 and DMS-2513175.}

\date{\today}

\keywords{Neumann eigenvalues, shape optimization, P\'olya--Szeg\H{o} conjecture, spectral isoperimetric inequalities, computer-assisted proof, interval arithmetic}
\subjclass[2020]{
Primary
35P15; 
Secondary
35P05, 
49Q10, 
49R05, 
65G20, 
65N25. 
}

\begin{abstract} 
We prove that among convex planar quadrilaterals of equal area, the square uniquely maximizes the first nonzero Laplace--Neumann eigenvalue. This is the quadrilateral case of the Neumann analogue of the long-standing P\'olya--Szeg\H{o} conjecture, which asserts that the regular $n$-gon minimizes the first Laplace--Dirichlet eigenvalue among $n$-gons of equal area. The proof uses a Rayleigh--Ritz lower bound, obtained from the span of the first five nonconstant Neumann modes of the square, on a smooth auxiliary functional whose minimization implies the original maximization.
Local maximality at the square follows from a symmetry-reduced Hessian, computed in closed form, together with a certified second-order difference-quotient test that establishes the local inequality on an explicit ball around the square. Global maximality is then established by a certified box covering of the parameter space, certifying on each box a direct upper bound on the first Rayleigh--Ritz eigenvalue. Both certifications operate entirely on integrals over a fixed reference square and avoid \emph{a posteriori} finite-element bounds on the perturbed quadrilateral. 
\end{abstract}

\maketitle

\section{Introduction}\label{s:Intro}

P\'olya conjectured that the regular $n$-gon minimizes the first Laplace--Dirichlet eigenvalue among $n$-gons of prescribed area~\cite{Polya1948,PolyaSzego1951}. Despite its apparent simplicity, the conjecture is notoriously difficult: only the cases $n = 3$ and $n = 4$ have been settled, both via Steiner symmetrization~\cite{Henrot2006}. Bogosel and Bucur~\cite{BogoselBucur2024} showed that for each $n \geq 5$ the conjecture reduces to finitely many certified numerical computations, and that local minimality of the regular $n$-gon reduces to a single such computation, which they carry out for $n = 5, 6, 7, 8$. 

By analogy with the Faber--Krahn and Szeg\H{o}--Weinberger inequalities, the regular $n$-gon is suspected to be a maximizer for the first nonzero Laplace--Neumann eigenvalue in the same class; for the maximization of higher Neumann eigenvalues among general planar domains, see~\cite{BucurMartinetOudet2023}. For $n = 3$ the equilateral triangle was shown to be the maximizer by Laugesen and Siudeja, via a transplantation argument relying on affine deformations especially suited to triangles~\cite{LaugesenSiudeja2009}. We consider the problem for quadrilaterals and prove the following result.

\begin{theorem} \label{t:Main}
Among convex planar quadrilaterals of equal area, the square $\square$ is, up to congruence, the unique maximizer of the first nonzero Laplace--Neumann eigenvalue $\mu_1$, with $|\square| \mu_1(\square) = \pi^2$.
\end{theorem}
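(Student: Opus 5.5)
The plan follows the abstract: reduce the maximization to minimizing a smooth functional over a compact parameter space, settle a neighborhood of the square by second-order analysis, and settle everything else by a certified box covering.

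\emph{Parametrization.} Every convex quadrilateral $Q$ is the image of the reference square $\square=(0,1)^2$ under a bilinear map $\Phi_{\p}$, determined by its four vertices $\p=(\p_1,\dots,\p_4)$. Modulo congruence and scaling we normalize, for instance $\p_1=\bzero$, $\p_2=(1,0)$, $|Q|=1$, and use the dihedral symmetry of the vertex labelling. This leaves a compact parameter set $\P\subset\R^4$. Degenerate quadrilaterals, meaning triangles or very elongated shapes, would make $\P$ noncompact. We handle them by a crude a priori bound: if $\diam Q$ is large at unit area, a one-dimensional test function $x\mapsto\cos(\pi x/\diam Q)$, suitably orthogonalized, gives $\mu_1(Q)$ well below $\pi^2$. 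Near triangles we use continuity together with the Laugesen--Siudeja bound $\mu_1\le$ the equilateral value. That value is $16\pi^2/(9\sqrt3)\cdot$(area normalization), roughly $16\pi^2/27\cdot\sqrt3\approx 10.13$ at unit area, so a separate margin is needed there.

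\emph{Smooth auxiliary functional.} Pull back the Rayleigh quotient to $\square$:
\[
\frac{\int_\square (\nabla u)^T A_{\p}\nabla u}{\int_\square u^2 J_{\p}},\qquad A_{\p}=J_{\p}\,D\Phi_{\p}^{-1}D\Phi_{\p}^{-T},\quad J_{\p}=\det D\Phi_{\p}.
\]
Take the trial space $V$ spanned by the five nonconstant Neumann modes $\cos\pi x$, $\cos\pi y$, $\cos\pi x\cos\pi y$, $\cos2\pi x$, $\cos2\pi y$, each orthogonalized against constants in the $J_{\p}$-weighted inner product. By Rayleigh--Ritz, $\mu_1(Q)\le\lambda_1^{RR}(\p)$, the smallest eigenvalue of a $5\times5$ generalized matrix pencil $(K(\p),M(\p))$. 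Its entries are integrals over $\square$ of rational functions of $\p$. At the square we have $\lambda_1^{RR}=\pi^2$ with multiplicity two. To get smoothness we use a symmetric function of the pencil instead: for example $F(\p)=\tr\bigl(P(\p)\bigr)$, where $P$ is the $2\times2$ block of the Schur complement, or $-\log\det$ of the lowest two-dimensional block. We choose $F$ so that $F(\p)\ge\pi^2$ forces $\lambda_1^{RR}(\p)\ge\pi^2$ and hence, wait, the implication must run the other way. We need $F(\p)<\pi^2$ with $F\ge\lambda_1^{RR}$ away from the square, and $F$ minimal at the square in the sense required. So take $F$ to be the mean of the two lowest Ritz values; since $\lambda_1^{RR}\le F$, proving $F\le\pi^2$ with equality only at the square suffices.

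\emph{Local step.} By the symmetry of $\square$, the gradient of $F$ vanishes at the square. The Hessian in the four symmetry-adapted directions (rectangle stretch, rhombic shear, trapezoid, kite) is computed in closed form by differentiating $K$ and $M$; these are polynomial integrals of trigonometric functions. We check it is negative definite. Negativity on an explicit ball $B_r$ then comes from an interval-arithmetic bound on second difference quotients of $F$, equivalently a Lipschitz bound on the Hessian, over $B_r$.

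\emph{Global step and main obstacle.} Cover $\P\setminus B_r$ by boxes. On each box, enclose all entries of $K$ and $M$ by interval arithmetic, using Gauss quadrature with certified remainders or exact integration of the bilinear Jacobian expressions. Then certify $\lambda_1^{RR}<\pi^2-\varepsilon$, for instance by exhibiting a fixed vector $c$ with certified Rayleigh quotient $c^TKc/c^TMc<\pi^2$. This avoids eigenvalue enclosures. The main obstacle is the region just outside $B_r$, together with near-degenerate shapes. There the margin below $\pi^2$ is tiny, so $r$ must be large enough, and tuning $r$, the box refinement, and the test vectors is the crux. Uniqueness follows because the inequality is strict off the square and the normalization removes congruences.
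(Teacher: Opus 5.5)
Your reduction breaks at the choice of smooth functional. You take $F$ to be the area-normalized arithmetic mean $\tfrac12(\lambda_1^{RR}+\lambda_2^{RR})$, and you plan to show $F\le\pi^2$ with equality only at the square, via a negative-definite Hessian. That inequality is false arbitrarily close to the square.

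For a rectangle with sides $\ell_1,\ell_2$, the two lowest nonzero Neumann eigenvalues are $\pi^2/\ell_1^2$ and $\pi^2/\ell_2^2$. Hence $|Q|\,\tfrac{\mu_1+\mu_2}{2}=\tfrac{\pi^2}{2}\bigl(\tfrac{\ell_1}{\ell_2}+\tfrac{\ell_2}{\ell_1}\bigr)>\pi^2$ unless $\ell_1=\ell_2$. Ritz values dominate the true eigenvalues index by index, so your $F$ exceeds $\pi^2$ at every non-square rectangle, whatever trial space you use. In the paper's coordinates and five-mode space one gets $|Q_\p|\tfrac{\lambda_1+\lambda_2}{2}=\pi^2(1+3a^2)+O(a^3)$ along the rectangle direction. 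The same quantity also increases along the rhombus direction $d$ (coefficient $256/\pi^2-\pi^2>0$) and decreases along the trapezoids, so the square is a saddle of $F$, not a maximum.

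Your other candidates fail the same way. The trace of the $2\times2$ Schur block is this arithmetic mean to leading order. A $\log\det$ of the cluster block amounts to the geometric mean, which equals $\pi^2/|Q|$ exactly on true rectangles and so is at least that for Ritz values; in the paper's space, $|Q_\p|\sqrt{\lambda_1\lambda_2}=\pi^2(1+a^2)+O(a^3)$.

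The cause is structural. Write the split double eigenvalue as $\lambda_{1,2}=\pi^2+s\mp\delta$, with $\delta=O(t)$ and $s=O(t^2)$. Near the square, $\lambda_1<\pi^2$ is driven by the first-order splitting $\delta$. These means discard $\delta$ (or nearly so) while keeping the curvature $s$, which can be positive.

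The missing idea is the paper's: use the harmonic mean. Concretely, prove $f_2(\p)=\tfrac{1}{2|Q_\p|}\bigl(\lambda_1^{-1}+\lambda_2^{-1}\bigr)>\pi^{-2}$. This stays on the correct side of $\lambda_1$, since the harmonic mean dominates the minimum. It also expands as $\pi^2+s-\delta^2/\pi^2+\cdots$, and the $-\delta^2/\pi^2$ term exactly absorbs the positive curvature along $a$ and $d$; there the paper's entries $H_{aa}=H_{dd}=2/\pi^2$ come entirely from the area factor. Even so, definiteness is delicate. With only $\cos\pi x,\cos\pi y$ the Hessian is indefinite in the trapezoid directions, and the coupling to the modes $(1,1),(2,0),(0,2)$ is what yields $H_2\succ0$.

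With that substitution your outline is broadly the paper's. Your per-box certificate by a fixed coefficient vector $c$ (a certified bound on $c^\top K c/c^\top M c$) is a legitimate and simpler substitute for the paper's \texttt{veigs} enclosure.

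Two smaller problems remain.
\begin{itemize}
\item The normalization $\p_1=\bzero$, $\p_2=(1,0)$, $|Q|=1$ imposes five conditions and leaves three parameters, not four.
\item The near-triangle treatment is unnecessary and unsound as written. The equilateral value is $|T|\mu_1(T)=4\sqrt3\pi^2/9\approx7.60$, not $10.13$, and ``continuity'' gives no explicit neighbourhood. Triangles are not what destroys compactness. The paper bounds elongation with Kr\"oger's inequality $\diam(Q)^2\mu_1(Q)\le 4j_{0,1}^2$ and lets boxes run up to $\partial\P$, where the Rayleigh--Ritz bound remains valid.
\end{itemize}
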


The result of Theorem~\ref{t:Main} is not obvious as non-uniqueness and symmetry-breaking do occur in related shape optimization problems.  Indeed, $\mu_1$ is the $p = 2$ member of the family
$
\Lambda_p(\Omega) := \min\left\{ \frac{\int_\Omega |\nabla u|^p\, dx}{\int_\Omega |u|^p\, dx} \colon \int_\Omega |u|^{p-2}\, u\, dx = 0 \right\}
$
of first nontrivial Neumann eigenvalues of the $p$-Laplacian $-\Delta_p u = \operatorname{div}\!\bigl(|\nabla u|^{p-2}\nabla u\bigr)$; 
at $p = 2$ the constraint reads $\int_\Omega u = 0$ and $\Lambda_2(\Omega) = \mu_1(\Omega)$. The scale-invariant quantity is $|\Omega|^{p/2}\Lambda_p(\Omega)$, which at $p = 2$ is the $|\Omega|\,\mu_1(\Omega)$ maximized in Theorem~\ref{t:Main}. At both endpoints of this family, that conclusion fails.
Since $\Lambda_p^{1/p} \to \frac{2}{\diam(\Omega)}$ as $p \to \infty$~\cite{EspositoKawohlNitschTrombetti2015}, the corresponding maximization among $n$-gons of fixed area is the largest-small-$n$-gon problem: among $n$-gons of unit diameter, find the one of largest area. For hexagons, Graham showed the regular hexagon is not optimal: a symmetry-breaking hexagon has strictly larger area~\cite{Graham1975}. For quadrilaterals the square is a maximizer, but not unique; a convex quadrilateral of diameter $D$ has area at most $\tfrac{1}{2}D^2$, with equality exactly when its diagonals are perpendicular and both have length $D$.

At $p = 1$, $\Lambda_1(\Omega) = h(\Omega) := \inf_{E} \frac{P(E; \Omega)}{\min(|E|, |\Omega \setminus E|)}$, the relative Cheeger constant, with $P(E; \Omega)$ the perimeter of $E$ inside $\Omega$~\cite{KawohlFridman2003}; for the \emph{classical} Cheeger constant, by contrast, the regular $n$-gon is optimal among $n$-gons of given area~\cite{BucurFragala2016}. Here the square fails to be optimal; for the scale-invariant quantity $|Q|^{1/2} h(Q)$ the square gives exactly $2$, while a particular isosceles trapezoid has a slightly larger value \cite{Dorin2026}. 
Symmetry thus breaks at $p = 1$ and $p = \infty$, whereas at $p = 2$ the square is the unique maximizer (Theorem~\ref{t:Main}).

\subsection*{Outline of the paper}
We first discuss a parameterization of convex quadrilaterals in Section~\ref{s:QuadParam}. Our proof of Theorem~\ref{t:Main}, overviewed in Section~\ref{s:RayleighRitz}, reduces the eigenvalue maximization to a finite-dimensional problem and proceeds in two steps, one local and one global. As an auxiliary quantity we use $F(Q) := \frac{1}{2|Q|}\bigl(\frac{1}{\mu_1(Q)} + \frac{1}{\mu_2(Q)}\bigr)$, which is smooth even at the square (where $\mu_1$ and $\mu_2$ coincide) and satisfies $F(Q) \le \tfrac{1}{|Q|\mu_1(Q)}$. Hence, if the square minimizes $F$, it maximizes $|Q|\mu_1(Q)$. A Rayleigh--Ritz lower bound from a $5$-dimensional trigonometric subspace gives a real-analytic function $f_2$ on a four-dimensional parameter space of convex quadrilaterals, $\p \mapsto Q_\p$ with $Q_\bzero = \square$ (Section~\ref{s:QuadParam}), satisfying $f_2 \le F$ with equality at the square. Combined with Kr\"oger's diameter inequality, which restricts attention to a bounded set $\P_{\mathrm K}$ of parameters, the proof reduces to showing $|Q_\p|\mu_1(Q_\p) < \pi^2$ on $\P_{\mathrm K} \setminus \{\bzero\}$.

Section~\ref{app:common} develops the computational framework common to the local and global steps: the stiffness and mass matrices $K(\p), M(\p)$ of the Rayleigh--Ritz approximation are pulled back to the fixed reference square as explicit parameter-dependent integrals, yielding a real-analytic $f_2$ with certified cluster margins on the local ball (Proposition~\ref{l:M3-bound}).

In Section~\ref{s:LocalMax} we establish the square as a strict local maximizer on a closed ball of explicit radius 
$|\p| = \rho^\sharp := \lambda_{\min}(H_2) = \tfrac{3232}{27\pi^6} \approx 0.1245$ in the parameter space. The dihedral symmetry of the square forces the gradient of $f_2$ to vanish, and we compute the Hessian $H_2 := \Hess_\p f_2(\bzero)$ in closed form, by second-order perturbation theory of the degenerate eigenvalue pair, with entries in $\mathbb{Q}(\pi^2)$, and verify that it is positive-definite. We then certify $f_2(\p) > \pi^{-2}$ on the ball directly, by a second-order difference-quotient test along rays $\p = t\e$ through the square. It uses no third derivative and no enclosure of the Hessian field, and is sound at the cluster degeneracy because it works with the symmetric functions of the bottom pair.

In Section~\ref{s:GlobalMax} we promote the local statement to a global one. The bounded parameter region outside the ball of radius $|\p| = \tfrac{\rho^\sharp}{2}$ is covered by finitely many axis-aligned boxes, overlapping the local ball on the collar $\tfrac{\rho^\sharp}{2} \leq \|\p\| \leq \rho^\sharp$. On each box the smallest generalized eigenvalue $\lambda_1(\p)$ of the pencil is enclosed by the verified generalized-eigenvalue solver \texttt{veigs}~\cite{LiuYanagisawa2025veigs}, in an index-certified enclosure. The resulting certified upper bound yields $|Q_\p|\lambda_1(\p) < \pi^2$ and hence $|Q_\p|\mu_1(Q_\p) < \pi^2$. The matrix entries used in all certifications are defined by parameter-dependent integrals over the \emph{fixed} reference square; see Section~\ref{app:integral-rep}.

Supporting material is collected in the appendices: Appendix~\ref{app:diff-quotient} describes the certified computations establishing the spectral margins and the second-order difference-quotient test that certifies $f_2 > \pi^{-2}$ on the local ball (Proposition~\ref{l:M3-bound} and Corollary~\ref{t:LocalMinExplicit}), and Appendix~\ref{app:implementation} documents the implementation of the certified per-box test.

\subsection*{Numerical certificates.} The proof of Theorem~\ref{t:Main} has two kinds of numerical input: 
(1) the spectral margins and the second-order difference-quotient test certifying $f_2 > \pi^{-2}$ on the ball $\overline{B(\bzero, \rho^\sharp)}$, used in the local step (see Proposition~\ref{l:M3-bound} and Appendix~\ref{app:diff-quotient}) and
(2) a finite box cover of $\Omega_{\mathrm{II}}$, used in the global step (see Theorem~\ref{t:box-cover-terminates} and Appendix~\ref{app:implementation}). Throughout, we distinguish computations performed in floating point, from \emph{validated} computations, performed in interval arithmetic with rigorous enclosures. Both inputs are certified using MATLAB, \textsc{Intlab}~\cite{Rump1999INTLAB}, and the index-aware generalized-eigenvalue solver \texttt{veigs}~\cite{LiuYanagisawa2025veigs}; the code is available~\cite{SupplementaryCode}.

\section{Parameterization of quadrilaterals}
\label{s:QuadParam}

Let $\calQ$ denote the class of convex planar quadrilaterals. Up to translation, rotation, and dilation, a convex quadrilateral is determined by four parameters, as we now describe.

\subsection*{Parameterization}
Let $\v_1^0, \v_2^0, \v_3^0, \v_4^0$ denote the vertices of the unit square $\square = \left[-\tfrac{1}{2}, \tfrac{1}{2}\right]^2$ (centered at the origin), in counter-clockwise order:
\begin{equation*}
\v_1^0 = \begin{pmatrix} -\tfrac{1}{2} \\ -\tfrac{1}{2} \end{pmatrix}, \quad
\v_2^0 = \begin{pmatrix} +\tfrac{1}{2} \\ -\tfrac{1}{2} \end{pmatrix}, \quad
\v_3^0 = \begin{pmatrix} +\tfrac{1}{2} \\ +\tfrac{1}{2} \end{pmatrix}, \quad
\v_4^0 = \begin{pmatrix} -\tfrac{1}{2} \\ +\tfrac{1}{2} \end{pmatrix}.
\end{equation*}
For $\p = (a, b, c, d) \in \P \subset \mathbb R^4$, we denote by $Q_{\p} \in \calQ$ the quadrilateral with vertices $\v_i = \v_i^0 + \boldsymbol{\delta}_i(\p)$, where the four vertex displacements $\boldsymbol{\delta}_i(\p) \in \mathbb R^2$ are linear in $\p$:
{\small 
\begin{equation*}
\boldsymbol{\delta}_1(\p) = \tfrac{1}{2}\begin{pmatrix} a + d - b \\ -a + d + c \end{pmatrix},\;\;
\boldsymbol{\delta}_2(\p) = \tfrac{1}{2}\begin{pmatrix} -a + d + b \\ -a - d - c \end{pmatrix},\;\;
\boldsymbol{\delta}_3(\p) = \tfrac{1}{2}\begin{pmatrix} -a - d - b \\ a - d + c \end{pmatrix},\;\;
\boldsymbol{\delta}_4(\p) = \tfrac{1}{2}\begin{pmatrix} a - d + b \\ a + d - c \end{pmatrix}.
\end{equation*}}
Note $\p = \bzero$ corresponds to the unit square, $\square = Q_{\bzero}$. The displacements $\{\boldsymbol{\delta}_i(\p)\}_{i=1}^4$ satisfy
\begin{equation}
\label{e:normalization}
\sum_{i=1}^4 \boldsymbol{\delta}_i(\p) \;=\; 0, \qquad
\sum_{i=1}^4 \v_i^0 \cdot \boldsymbol{\delta}_i(\p) \;=\; 0, \qquad
\sum_{i=1}^4 \v_i^0 \times \boldsymbol{\delta}_i(\p) \;=\; 0,
\end{equation}
which respectively eliminate translations, dilations, and rotations of $Q_{\p}$. Here and throughout, $\x \times \y := x_1 y_2 - x_2 y_1$ denotes the scalar cross product of $\x, \y \in \R^2$.

The four shape parameters $\p = (a, b, c, d)$ each have a simple geometric interpretation (Figure~\ref{fig:modes}):
\begin{itemize}
\item $a \mapsto Q_{(a, 0, 0, 0)}$ is an axis-aligned rectangle centered at the origin of dimensions $(1 - a) \times (1 + a)$.
\item $b \mapsto Q_{(0, b, 0, 0)}$ is an isosceles trapezoid with vertical axis of symmetry: the bottom edge has length $1 + b$ while the top edge has length $1 - b$.
\item $c \mapsto Q_{(0, 0, c, 0)}$ is the analogous isosceles trapezoid with horizontal axis of symmetry: the right edge has length $1 + c$ and the left edge has length $1 - c$.
\item $d \mapsto Q_{(0, 0, 0, d)}$ is a rhombus with diagonals along the lines $x_1 = \pm x_2$, of lengths $\sqrt{2}(1 - d)$ and $\sqrt{2}(1 + d)$.
\end{itemize}
The area of $Q_{\p}$ can be computed via the shoelace formula to be
\begin{equation}
\label{e:Area}
|Q_{\p}| \;=\; 1 \;-\; a^2 \;-\; d^2.
\end{equation}

\begin{figure}[t!]
\centering
\begin{tikzpicture}[scale=1.45, every node/.style={font=\small}]
  \tikzset{sq/.style={gray!55, dashed}, qd/.style={thick, fill=blue!8}}
  \begin{scope}[xshift=0cm]
    \draw[sq] (-0.5,-0.5) rectangle (0.5,0.5);
    \draw[qd] (-0.35,-0.65)--(0.35,-0.65)--(0.35,0.65)--(-0.35,0.65)--cycle;
    \node at (0,-1.05) {$a$: rectangle};
  \end{scope}
  \begin{scope}[xshift=2.45cm]
    \draw[sq] (-0.5,-0.5) rectangle (0.5,0.5);
    \draw[qd] (-0.65,-0.5)--(0.65,-0.5)--(0.35,0.5)--(-0.35,0.5)--cycle;
    \draw[red!75, densely dotted, thick] (0,-0.78)--(0,0.78);
    \node at (0,-1.05) {$b$: trapezoid};
  \end{scope}
  \begin{scope}[xshift=4.90cm]
    \draw[sq] (-0.5,-0.5) rectangle (0.5,0.5);
    \draw[qd] (-0.5,-0.35)--(0.5,-0.65)--(0.5,0.65)--(-0.5,0.35)--cycle;
    \draw[red!75, densely dotted, thick] (-0.78,0)--(0.78,0);
    \node at (0,-1.05) {$c$: trapezoid};
  \end{scope}
  \begin{scope}[xshift=7.35cm]
    \draw[sq] (-0.5,-0.5) rectangle (0.5,0.5);
    \draw[qd] (-0.35,-0.35)--(0.65,-0.65)--(0.35,0.35)--(-0.65,0.65)--cycle;
    \draw[red!75, densely dotted, thick] (-0.35,-0.35)--(0.35,0.35);
    \draw[red!75, densely dotted, thick] (0.65,-0.65)--(-0.65,0.65);
    \node at (0,-1.05) {$d$: rhombus};
  \end{scope}
\end{tikzpicture}
\caption{The four shape modes. Each panel shows $Q_{\p}$ (solid) when a single
parameter is set to $0.3$ and the others to $0$, against the unit square
$\square = Q_{\bzero}$ (dashed). The dotted lines mark the symmetry axes of the
trapezoids ($b$, $c$) and the perpendicular diagonals of the rhombus ($d$).}
\label{fig:modes}
\end{figure}

The quadrilateral $Q_{\p}$ (with our cyclic vertex ordering) is a non-degenerate convex quadrilateral if and only if 
the four cross products $c_i(\p) := (\v_{i+1} - \v_i) \times (\v_{i+2} - \v_i)$ are strictly positive; vanishing of any $c_i$ corresponds to $\v_i$ becoming collinear with its two neighbors, in which case $Q_\p$ degenerates to a triangle. A direct computation gives
\begin{equation}
\label{e:cross}
\begin{aligned}
c_1(\p) &= 1 - a^2 - d^2 \;+\; (b+c)(1-d) \;+\; a(b-c), \\
c_2(\p) &= 1 - a^2 - d^2 \;-\; (b-c)(1+d) \;-\; a(b+c), \\
c_3(\p) &= 1 - a^2 - d^2 \;-\; (b+c)(1-d) \;-\; a(b-c), \\
c_4(\p) &= 1 - a^2 - d^2 \;+\; (b-c)(1+d) \;+\; a(b+c).
\end{aligned}
\end{equation}
Note that $c_1 + c_3 = c_2 + c_4 = 2(1 - a^2 - d^2) = 2|Q_{\p}|$, recovering \eqref{e:Area}. The set $\{\p \colon c_i(\p) > 0 \text{ for all } i\}$ corresponds to convex, non-degenerate, quadrilaterals, though not uniquely: in generic position, a given similarity class of convex quadrilaterals has eight pre-images under $\p \mapsto Q_\p$, corresponding to the four choices of ``first vertex'' and two orientations. This eight-fold redundancy is the action of the dihedral group $D_4$ on the parameter space, made explicit in Section~\ref{s:RayleighRitz}. For the maximization problem, however, it is convenient to work with the closure, 
\[
\P \;:=\; \bigl\{ \p \in \mathbb R^4 \colon  c_i(\p) \geq 0 \text{ for } i = 1, 2, 3, 4 \bigr\}.
\]
Points on $\partial \P = \bigcup_i \{c_i = 0\}$ correspond to degenerate quadrilaterals where $Q_\p$ collapses to a triangle (the vertex $\v_i$ becomes collinear with its two neighbors). ($\P$ is unbounded because quadrilaterals with a very acute angle can remain convex but elongate without bound into thin shapes.)

We give a constructive proof that any convex planar quadrilateral $\widetilde Q$ (with vertices listed counter-clockwise) produces a parameter $\p \in \P$ such that $Q_{\p}$ is similar to $\widetilde Q$. 

\begin{theorem}
\label{t:surjective}
Let $\widetilde Q$ be a convex planar quadrilateral with vertices $\widetilde \v_1, \widetilde\v_2, \widetilde\v_3, \widetilde\v_4 \in \mathbb R^2$ listed in counter-clockwise order. Define:
\begin{itemize}
\item Centroid, $\bar{\widetilde\v} := \tfrac{1}{4}\sum_{i=1}^4 \widetilde\v_i$, and centered vertices, $\widetilde\w_i := \widetilde\v_i - \bar{\widetilde\v}$.
\item  Parameters 
$g := \sum_{i=1}^4 \widetilde\w_i \cdot \v_i^0$ and 
$h := \sum_{i=1}^4 \widetilde\w_i \times \v_i^0$.
\item Similarity parameters: $\lambda := 2\,(g^2 + h^2)^{-1/2} > 0$ and $\theta \in [0, 2\pi)$ defined by $(\cos\theta, \sin\theta) = \frac{(g, h)}{\sqrt{g^2 + h^2}}$.
\item Normalized vertices, $\v_i := \lambda\, R_\theta(\widetilde\w_i)$, and displacements, $\boldsymbol\delta_i := \v_i - \v_i^0$, for $i = 1, \ldots, 4$, where $R_\theta$ is counter-clockwise rotation by $\theta$.
\item Quadrilateral parameters: 
\begin{equation}\label{e:p-recovery}
\begin{aligned}
a &:= \tfrac{1}{2}\bigl(\boldsymbol\delta_{1,x} - \boldsymbol\delta_{2,x} - \boldsymbol\delta_{3,x} + \boldsymbol\delta_{4,x}\bigr), \quad &
b &:= \tfrac{1}{2}\bigl(-\boldsymbol\delta_{1,x} + \boldsymbol\delta_{2,x} - \boldsymbol\delta_{3,x} + \boldsymbol\delta_{4,x}\bigr), \\
c &:= \tfrac{1}{2}\bigl(\boldsymbol\delta_{1,y} - \boldsymbol\delta_{2,y} + \boldsymbol\delta_{3,y} - \boldsymbol\delta_{4,y}\bigr), &
d &:= \tfrac{1}{2}\bigl(\boldsymbol\delta_{1,y} - \boldsymbol\delta_{2,y} - \boldsymbol\delta_{3,y} + \boldsymbol\delta_{4,y}\bigr).
\end{aligned}
\end{equation}
\end{itemize}
Then $g^2 + h^2 > 0$ (so the construction is well-defined), $\p := (a, b, c, d) \in \operatorname{int}\P$ (so $Q_\p$ is nondegenerate), and $\widetilde\v_i = S^{-1}(\v_i(\p))$ for the similarity transform $S(\x) := \lambda R_\theta(\x - \bar{\widetilde\v})$, with inverse $S^{-1}(\x) = \lambda^{-1} R_{-\theta}(\x) + \bar{\widetilde\v}$.
\end{theorem}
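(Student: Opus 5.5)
The plan is to reduce the statement to linear algebra on the $8$-dimensional space of vertex displacements, and to treat $g^2+h^2>0$ and the interior condition as the only genuinely geometric inputs. Identify $\R^2$ with $\mathbb{C}$. Then $\x\cdot\y = \operatorname{Re}(\x\bar\y)$ and $\x\times\y = \operatorname{Im}(\bar\x\y)$, so $g + i h$ (up to a harmless conjugation and sign convention) equals $\sum_i \widetilde\w_i\,\overline{\v_i^0}$. The reference vertices satisfy $\v_k^0 = \tfrac{1}{\sqrt2}e^{-3\pi i/4} i^{k-1}$. Hence $g+ih$ is, up to a unimodular constant, the discrete Fourier coefficient $\sum_k \widetilde\w_k (-i)^{k-1} = (\widetilde\w_1-\widetilde\w_3) - i(\widetilde\w_2-\widetilde\w_4)$. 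If it vanished, the diagonals $\d_{13}:=\widetilde\w_3-\widetilde\w_1$ and $\d_{24}:=\widetilde\w_4-\widetilde\w_2$ would satisfy $\d_{13} = i\,\d_{24}$, i.e. one diagonal would be the other rotated by $+\pi/2$. But for a convex counter-clockwise quadrilateral, $\d_{13}\times\d_{24} = 2|\widetilde Q|>0$, whereas $\d_{13}=i\d_{24}$ forces $\d_{13}\times\d_{24} = -|\d_{24}|^2<0$. This contradiction gives $g^2+h^2>0$. I expect fixing the sign conventions here to be the main (if minor) obstacle: one must check that the orientation of the rotation, $+\pi/2$ versus $-\pi/2$, is the one contradicting counter-clockwise orientation. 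I would verify it on the square itself, where $g+ih\neq0$ trivially.

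Next I will verify that the normalized displacements satisfy the three conditions in \eqref{e:normalization}. The translation condition holds because the $\widetilde\w_i$ are centered and $\sum\v_i^0=0$. The choice of $\theta$ rotates $(g,h)$ onto the positive real axis. Because $R_\theta$ preserves dot and cross products up to the rotation of the pairing, $\sum_i \v_i\cdot\v_i^0 = \lambda\sqrt{g^2+h^2}=2$ and $\sum_i \v_i\times\v_i^0 = 0$. Since $\sum_i|\v_i^0|^2 = 2$ and $\sum_i \v_i^0\times\v_i^0=0$, subtracting gives $\sum\v_i^0\cdot\boldsymbol\delta_i=0$ and $\sum\v_i^0\times\boldsymbol\delta_i=0$. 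The factor $2$ in $\lambda$ is tuned for exactly this.

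The remaining linear-algebra step is that the linear map $\p\mapsto(\boldsymbol\delta_i(\p))_i$ from $\R^4$ to $\R^8$ is injective with image exactly the $4$-dimensional subspace $V$ cut out by the four independent scalar equations \eqref{e:normalization}. Injectivity follows by checking that \eqref{e:p-recovery}, applied to $\boldsymbol\delta(\p)$, returns $\p$: each formula picks out one parameter from the explicit displacement formulas, a finite check. The image lies in $V$ by the paper's remark, and a dimension count gives equality. Hence the normalized $\boldsymbol\delta_i$ equal $\boldsymbol\delta_i(\p)$ for the $\p$ defined by \eqref{e:p-recovery}, so $\v_i = \v_i(\p)$ and $\widetilde\v_i = S^{-1}(\v_i(\p))$ by construction.

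Finally, $S$ is an orientation-preserving similarity with ratio $\lambda>0$, so each cross product satisfies $c_i(\p) = \lambda^2\,(\widetilde\v_{i+1}-\widetilde\v_i)\times(\widetilde\v_{i+2}-\widetilde\v_i)$. The latter is strictly positive because $\widetilde Q$ is a convex, nondegenerate, counter-clockwise quadrilateral. Hence $\p\in\operatorname{int}\P$.
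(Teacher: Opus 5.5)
Your proposal is correct and follows the paper's proof essentially step for step: the diagonal argument for $g^2+h^2>0$ (your condition $\d_{13}=i\,\d_{24}$ is exactly the paper's kernel condition $\d_1=R_{\pi/2}\d_2$, which the paper gets from an explicit $2\times 4$ coefficient matrix rather than a discrete Fourier coefficient), the check of \eqref{e:normalization} via $\lambda\sqrt{g^2+h^2}=2$, the left-inverse-plus-dimension-count identification of the displacements with $\Phi(\p)$, and positivity of the $c_i(\p)$ under the orientation-preserving similarity $S$. The only blemishes are cosmetic and do not affect the argument. The constant relating $\sum_i \widetilde{\w}_i\overline{\v_i^0}=g-ih$ to $\sum_k \widetilde{\w}_k(-i)^{k-1}$ has modulus $1/\sqrt{2}$, not $1$. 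Also, it is $g-ih$, not $(g,h)$, that multiplication by $e^{i\theta}$ sends to the positive real axis.
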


\begin{proof}
We first show that $g^2 + h^2 > 0$. 
Let $\d_1 := \widetilde\w_3 - \widetilde\w_1$ and $\d_2 := \widetilde\w_4 - \widetilde\w_2$ denote the two diagonals of $\widetilde Q$. Using the vertices of the square, $\v_i^0$, we obtain 
\begin{equation*}
2g = \d_1\!\cdot(1, 1) + \d_2\!\cdot(-1, 1), \qquad 
2h = \d_1\!\cdot(1, -1) + \d_2\!\cdot(1, 1).
\end{equation*}
The linear map $(\d_1, \d_2) \mapsto (g, h)$ has $2 \times 4$ coefficient matrix $\tfrac{1}{2}\bigl(\begin{smallmatrix} 1 & 1 & -1 & 1 \\ 1 & -1 & 1 & 1 \end{smallmatrix}\bigr)$, of rank $2$, with kernel $\bigl\{(\d_1, \d_2) \in \mathbb R^4 \colon \d_1 = R_{\pi/2}\, \d_2\bigr\}$, where $R_{\pi/2}$ denotes the counter-clockwise rotation by $\tfrac{\pi}{2}$. In particular, $g = h = 0$ implies $\d_1 = R_{\pi/2}\, \d_2$, which gives $\d_1 \times \d_2 = (R_{\pi/2}\, \d_2) \times \d_2 = -|\d_2|^2 \le 0$.
However, for any counter-clockwise convex quadrilateral, the diagonals satisfy
$
\d_1 \times \d_2 \;=\; 2 \left|\widetilde Q\right| \;>\; 0,
$
since both diagonals lie inside $\widetilde Q$ and cross with positive orientation. 
Hence $g^2 + h^2 > 0$ as claimed.

Using the similarity transform, we have  $\v_i =  S(\widetilde\v_i)$. Using the identity $R_\theta(\x) \cdot \y = \x \cdot R_{-\theta}(\y)$, we compute
\begin{align*}
\sum_i \v_i \cdot \v_i^0 &= \lambda \sum_i R_\theta(\widetilde\w_i) \cdot \v_i^0 = \lambda(g\cos\theta + h\sin\theta) = \lambda\, \sqrt{g^2 + h^2} = 2, \\
\sum_i \v_i \times \v_i^0 &= \lambda(-g\sin\theta + h\cos\theta) = 0.
\end{align*}
Combined with $\sum_i \v_i = 0$ (immediate from $\sum_i \widetilde\w_i = 0$) and the identities $\sum_i \v_i^0 = 0$ and $\sum_i |\v_i^0|^2 = 2$, the displacements $\boldsymbol\delta_i = \v_i - \v_i^0$ satisfy the normalization conditions \eqref{e:normalization}.

To obtain \eqref{e:p-recovery}, consider the linear map $\Phi \colon \mathbb R^4 \to \mathbb R^8$, $\p \mapsto \left(\boldsymbol\delta_1(\p), \ldots, \boldsymbol\delta_4(\p) \right)$, given by the displacement formulas above. Substituting these formulas into the right-hand sides in \eqref{e:p-recovery} returns exactly $(a, b, c, d)$, so \eqref{e:p-recovery} defines a left inverse of $\Phi$; in particular, $\Phi$ is injective. The image of $\Phi$ lies in the subspace $W \subset \mathbb R^8$ defined by \eqref{e:normalization}; moreover, $\dim W = 8 - 4 = 4$, since the four scalar conditions are linearly independent. An injective linear map between four-dimensional spaces is an isomorphism, so $\Phi$ maps $\mathbb R^4$ isomorphically onto $W$ with inverse \eqref{e:p-recovery}. Since the displacements $\boldsymbol\delta_i$ constructed from $\widetilde Q$ satisfy \eqref{e:normalization}, they equal $\Phi(\p)$ for $\p = (a, b, c, d)$ as in \eqref{e:p-recovery}. Thus $Q_{\p} = S\left(\widetilde Q\right)$ is a convex quadrilateral with vertices in counter-clockwise order, so $c_i(\p) > 0$ for all $i$, i.e.\ $\p \in \operatorname{int}\P$.
\end{proof}

\subsection*{Reduction to a bounded set}
We  use Kr\"oger's inequality to restrict attention to a bounded subset of $\P$. For any bounded planar convex domain $\Omega$, Kr\"oger's inequality states
\[
\diam(\Omega)^2\, \mu_1(\Omega) \;\le\; 4\, j_{0,1}^2,
\]
where $j_{0,1} \approx 2.4048$ is the first positive zero of the Bessel function $J_0$~\cite{Kroger1999} (see also~\cite{HenrotMichetti2024}, where this sharp bound for planar convex domains is stated explicitly). Let $Q_{\p}$ be a quadrilateral with $|Q_{\p}|\,\mu_1(Q_{\p}) \geq \pi^2$. Dividing by $|Q_{\p}| > 0$ and then using the area bound $|Q_{\p}| \le 1$ of \eqref{e:Area},
gives
$\mu_1(Q_{\p}) \;\geq\; \frac{\pi^2}{|Q_{\p}|} \;\geq\; \pi^2 $.
Kr\"oger's inequality therefore bounds the diameter: 
\begin{equation}
\label{e:KrogerCap}
\diam(Q_{\p})^2 \;\leq\; \frac{4\, j_{0,1}^2}{\mu_1(Q_{\p})} \;\leq\; T \;:=\; \frac{4\, j_{0,1}^2}{\pi^2} \;\approx\; 2.344.
\end{equation}
In our parameterization, the squared diameter $\diam(Q_{\p})^2$ is the maximum of six explicit polynomials in $\p$, the four squared edge-lengths and the two squared diagonals,
\begin{equation}
\label{e:vertex-distances}
\begin{aligned}
|\v_1 - \v_2|^2 &= (1+b-a)^2 + (c+d)^2, \qquad &
|\v_2 - \v_3|^2 &= (1+a+c)^2 + (b+d)^2, \\
|\v_3 - \v_4|^2 &= (1-a-b)^2 + (c-d)^2, &
|\v_1 - \v_4|^2 &= (1+a-c)^2 + (b-d)^2, \\
|\v_1 - \v_3|^2 &= 2 a^2 + 2(1-d)^2, &
|\v_2 - \v_4|^2 &= 2 a^2 + 2(1+d)^2.
\end{aligned}
\end{equation}
We then define
\[
\P_{\mathrm{K}} \;:=\; \bigl\{ \p \in \P \colon |\v_i(\p) - \v_j(\p) |^2 \leq T, \; \forall
i \ne j \bigr\}.
\]
By \eqref{e:KrogerCap}, any candidate maximizer of $|Q_{\p}|\mu_1(Q_{\p})$ lies in $\P_{\mathrm{K}}$. The inequalities \eqref{e:cross} and \eqref{e:vertex-distances} make $\P_{\mathrm{K}}$ a set defined by ten quadratic inequalities. 

\begin{lemma}[Boundedness of $\P_{\mathrm{K}}$]
\label{l:PK-bounded}
Define the ellipsoidal set
\begin{equation}\label{e:ellipsoid}
P_\mathrm{E} \;:=\; \left\{ \p \in \mathbb R^4 \colon
\bigl(1 + |a|\bigr)^2 \;+\; b^2 \;+\; c^2 \;+\; d^2 \;\leq\; T \right\}
\end{equation}
and the axis-aligned box
\begin{equation}
\label{e:cube}
P_\mathrm{C} \;:=\; \left\{ \p \in \mathbb R^4 \colon  |a| \leq \alpha,\;\; |b|,\, |c| \leq \beta,\;\; |d| \leq \delta \right\},
\end{equation}
with
$\alpha = \bigl(\tfrac{T}{2} - 1\bigr)^{1/2}\approx 0.415$,
$\beta = \tfrac{1}{2}\bigl(\sqrt T - 2 + \sqrt{T + 4\sqrt T - 4}\bigr) \approx 0.822$, and
$\delta = \bigl(\tfrac{T}{2}\bigr)^{1/2} - 1  \approx 0.083$.
Then $\P_{\mathrm{K}} \subset P_{\mathrm{E}}$ and $\P_{\mathrm{K}} \subset P_{\mathrm{C}}$. In particular, $\P_{\mathrm{K}}$ is compact.
\end{lemma}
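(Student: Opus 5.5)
The plan is to prove the two inclusions separately, using only the explicit quadratic constraints \eqref{e:cross} and \eqref{e:vertex-distances}. Compactness then follows at once: $\P_{\mathrm K}$ is closed, being cut out by ten non-strict polynomial inequalities, and it is bounded because it lies in $P_{\mathrm C}$.

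\emph{Ellipsoid.} Pair opposite edges in \eqref{e:vertex-distances}. A direct expansion gives
\[
|\v_1-\v_2|^2+|\v_3-\v_4|^2 = 2(1-a)^2+2b^2+2c^2+2d^2,
\]
\[
|\v_2-\v_3|^2+|\v_1-\v_4|^2 = 2(1+a)^2+2b^2+2c^2+2d^2.
\]
Each edge is at most $\sqrt T$ on $\P_{\mathrm K}$, so each of these sums is at most $2T$. Take the pair with $(1\pm a)^2=(1+|a|)^2$ and divide by $2$. This yields $(1+|a|)^2+b^2+c^2+d^2\le T$, i.e.\ $\P_{\mathrm K}\subset P_{\mathrm E}$.

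\emph{Box: the $a$ and $d$ bounds.} Use the diagonals. We have $\max\bigl(|\v_1-\v_3|^2,|\v_2-\v_4|^2\bigr)=2a^2+2(1+|d|)^2\le T$. Since $(1+|d|)^2\ge 1$, this gives $a^2\le T/2-1$, so $|a|\le\alpha$. Since $a^2\ge0$, it also gives $(1+|d|)^2\le T/2$, so $|d|\le\delta$.

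\emph{Box: the $b,c$ bound (main obstacle).} The ellipsoid alone only gives $|b|\le\sqrt{T-1}\approx1.16$. Reaching $\beta\approx0.822$ requires the convexity constraints $c_i\ge0$ together with the edge bounds. First I would reduce by symmetry. The $D_4$ action (reflections and quarter-turns of the square) permutes the vertices, so it preserves $\P_{\mathrm K}$, and it acts on $(b,c)$ by swaps and sign changes. Hence it suffices to bound $b\ge0$.

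Next I would relax to a low-dimensional problem. The bottom edge gives $1+b-a\le\sqrt{|\v_1-\v_2|^2}\le s:=\sqrt T$. A convexity inequality, say $c_3\ge0$ or $c_2\ge0$, couples $b$ with $a$, $c$, $d$. For instance, $c_3\ge0$ gives $(b+c)(1-d)+a(b-c)\le 1-a^2-d^2$. I would then eliminate $c$ using the side edges and $d$ using $|d|\le\delta$, or argue that the extremum occurs at $c=d=0$. This leaves a two-variable problem in $(a,b)$: maximize $b$ subject to $a\ge 1+b-s$ and a convexity inequality of the form $b(1+a)\le 1-a^2$. That constraint is $c_3\ge0$ at $c=d=0$; there the top edge has length $1-a-b$, which must stay nonnegative.

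Checking the target, $\beta$ is the positive root of
\[
\beta^2-(s-2)\beta-2(s-1)=0,
\]
with discriminant $(s-2)^2+8(s-1)=s^2+4s-4$, consistent with the stated formula. This strongly suggests the extremal configuration is where two constraints are simultaneously active: a side-edge length constraint equal to $T$, and a degeneracy $c_i=0$ (a vertex collinear with its neighbours). So I would first try the pair formed by one side-edge constraint, such as $|\v_2-\v_3|^2=(1+a+c)^2+(b+d)^2\le T$, and the corresponding $c_i\ge0$. I would solve for the intersection, then verify by a Lagrange/KKT check that no other active set yields larger $b$.

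If a clean elimination of $c$ and $d$ fails, the fallback is a direct bound. I would write $b$ as a combination of the constraint polynomials plus a sum of squares, giving $\beta-b\ge0$ on $\P_{\mathrm K}$, with the multipliers read off from the KKT point found above.
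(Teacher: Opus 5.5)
There is a genuine gap in the bound $|b|,|c|\le\beta$. Your ellipsoid inclusion, the bounds on $|a|$ and $|d|$ from the diagonals, the symmetry reduction to $b\ge0$, and the compactness argument are all correct and match the paper. But $|b|,|c|\le\beta$ is the only nontrivial part of $\P_{\mathrm K}\subset P_{\mathrm C}$, and what you give for it is a search strategy, not a proof. The reduction to $c=d=0$ is asserted without justification. It is in fact false for the relaxation you describe: $c_3=1-a^2-d^2-b(1+a-d)-c(1-a-d)$ is increased by taking $c<0$, so $c_3\ge0$ alone allows $b>1-a$. Neither the KKT enumeration over active sets of a nonconvex four-variable problem nor the sum-of-squares certificate is carried out.

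The missing idea is that convexity is not needed at all, so your premise is false. From the ellipsoid step you already have $(1+a)^2+b^2\le T$. The bottom edge gives $1+b-a\le s$, i.e.\ $1+a\ge 2+b-s$, and the right side is positive because $s<2$ and $b\ge0$. Squaring and combining gives
\[
(2+b-s)^2+b^2\;\le\;(1+a)^2+b^2\;\le\;T=s^2 .
\]
This expands to exactly $b^2-(s-2)b-2(s-1)\le0$, the quadratic you identified, so $b\le\beta$; the bound on $c$ follows by symmetry. That is the paper's entire argument.

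Your guess about the extremal configuration is also wrong. Equality in this chain forces $c=d=0$ and $a=1+\beta-s\approx0.291$. Then $1-a-\beta\approx-0.114<0$, so $c_3=(1+a)(1-a-b)<0$ there. Thus $\beta$ is attained only at a self-intersecting configuration outside $\P_{\mathrm K}$, and it is not sharp on $\P_{\mathrm K}$. It would not appear as a KKT point with some $c_i$ active, so your planned verification step would not reproduce $\beta$.
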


\begin{proof}
We first show $\P_{\mathrm{K}} \subset P_{\mathrm{E}}$. Adding the two pairs of opposite-edge constraints from \eqref{e:vertex-distances} gives
\begin{align*}
|\v_1 - \v_2|^2 + |\v_3 - \v_4|^2 &= 2(1-a)^2 + 2(b^2 + c^2 + d^2), \\
|\v_2 - \v_3|^2 + |\v_1 - \v_4|^2 &= 2(1+a)^2 + 2(b^2 + c^2 + d^2).
\end{align*}
For $\p \in \P_{\mathrm{K}}$, each term on the left is at most $T$, so
\[
(1-a)^2 + b^2 + c^2 + d^2 \leq T \quad\text{and}\quad (1+a)^2 + b^2 + c^2 + d^2 \leq T.
\]
Taking the maximum of the two left-hand sides yields the defining inequality of $P_{\mathrm{E}}$, since $\max((1-a)^2, (1+a)^2) = (1+|a|)^2$.

We next show $\P_{\mathrm{K}} \subset P_{\mathrm{C}}$. The bounds on $|a|$ and $|d|$ follow from the two diagonal constraints in \eqref{e:vertex-distances}, which involve only $a$ and $d$: they give $a^2 + (1 \mp d)^2 \le \tfrac{T}{2}$, hence $a^2 + (1+|d|)^2 \leq \tfrac{T}{2}$. Since $(1+|d|)^2 \ge 1$, we obtain $|a| \le \bigl(\tfrac{T}{2} - 1\bigr)^{1/2} = \alpha$; since $a^2 \ge 0$, we obtain $1 + |d| \le \bigl(\tfrac{T}{2}\bigr)^{1/2}$, i.e.\ $|d| \le \bigl(\tfrac{T}{2}\bigr)^{1/2} - 1 = \delta$.

We next bound $|b|$. Set $s := \sqrt T \approx 1.531$. We first reduce to the case $b \ge 0$: the substitution $(a, b, c, d) \mapsto (a, -b, c, -d)$ permutes the ten inequalities defining $\P_{\mathrm{K}}$ (as may be checked from \eqref{e:cross} and \eqref{e:vertex-distances}) and flips the sign of $b$, so $\sup_{\P_{\mathrm{K}}} |b| = \sup_{\P_{\mathrm{K}}} b$. Suppose then that $b \ge 0$. The edge constraint $|\v_1 - \v_2|^2 = (1 + b - a)^2 + (c+d)^2 \le T$ gives $0 < 1 + b - a \le s$, where positivity uses $|a| \le \alpha < 1$; equivalently, $1 + a \ge 2 + b - s$, and the right-hand side is positive since $s < 2 \le 2 + b$. From \eqref{e:ellipsoid}, $(1 + a)^2 + b^2 \le (1+|a|)^2 + b^2 \le T$. Combining,
\[
(2 + b - s)^2 + b^2 \;\le\; (1+a)^2 + b^2 \;\le\; T,
\]
a quadratic inequality in $b$ whose larger root is $\beta = \tfrac{1}{2}\bigl((s - 2) + \sqrt{s^2 + 4s - 4}\bigr)$; hence $b \le \beta$. The bound on $|c|$ follows from the substitution $(a, b, c, d) \mapsto (-a, -c, -b, d)$, which likewise permutes the defining inequalities of $\P_{\mathrm{K}}$ and exchanges $b$ and $c$ up to sign, so $\sup_{\P_{\mathrm{K}}} |b| = \sup_{\P_{\mathrm{K}}} |c|$. (Both substitutions are elements of the $D_4$-action on the parameter space introduced in Section~\ref{s:RayleighRitz}, induced by the reflection symmetries of the square.)

Finally, $\P_{\mathrm{K}}$ is closed, being defined by non-strict polynomial inequalities, and bounded, since $\P_{\mathrm{K}} \subset P_{\mathrm{C}}$; hence it is compact.
\end{proof}

The two supersets $P_{\mathrm{C}}$ and $P_{\mathrm{E}}$ are not nested: each contains points outside the other. The box, however, is the smaller of the two, with $\mathrm{vol}(P_{\mathrm{C}}) = 16\,\alpha \beta^2 \delta \approx 0.370$. The cross-section of $P_{\mathrm{E}}$ at fixed $a$ is a ball in $(b, c, d)$ of radius $\sqrt{T - (1+|a|)^2}$, so $\mathrm{vol}(P_{\mathrm{E}}) = \tfrac{8\pi}{3} \int_1^{\sqrt T} (T - u^2)^{3/2}\, du \approx 3.027$, roughly eight times larger. The box $P_{\mathrm{C}}$ is used in Section~\ref{s:GlobalMax} to initialize the box-covering algorithm: the certified cover of $\P_{\mathrm{K}}$ begins from a uniform subdivision of $P_{\mathrm{C}}$, boxes lying entirely outside $\P_{\mathrm{K}}$ being discarded.

\section{Rayleigh--Ritz approximation and strategy of proof}
\label{s:RayleighRitz}

In this section we set up the framework for the proof of
Theorem~\ref{t:Main}. We introduce a smooth auxiliary function $f(\p)$ whose minimization implies the original maximization and develop a
Rayleigh--Ritz lower bound $f_N(\p) \le f(\p)$ from a finite-dimensional
trigonometric subspace. We then show that the reflection symmetries of the square induce a dihedral $D_4$-action on the parameter space under which $f$ and $f_N$ are invariant; at $\p = \bzero$, this invariance forces the gradients of $f$ and $f_N$ to vanish and their Hessians to be diagonal, with only three independent entries. We conclude this section with the proof strategy.

\subsection*{Formulation of the quadrilateral maximization problem}
Using the homothety property of Laplace--Neumann eigenvalues ($\mu_k(tQ) = t^{-2} \mu_k(Q)$, $t>0$), 
our quadrilateral maximization problem can be stated as   
\begin{equation}
\label{e:maxEig1}
\max_{Q \in \mathcal Q} \,\, |Q| \mu_1(Q). 
\end{equation}
The optimal value is finite, bounded above by the Szeg\H{o}--Weinberger inequality~\cite{Szego1954,Weinberger1956}. The objective is invariant under similarity transformations, so by Theorem~\ref{t:surjective} the supremum in \eqref{e:maxEig1} equals the supremum of $|Q_\p| \mu_1(Q_\p)$ over $\p \in \P$; since the square attains the value $\pi^2$, the Kr\"oger bound \eqref{e:KrogerCap} restricts it to the compact set $\P_{\mathrm{K}}$ (Lemma~\ref{l:PK-bounded}). 

We introduce the auxiliary quadrilateral minimization problem 
\begin{equation}
\label{e:maxEig2}
\min_{Q \in \mathcal Q} \,\, F(Q), 
\qquad 
F(Q) := \frac{1}{2 |Q|} \left( \frac{1}{\mu_1(Q)} + \frac{1}{\mu_2(Q)} \right). 
\end{equation}
Note that 
$F(Q) \leq \frac{1}{|Q| \mu_1(Q)}$ 
for all $Q \in \mathcal Q$. For $\p \in \P$, we abbreviate $f(\p) := F(Q_{\p})$.
Both objectives $|Q|\mu_1(Q)$ and $F(Q)$ are scale-invariant under $Q \mapsto t Q$ ($t > 0$) by homothety; it is therefore no restriction that the parameterization $\p \mapsto Q_\p$ fixes the scale via the constraint $\sum_i \v_i^0 \cdot \boldsymbol\delta_i(\p) = 0$.
For a planar domain $\Omega \subset \mathbb R^2$ with Lipschitz boundary, 
the sum of the reciprocal Laplace--Neumann eigenvalues has the variational formulation 
\begin{equation}
\label{e:VarForm}    
\frac{1}{\mu_1(\Omega)}+ \frac{1}{\mu_2(\Omega)} 
\,\, = \, \, 
\max_{\substack{u,v \in H^1_\diamond(\Omega) \times H^1_\diamond(\Omega) \\    \int \nabla u \cdot \nabla v\, dx =0}}  E_{\Omega}(u,v), 
\qquad \qquad 
E_{\Omega} (u,v) := \frac{\int_\Omega u^2}{\int_\Omega |\nabla u|^2}+ \frac{\int_\Omega v^2}{\int_\Omega |\nabla v|^2}. 
\end{equation}
Here, $ H^1_\diamond(\Omega)
=
\left\{
u \in H^1(\Omega)
\colon
\int_\Omega u \, dx = 0
\right\}
$
denotes $H^1$ functions with zero mean. The identity \eqref{e:VarForm} is a sum-version of the Courant--Fischer characterization~\cite[Chapter III]{PolyaSchiffer1954}.
Before we use  \eqref{e:maxEig2} to study \eqref{e:maxEig1}, we  recall properties of the Laplace--Neumann eigenvalue problem on the square. 

\subsection*{Laplace--Neumann eigenvalues of the square}
The Laplace--Neumann eigenpairs of the square $\square = Q_\bzero = [-\tfrac{1}{2}, \tfrac{1}{2}]^2$ are given by
\begin{align*}
\mu_{m,n} &= \pi^2(m^2 + n^2), \\
u_{m,n}(x_1, x_2) &= \cos\bigl(m\pi(x_1 + \tfrac{1}{2})\bigr)\, \cos\bigl(n\pi(x_2 + \tfrac{1}{2})\bigr), 
\qquad m, n \in \mathbb{N}_{\ge 0}.
\end{align*}
The lowest eigenvalue is $\mu_0 = 0$, and the next two satisfy $\mu_1(\square) = \mu_2(\square) = \pi^2$, an eigenvalue of multiplicity two corresponding to the two-dimensional eigenspace spanned by $u_{1,0}$ and $u_{0,1}$. It follows that $f(\bzero) = F(Q_\bzero) = \tfrac{1}{|\square|\, \mu_1(\square)} = \tfrac{1}{\pi^2}$. Combined with the inequality $F(Q) \le \frac{1}{|Q|\, \mu_1(Q)}$ for all $Q \in \calQ$, this shows that if the square $\square$ is a minimizer for \eqref{e:maxEig2}, then it is a maximizer for \eqref{e:maxEig1}. 

\subsection*{Rayleigh--Ritz approximation}
The variational formulation in \eqref{e:VarForm} expresses $\frac{1}{\mu_1} + \frac{1}{\mu_2}$ as a maximum over admissible test pairs $(u,v) \in H^1_\diamond(Q) \times H^1_\diamond(Q)$. 
Following the classical Rayleigh--Ritz method, restricting the maximum to any chosen \emph{finite-dimensional} subspace 
$V \subset H^1_\diamond(Q)$ 
yields a computable lower bound.
Thus, for $V \subset H^1_\diamond(Q)$, we denote 
\[
F^V(Q) 
\;:=\; 
\frac{1}{2|Q|}  \, 
\max_{\substack{u,v \in V \\ \int \nabla u \cdot \nabla v\, dx = 0}}\!\! E_{Q} (u,v)
\]
and observe that $F^V(Q) \le F(Q)$ for every $Q \in \mathcal Q$.

Choose a basis $\{\varphi_i\}_{i = 1}^k$ of $V$ and construct the \emph{mass} and \emph{stiffness matrices},  
\[
(M^V)_{ij} \;=\; \int_Q \varphi_i  \varphi_j \, dx
\qquad \textrm{and} \qquad 
(K^V)_{ij} \;=\; \int_Q \nabla \varphi_i \cdot \nabla \varphi_j \, dx, \qquad i,j =1,\ldots,k.  
\]
Let $\lambda_1 \le \lambda_2 \le \cdots \le \lambda_k$ denote the eigenvalues of the finite-dimensional generalized eigenvalue problem $K^V u = \lambda M^V u$.
For each integer $N \ge 1$, define the trigonometric subspace
\[
\widetilde V^N
\;:=\;
\mathrm{span}\Bigl\{\, \cos\bigl(m\pi(x_1 + \tfrac{1}{2})\bigr) \cos\bigl(n\pi(x_2 + \tfrac{1}{2})\bigr) \colon (m,n) \in \mathbb{Z}_{\ge 0}^2,\ 0 < m+n \le N\,\Bigr\} 
\;\subset\; H^1(\mathbb{R}^2).
\]
Observe that $\dim \widetilde V^N = \tfrac{N(N+3)}{2}$, and 
$\widetilde V^1 = \mathrm{span}\{\cos(\pi(x_1+\tfrac{1}{2})),\, \cos(\pi(x_2+\tfrac{1}{2}))\}$ 
is the two-dimensional subspace spanned by the lowest nontrivial Neumann eigenfunctions of the unit square. The basis functions of $\widetilde V^N$ are defined in the ambient plane and, for each quadrilateral $Q_{\p}$, are restricted to $Q_{\p}$ to define a $\p$-dependent test subspace
\[
V^N_{\p} 
\;:=\; 
\mathrm{span}\Bigl\{ \,\varphi|_{Q_{\p}} - \tfrac{1}{|Q_{\p}|}\!\textstyle{\int_{Q_{\p}}}\!\varphi\, dx \colon  \varphi \in \widetilde V^N\,\Bigr\} 
\;\subset\; H^1_\diamond(Q_{\p}).
\]
We abbreviate $f_N(\p) := F^{V^N_{\p}}(Q_{\p})$.

From the definitions above, we have
\begin{equation}
\label{e:KeyInequality}
\begin{aligned}
& f_N(\p) \;\le\; F(Q_{\p}) \;\le\; \frac{1}{|Q_{\p}|\, \mu_1(Q_{\p})}, \qquad \qquad  \forall \p \in \P
\quad \textrm{and} \quad  \\ 
& f_N(\bzero) \;=\; 
F(Q_{\bzero}) \;=\; 
\frac{1}{|Q_{\bzero}|\, \mu_1(Q_{\bzero})} \;=\; 
\frac{1}{\pi^2}.
\end{aligned}
\end{equation}
The first inequality is the Rayleigh--Ritz bound applied to $V^N_{\p} \subset H^1_\diamond(Q_{\p})$, and the second follows from $F(Q) \le \frac{1}{|Q|\mu_1(Q)}$, $\forall Q \in \calQ$. The equality at $\p = \bzero$ holds because $\widetilde V^N$ contains the two nontrivial first Neumann eigenfunctions of the unit square, which at $\p = \bzero$ are $L^2$-orthogonal, $H^1$-orthogonal, and have zero mean, so they realize the maximum in \eqref{e:VarForm}.

A brief remark on smoothness of $f$ and $f_N$. The proof of Theorem~\ref{t:Main} works entirely with the Rayleigh--Ritz approximant $f_N$, not with $f$ itself: the chain \eqref{e:KeyInequality} shows that $f_N(\p) > \tfrac{1}{\pi^2}$ already implies $|Q_\p|\mu_1(Q_\p) < \pi^2$, with no derivative of $f$ entering the argument. Smoothness of $f_N$ follows from finite-dimensional perturbation theory. By the integral representation of Section~\ref{app:integral-rep}, the matrices $K(\p), M(\p)$ are real-analytic functions of $\p$ on a \emph{fixed} reference square. Moreover, elementary symmetric functions of a separated cluster of generalized eigenvalues of a smoothly-varying definite matrix pencil are smooth in the parameter~\cite[Ch.~VII]{Kato1995},~\cite[\S 6.3]{StewartSun1990}. It follows that $f_N$ is real-analytic wherever the gap between the eigenvalue cluster $\{\lambda_1,\lambda_2\}$ and the higher eigenvalues is preserved (see Lemma~\ref{l:f2-smooth}). The same cluster argument applies to $f$ itself: the Neumann form pulled back to the fixed square by the map $\Phi_\p$ of Section~\ref{app:pullback} is a real-analytic family of closed forms in the sense of Kato~\cite[Ch.~VII]{Kato1995}, whose bottom nontrivial cluster $\{\mu_1, \mu_2\}$ remains isolated near the square, so $f$ is also smooth near $\bzero$, a fact used only in the symmetry discussion below.

\subsection*{Symmetry properties of the parameterization \texorpdfstring{$\p \mapsto Q_\p$}{p -> Qp}}
We now explain why the parameterization $\p \mapsto Q_\p$ enjoys symmetries that simplify our calculations of both the auxiliary function $f$ and its Rayleigh--Ritz approximant $f_N$. The unit square $\square = Q_\bzero$ is invariant under the dihedral group $D_4$ generated by the two axis-aligned reflections $\sigma_h(x_1, x_2) = (-x_1, x_2)$ and $\sigma_v(x_1, x_2) = (x_1, -x_2)$ together with the diagonal reflection $\rho(x_1, x_2) = (x_2, x_1)$. Each of these symmetries maps $Q_{\p}$ to a congruent quadrilateral; relabeling its vertices counter-clockwise and recovering the parameters via \eqref{e:p-recovery} yields a new parameter point, the resulting map being a linear involution:
\begin{align*}
\sigma_h^* &\colon (a, b, c, d) \mapsto (a,\, b,\, -c,\, -d), \\
\sigma_v^* &\colon (a, b, c, d) \mapsto (a,\, -b,\, c,\, -d), \\
\rho^* &\colon (a, b, c, d) \mapsto (-a,\, -c,\, -b,\, d).
\end{align*}
Each of the three reflections reverses cyclic order, so a relabeling convention is needed; we use the same rule for all three: relabel the image vertices counter-clockwise, starting from the image vertex that occupies the bottom-left corner at $\p = \bzero$. This gives the vertex sequences
\[
\bigl(\sigma_h(\v_2), \sigma_h(\v_1), \sigma_h(\v_4), \sigma_h(\v_3)\bigr), \quad
\bigl(\sigma_v(\v_4), \sigma_v(\v_3), \sigma_v(\v_2), \sigma_v(\v_1)\bigr), \quad
\bigl(\rho(\v_1), \rho(\v_4), \rho(\v_3), \rho(\v_2)\bigr),
\]
from which \eqref{e:p-recovery} produces the three formulas above. The sign patterns can also be read off Figure~\ref{fig:modes}. With this convention, $Q_{\sigma^*\p} = \sigma(Q_\p)$ \emph{as point sets}: the relabeling matches corners at $\p = \bzero$, so orthogonality of $\sigma$ shows that the relabeled displacements of $\sigma(Q_\p)$ satisfy the normalization \eqref{e:normalization}, and \eqref{e:p-recovery} recovers them with no similarity correction. In particular $|Q_{\sigma^*\p}| = |Q_\p|$. Together the three involutions generate a group of eight signed relabelings of the parameter space (the $D_4$-action whose orbits give the eight-fold redundancy of Section~\ref{s:QuadParam}), and $\P_{\mathrm K}$ is $D_4$-invariant, since each $\sigma$ permutes the vertex distances \eqref{e:vertex-distances} of $Q_\p$.

Both $f$ and $f_N$ are invariant under each involution. For $f$ this is immediate: $f(\p) = F(Q_\p)$ depends only on the Laplace--Neumann spectrum of $Q_\p$, hence only on its congruence class. For $f_N$, writing $u = x_1 + \tfrac{1}{2}$ and $v = x_2 + \tfrac{1}{2}$, the basis functions $\psi_{m,n}(x_1, x_2) = \cos(m\pi u) \cos(n\pi v)$ satisfy
\begin{equation}\label{e:psi-rules}
\psi_{m,n} \circ \sigma_h = (-1)^m \psi_{m,n}, \qquad
\psi_{m,n} \circ \sigma_v = (-1)^n \psi_{m,n}, \qquad
\psi_{m,n} \circ \rho = \psi_{n,m},
\end{equation}
using, for example, that $\sigma_h$ sends $u \mapsto 1 - u$ and $\cos(m\pi(1-u)) = (-1)^m \cos(m\pi u)$. Each rule sends a basis function to $\pm$ another and preserves the index set $\{(m,n) \colon 0 < m + n \le N\}$, so $\widetilde V^N \circ \sigma = \widetilde V^N$. Composition with $\sigma$ is therefore an isometry of $H^1_\diamond(Q_{\sigma^*\p})$ onto $H^1_\diamond(Q_\p)$ for both the $L^2$- and the Dirichlet inner products, and it maps $V^N_{\sigma^*\p}$ onto $V^N_\p$ (the means transform compatibly, the areas being equal); hence the restricted maximum defining $F^V$ is unchanged, and $f_N(\sigma^*\p) = f_N(\p)$.

For the remainder of this subsection, let $g$ denote either $f$ or $f_N$; the arguments below apply uniformly to both, each being smooth near $\bzero$ (see the remark closing the previous subsection). The invariance identities kill the gradient at $\bzero$ by the chain rule alone: differentiating $g(a, b, c, d) = g(a, -b, c, -d)$ in $b$ at $\bzero$ gives $\partial_b g(\bzero) = -\partial_b g(\bzero) = 0$, and differentiating in $d$ gives $\partial_d g(\bzero) = 0$; the $\sigma_h^*$-identity likewise gives $\partial_c g(\bzero) = 0$; and differentiating $g(a, b, c, d) = g(-a, -c, -b, d)$ in $a$ gives $\partial_a g(\bzero) = 0$. Hence $\nabla g(\bzero) = \bzero$.

The same identities constrain the Hessian, $H = \Hess_\p\, g(\bzero)$. The involutions $\sigma_h^*$ and $\sigma_v^*$ are the diagonal sign matrices $\mathrm{diag}(1, 1, -1, -1)$ and $\mathrm{diag}(1, -1, 1, -1)$; writing $\sigma = \mathrm{diag}(\varepsilon_a, \varepsilon_b, \varepsilon_c, \varepsilon_d)$ for either one, invariance gives $\sigma H \sigma = H$, that is, $H_{xy} = \varepsilon_x \varepsilon_y H_{xy}$ entrywise. The sign patterns $(+,+), (+,-), (-,+), (-,-)$ of $a, b, c, d$ under $(\sigma_h^*, \sigma_v^*)$ are pairwise distinct, so every pair $x \ne y$ has $\varepsilon_x \varepsilon_y = -1$ for one of the two, and $H$ is \emph{diagonal} in the basis $(e_a, e_b, e_c, e_d)$. Taking $\partial_b^2$ of the $\rho^*$-identity $g(a, b, c, d) = g(-a, -c, -b, d)$ at $\bzero$ gives $H_{bb} = H_{cc}$ (the sign of $-b$ squares away), so $H = \mathrm{diag}(H_{aa}, H_{bb}, H_{bb}, H_{dd})$, with at most three independent entries. In particular, this applies to the Hessian $H_N := \Hess_\p\, f_N(\bzero)$ of the Rayleigh--Ritz approximant, whose entries are computed in closed form for $N \in \{1, 2\}$ in Section~\ref{s:LocalMax} (Corollary~\ref{c:closed-form-hessian}).

\subsection*{Proof strategy for Theorem~\ref{t:Main}}
\label{ss:strategy}

By Kr\"oger's inequality \eqref{e:KrogerCap}, any $\p$ with $|Q_\p|\mu_1(Q_\p) \ge \pi^2$ lies in $\P_{\mathrm{K}}$, so to prove Theorem~\ref{t:Main} it suffices to show
\begin{equation}\label{e:f2-target}
  |Q_\p|\,\mu_1(Q_\p) \;<\; \pi^2, \qquad \forall \p \in \P_{\mathrm{K}} \setminus \{\bzero\}.
\end{equation}
Fix $N = 2$ for the remainder of the paper, and let $\lambda_1(\p) \le \cdots \le \lambda_5(\p)$ be the generalized eigenvalues of the pencil $(K(\p), M(\p))$. 
Since $V^2_\p \subset H^1_\diamond(Q_\p)$, two certificates for \eqref{e:f2-target} are available: $f_2(\p) > \tfrac{1}{\pi^2}$ implies $|Q_\p|\mu_1(Q_\p) < \pi^2$ by \eqref{e:KeyInequality}, and $|Q_\p|\lambda_1(\p) < \pi^2$ implies the same, since the smallest Rayleigh--Ritz eigenvalue over-estimates $\mu_1$. The set $\P_{\mathrm{K}}$ is compact (Lemma~\ref{l:PK-bounded}) and $\bzero \in \operatorname{int}\P$ (each $c_i(\bzero) = 1$), and we cover $\P_{\mathrm{K}} \setminus \{\bzero\}$ by two overlapping regions, using one certificate on each:

\begin{itemize}
\item \textbf{Step (I) (local).} On the closed ball $\overline{B(\bzero, \rho^\sharp)}$ of explicit radius $\rho^\sharp := \lambda_{\min}(H_2) = \tfrac{3232}{27\pi^6} \approx 0.1245$, we certify $f_2(\p) > \tfrac{1}{\pi^2}$ for $\p \ne \bzero$ by a second-order difference-quotient test (Section~\ref{s:LocalMax}, Corollary~\ref{t:LocalMinExplicit}; Appendix~\ref{app:diff-quotient}).

\item \textbf{Step (II) (global).} On $\Omega_{\mathrm{II}} := \P_{\mathrm{K}} \setminus B\bigl(\bzero, \tfrac{\rho^\sharp}{2}\bigr)$ (the \emph{open} ball is removed, so $\Omega_{\mathrm{II}}$ is compact and overlaps the local ball on the collar $\tfrac{\rho^\sharp}{2} \le \|\p\| \le \rho^\sharp$), we certify $|Q_\p|\lambda_1(\p) < \pi^2$ by a finite cover of axis-aligned boxes (Section~\ref{s:GlobalMax}, Theorem~\ref{t:box-cover-terminates}; Appendix~\ref{app:implementation}).
\end{itemize}

The two steps use different certificates because the obstruction differs by region. Near the square the bottom pair is degenerate, $\mu_1(\square) = \mu_2(\square) = \pi^2$, so $\mu_1$ alone is only Lipschitz; the symmetric two-eigenvalue functional $f_2$ is real-analytic through the degeneracy (Lemma~\ref{l:f2-smooth}), and a second-order difference-quotient test certifies $f_2 > \pi^{-2}$ on a ball about the square (Step~(I)). Away from the square $f_2$ is no longer suitable: the larger second eigenvalue can drive it below $\tfrac{1}{\pi^2}$, and the bottom pair need not remain isolated, so the analyticity of Lemma~\ref{l:f2-smooth} is lost. Step~(II) instead bounds the single smallest eigenvalue directly, wherever $M(\p) \succ 0$, requiring neither a spectral gap nor analyticity.

Neither step computes $\mu_1$ on the perturbed quadrilateral $Q_\p$ or invokes a posteriori FEM error analysis~\cite{LiuOishi2013}: the verification operates on integrals over the fixed reference square (Section~\ref{app:integral-rep}).

\section{Properties of the Rayleigh--Ritz approximation on the reference square}\label{app:common}

In this section we assemble the computational framework used by both the local step (Section~\ref{s:LocalMax}) and the global step (Section~\ref{s:GlobalMax}): the matrices $K(\p), M(\p)$ of the Rayleigh--Ritz approximation, their pullback to the reference square and integral representation, the real-analyticity of $f_2$, and the certified cluster margins on the local ball (Proposition~\ref{l:M3-bound}).

\subsection{The pencil and its equivariance}\label{app:setup}

Recall the test subspace $V^N_\p \subset H^1_\diamond(Q_\p)$ of Section~\ref{s:RayleighRitz}, obtained by restricting the trigonometric basis $\widetilde V^N$ to $Q_\p$ and subtracting means, with $f_N(\p) = F^{V^N_\p}(Q_\p)$. Index the modes by $\mathcal I_N := \{(m, n) \in \mathbb Z_{\geq 0}^2 \colon 0 < m + n \le N\}$, fix an enumeration, and write the corresponding basis functions
\[
\psi_{m,n}(x;\p) \;:=\; \cos\bigl(m\pi(x_1 + \tfrac{1}{2})\bigr)\,\cos\bigl(n\pi(x_2 + \tfrac{1}{2})\bigr) \;-\; \frac{1}{|Q_{\p}|}\!\int_{Q_{\p}}\!\cos\bigl(m\pi(y_1 + \tfrac{1}{2})\bigr)\,\cos\bigl(n\pi(y_2 + \tfrac{1}{2})\bigr)\, dy.
\]
The mean subtraction ensures $\psi_{m,n}(\cdot;\p) \in H^1_\diamond(Q_{\p})$.

The mass and stiffness matrices $M(\p) := M^{V^N_{\p}}$ and $K(\p) := K^{V^N_{\p}}$ have entries
\[
M_{ij}(\p) \;=\; \int_{Q_{\p}} \psi_i(x;\p)\,\psi_j(x;\p)\, dx, \qquad
K_{ij}(\p) \;=\; \int_{Q_{\p}} \nabla \psi_i(x;\p) \cdot \nabla \psi_j(x;\p)\, dx.
\]
Let $\lambda_1(\p) \le \lambda_2(\p)$ be the two smallest generalized eigenvalues of the pencil $(K(\p), M(\p))$. The maximum defining $f_N(\p)$ is \eqref{e:VarForm} restricted to $V^N_{\p}$. The reversed pencil $(M(\p), K(\p))$ has eigenvalues $1/\lambda_i(\p)$, whose two largest are $1/\lambda_1(\p)$ and $1/\lambda_2(\p)$. So by the P\'olya--Schiffer argument~\cite{PolyaSchiffer1954} (see also \cite{Fan1949}), the constrained maximum
\[
\max\Bigl\{ E_{Q_{\p}}(u, v) \colon u, v \in V^N_{\p},\ \textstyle\int_{Q_{\p}} \nabla u \cdot \nabla v\, dx = 0 \Bigr\} \;=\; \frac{1}{\lambda_1(\p)} + \frac{1}{\lambda_2(\p)},
\]
attained at the eigenvectors $\xi_1, \xi_2$ of $(K(\p), M(\p))$ belonging to $\lambda_1(\p), \lambda_2(\p)$. Take $\xi_1, \xi_2$ to be $M(\p)$-orthogonal, which is possible even when $\lambda_1(\p) = \lambda_2(\p)$, as at the square. If $u, v \in V^N_{\p}$ are the associated test functions, the constraint in \eqref{e:VarForm} is exactly $K(\p)$-orthogonality of the coefficients,
\[
\int_{Q_{\p}} \nabla u \cdot \nabla v\, dx \;=\; \xi_1^\top K(\p)\,\xi_2 \;=\; \lambda_2(\p)\,\xi_1^\top M(\p)\,\xi_2 \;=\; 0 ,
\]
using $K(\p)\xi_2 = \lambda_2(\p) M(\p)\xi_2$ and then the $M(\p)$-orthogonality. The pair is thus admissible. Hence $f_N(\p) = \tfrac{1}{2|Q_{\p}|}\bigl(\tfrac{1}{\lambda_1(\p)} + \tfrac{1}{\lambda_2(\p)}\bigr)$.

The $D_4$-symmetry acts on the pencil itself, by signed permutations of the modes:
\begin{equation}\label{e:equivariance}
K(\sigma^*\p) \;=\; P_\sigma^\top\, K(\p)\, P_\sigma, \quad
M(\sigma^*\p) \;=\; P_\sigma^\top\, M(\p)\, P_\sigma, \quad
\sigma \in \{\sigma_h, \sigma_v, \rho\},\ a^2+d^2<1.
\end{equation}
Here $P_\sigma$ is the signed permutation matrix read off \eqref{e:psi-rules}: writing $\psi_i \circ \sigma = s_i\, \psi_{\pi(i)}$, it has entries $(P_\sigma)_{\pi(i),\, i} = s_i$. Indeed, for $\p \in \operatorname{int}\P$, \eqref{e:equivariance} is the change of variables $x = \sigma(y)$ over $Q_{\sigma^*\p} = \sigma(Q_\p)$, the mean-subtraction terms transforming compatibly since the areas agree; both sides are real-analytic in $\p$ (Section~\ref{app:integral-rep}), so the identity persists on the region $\{a^2+d^2<1\}$, which is connected, contains $\operatorname{int}\P$, and is invariant under the three relabelings.

\subsection{Pullback to the reference square}
\label{app:pullback}
For computational purposes we pull integrals over $Q_{\p}$ back to the reference square $\square := [-\tfrac{1}{2}, \tfrac{1}{2}]^2$ via the bilinear interpolation map $\Phi_{\p} \colon \square \to Q_{\p}$ defined by
\[
\Phi_{\p}(u, v) \;=\;
\left(\tfrac{1}{2}-u\right)\left(\tfrac{1}{2}-v\right)\,\v_1
+ \left(\tfrac{1}{2}+u\right)\left(\tfrac{1}{2}-v\right)\,\v_2 
+ \left(\tfrac{1}{2}+u\right)\left(\tfrac{1}{2}+v\right)\,\v_3
+ \left(\tfrac{1}{2}-u\right)\left(\tfrac{1}{2}+v\right)\,\v_4,
\]
which sends the corners of $\square$ to the vertices $\v_1, \v_2, \v_3, \v_4$ of $Q_{\p}$ in cyclic order. At $\p = \bzero$, $\Phi_\bzero$ is the identity on $\square = Q_\bzero$.

Writing $\Phi_{\p}(u, v) = (X(u, v; \p),\, Y(u, v; \p))$, a direct computation in our parameterization yields the formula
\[
\Phi_{\p}(u, v) - \Phi_\bzero(u, v) \;=\; \begin{pmatrix} -a\, u - d\, v - 2 b\, u v \\[2pt] -d\, u + a\, v + 2 c\, u v \end{pmatrix}.
\]
The rectangle and rhombus parameters $a, d$ enter linearly in $u, v$, while the trapezoidal parameters $b, c$ enter only through the bilinear $uv$-term. The Jacobian
\[
J(u, v; \p) \;:=\; \det D\Phi_{\p}(u, v) \;=\; 1 - a^2 - d^2 + 2(c\, u - b\, v) - 2(a b\, v + a c\, u + b d\, u - c d\, v)
\]
integrates over $\square$ to give $|Q_{\p}| = 1 - a^2 - d^2$.

The pulled-back basis is built from the products
\[
\Lambda_{m, n}(u, v; \p) \;:=\; \cos\bigl(m\pi(X(u, v; \p) + \tfrac{1}{2})\bigr)\,\cos\bigl(n\pi(Y(u, v; \p) + \tfrac{1}{2})\bigr)
\]
by subtracting the mean:
\[
\widetilde\psi_{m,n}(u, v; \p) \;:=\; \Lambda_{m, n}(u, v; \p) \;-\; \frac{1}{|Q_{\p}|}\int_\square \Lambda_{m, n}\, J\, du\, dv. 
\]
At $\p = \bzero$, this collapses to $\widetilde\psi_{m,n} = \cos(m\pi(u + \tfrac{1}{2}))\cos(n\pi(v + \tfrac{1}{2}))$, exactly the unperturbed Neumann eigenfunctions of $\square = Q_\bzero$.

\subsection{Exact integral representation of \texorpdfstring{$K(\p), M(\p)$}{K(p), M(p)}}\label{app:integral-rep}
Let $A := J\, D\Phi_\p^{-1} D\Phi_\p^{-\top}$ be the coefficient matrix of the Dirichlet form pulled back from $Q_\p$ to $\square$, where $J = \det D\Phi_\p$ is the Jacobian of Section~\ref{app:pullback} (the entries of $D\Phi_\p$ are affine in $(u, v)$ and linear in $\p$). Then, with $\nabla = (\partial_u, \partial_v)^\top$, for $(m_i, n_i), (m_j, n_j) \in \mathcal I_N$:
\begin{align*}
K_{ij}(\p) &\;=\; \int_\square \nabla \Lambda_i^\top A\, \nabla \Lambda_j\, du\, dv, \\
M_{ij}(\p) &\;=\; \int_\square \Lambda_i\, \Lambda_j\, J\, du\, dv \;-\; \frac{1}{|Q_{\p}|}\!\int_\square \Lambda_i\, J\, du\, dv\,\int_\square \Lambda_j\, J\, du\, dv.
\end{align*}
The integrand of $K_{ij}$ is, up to an overall factor $J^{-1}$, a finite sum of products of (i) polynomials in $u, v, \p$ of bidegree at most $(2, 2)$ in $(u, v)$, and (ii) trigonometric factors $\cos$ or $\sin$ evaluated at $m\pi(X + \tfrac{1}{2})$ or $n\pi(Y + \tfrac{1}{2})$, where the arguments are polynomials in $u, v, \p$ of bidegree at most $(1, 1)$ in $(u, v)$ and degree $1$ in $\p$. The integrand of $M_{ij}$ has the same form with no $J^{-1}$ factor. Since $J$ is affine in $(u, v)$ with corner values $J(\v_k^0; \p) = c_{k-1}(\p)$ (indices mod $4$), we have $J > 0$ on $\square$ exactly when $c_i(\p) > 0$ for all $i$, that is, on the interior of $\P$; the apparent $J^{-1}$ in $K_{ij}$ (the single factor carried by $A$) is nonetheless removable: since $\Lambda_i = g_i \circ \Phi_\p$ with ambient mode $g_i(x) = \cos\bigl(m_i\pi(x_1 + \tfrac{1}{2})\bigr)\cos\bigl(n_i\pi(x_2 + \tfrac{1}{2})\bigr)$, the chain rule $\nabla \Lambda_i = D\Phi_\p^\top\, (\nabla g_i) \circ \Phi_\p$ collapses the contraction to
\[
K_{ij}(\p) \;=\; \int_\square \bigl(\nabla g_i \cdot \nabla g_j\bigr)\circ\Phi_\p\; J\, du\, dv,
\]
with the Jacobian in the numerator, so $K_{ij}(\p)$ is regular up to $\partial\P$.

Although these integrals do not evaluate to elementary closed forms in $\p$ (the bilinear $uv$-term inside the cosines prevents this), the entries of $K(\p)$ are real-analytic in $\p$ on all of $\mathbb R^4$, and those of $M(\p)$ on the region $\{a^2+d^2<1\}$ where $|Q_\p|>0$ (the mean subtraction involves $1/|Q_\p|$); both
admit exact, computable Taylor expansions in $\p$ at $\bzero$ to arbitrary order; the coefficients at the square are computed in Section~\ref{app:first-order}. We work throughout with the truncation order $N = 2$, indexing modes by $\mathcal I_2 = \bigl((1, 0),\, (0, 1),\, (1, 1),\, (2, 0),\, (0, 2)\bigr)$.

\subsection{Analyticity and certified cluster margins}\label{ss:f2-smooth}

The local step (Section~\ref{s:LocalMax}) Taylor-expands $f_2$ about $\bzero$, which requires that $f_2$ be real-analytic on a ball about the square and that its derivatives there be computable. We treat regularity here, and the computation in Section~\ref{s:LocalMax}: Lemma~\ref{l:f2-smooth} gives checkable conditions for analyticity, with contour formulas for the cluster sum and product.

\begin{lemma}[Analyticity and contour representation]\label{l:f2-smooth}
Let $U \subseteq \mathbb R^4$ be an open set on which the following three conditions hold:
\begin{enumerate}
\item[(i)] $c_i(\p) > 0$ for $i = 1, \ldots, 4$;
\item[(ii)] $M(\p) \succ 0$;
\item[(iii)] the two smallest generalized eigenvalues $\lambda_1(\p) \le \lambda_2(\p)$ of the pencil $(K(\p), M(\p))$ lie in the open interval $\left(\tfrac{\pi^2}{2}, \tfrac{3\pi^2}{2}\right)$, and the remaining three exceed $\tfrac{3\pi^2}{2}$.
\end{enumerate}
Then the cluster $\{\lambda_1(\p), \lambda_2(\p)\}$ is isolated from the rest of the spectrum, and its elementary symmetric functions $\Sigma(\p) := \lambda_1(\p) + \lambda_2(\p)$ and $\Pi(\p) := \lambda_1(\p)\,\lambda_2(\p)$ are real-analytic on $U$, given by the contour integrals over $\Gamma := \{w \in \mathbb C \colon |w - \pi^2| = \tfrac{\pi^2}{2}\}$,
\begin{align*}
  \Sigma(\p) &\;=\; -\tr\!\biggl(\frac{1}{2\pi i}\!\oint_{\Gamma}\!
                       w\,\bigl(K(\p) - w M(\p)\bigr)^{-1} M(\p)\,dw\biggr), \\
  \Pi(\p) &\;=\; \frac{\Sigma(\p)^2}{2} + \frac{1}{2}\,\tr\!\biggl(\frac{1}{2\pi i}\!\oint_{\Gamma}\!
                       w^2\,\bigl(K(\p) - w M(\p)\bigr)^{-1} M(\p)\,dw\biggr).
\end{align*}
Consequently
\begin{equation}\label{e:f2-via-Sigma-Pi}
  f_2(\p) \;=\; \frac{1}{2\,|Q_\p|}\!\left(\frac{1}{\lambda_1(\p)}
                                          + \frac{1}{\lambda_2(\p)}\right)
            \;=\; \frac{\Sigma(\p)}{2\,|Q_\p|\,\Pi(\p)}
\end{equation}
is real-analytic on $U$, with $|Q_\p| = 1 - a^2 - d^2 > 0$ and $\Pi(\p) > \tfrac{\pi^4}{4} > 0$.
\end{lemma}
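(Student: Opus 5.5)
The plan is to reduce everything to finite-dimensional spectral theory for the definite pencil $(K(\p),M(\p))$, exploiting that $K$ and $M$ are real-analytic in $\p$ (Section~\ref{app:integral-rep}).

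\textbf{Domain of analyticity.} First I check that $U$ lies in the region where $M$ is analytic. By (i) and the identity $c_1+c_3 = 2(1-a^2-d^2)$ from \eqref{e:cross}, we get $|Q_\p| = 1-a^2-d^2 > 0$ on $U$. Hence $U \subset \{a^2+d^2<1\}$, where both $K(\p)$ and $M(\p)$ are real-analytic.

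\textbf{Spectral decomposition and the contour integrals.} By (ii), $M(\p)\succ 0$, so the pencil is similar to a symmetric problem. Concretely, $M^{-1}K = M^{-1/2}\bigl(M^{-1/2}KM^{-1/2}\bigr)M^{1/2}$ is diagonalizable with real eigenvalues $\lambda_1,\dots,\lambda_5$. Writing $K - wM = M(M^{-1}K - w)$, we get
\[
(K-wM)^{-1}M = (M^{-1}K - w)^{-1},
\]
whose eigenvalues are $(\lambda_j - w)^{-1}$. This matrix exists exactly when $w \notin \{\lambda_j\}$, since $\det(K-wM) = \det M\,\prod_j(\lambda_j-w)$. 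Hypothesis (iii) says that exactly $\lambda_1,\lambda_2$ lie inside $\Gamma$, and that no eigenvalue lies on $\Gamma$: the real eigenvalues avoid the two crossing points $\pi^2/2$ and $3\pi^2/2$ of $\Gamma$ with the real axis. This is the isolation claim.

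Taking traces, the residue theorem gives
\[
\frac{1}{2\pi i}\oint_\Gamma \frac{w^k}{\lambda_j - w}\,dw = -\lambda_j^k
\]
for $\lambda_j$ inside $\Gamma$, and $0$ for $\lambda_j$ outside. Therefore
\[
\tr\Bigl(\frac{1}{2\pi i}\oint_\Gamma w^k (K-wM)^{-1}M\,dw\Bigr) = -(\lambda_1^k+\lambda_2^k).
\]
The case $k=1$ yields the formula for $\Sigma$. The case $k=2$, combined with $\Pi = \tfrac12\bigl(\Sigma^2 - (\lambda_1^2+\lambda_2^2)\bigr)$, yields the formula for $\Pi$.

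\textbf{Analyticity.} Fix $\p_0\in U$. The map $(\p,w)\mapsto \det(K(\p)-wM(\p))$ is continuous and nonvanishing on the compact set $\{\p_0\}\times\Gamma$. Hence it stays nonvanishing on $\overline{B(\p_0,r)}\times\Gamma$ for some $r>0$, even after complexifying $\p$ in a small polydisc, since $K$ and $M$ extend holomorphically there. On that set the integrand $w^k(K-wM)^{-1}M$ is jointly continuous and holomorphic in $\p$. Differentiation under the integral, or Morera's theorem, then shows that the contour integrals, and hence $\Sigma$ and $\Pi$, are holomorphic in the complexified $\p$ and real-analytic on $U$.

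The step that needs care is this uniformity: the contour $\Gamma$ must be valid simultaneously for all nearby parameters. The argument above handles it by compactness of $\Gamma$ and continuity of the determinant. This also shows that the count of eigenvalues inside $\Gamma$ cannot change locally, consistent with (iii).

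\textbf{Conclusion.} By (iii), $\lambda_1,\lambda_2>\pi^2/2$, so $\Pi > \pi^4/4 > 0$. The formula $f_2 = \frac{1}{2|Q_\p|}\bigl(\lambda_1^{-1}+\lambda_2^{-1}\bigr)$ was established in Section~\ref{app:setup}, and
\[
\lambda_1^{-1}+\lambda_2^{-1} = \frac{\Sigma}{\Pi}.
\]
This gives \eqref{e:f2-via-Sigma-Pi}. Since $\Sigma$, $\Pi$ and $|Q_\p|$ are real-analytic on $U$ and the denominator $2|Q_\p|\Pi$ is positive there, $f_2$ is real-analytic on $U$.
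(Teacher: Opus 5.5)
Your proof is correct and follows the same route as the paper. You get analyticity of $K,M$ on $U$ from $c_1+c_3=2|Q_\p|$, then apply residue calculus to the spectral expansion of $(K-wM)^{-1}M$, which you write as $(M^{-1}K-w)^{-1}$ where the paper uses an $M$-orthonormal eigenbasis, and conclude analyticity of the contour integrals because $\Gamma$ avoids the spectrum uniformly near each point; your compactness argument for that last step spells out what the paper simply cites from Kato.
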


\begin{proof}
By (i), the entries of $K(\p), M(\p)$ are real-analytic on $U$
(Section~\ref{app:integral-rep}), and $|Q_\p| > 0$ since
$c_1 + c_3 = 2|Q_\p|$ by \eqref{e:cross}. By (ii) the pencil is
symmetric-definite, with real eigenvalues and an $M(\p)$-orthonormal
eigenbasis $\{v_j\}$; by (iii) the cluster $\{\lambda_1, \lambda_2\}$ lies
inside $\Gamma$ and the remaining three eigenvalues lie outside, so
$(K(\p) - wM(\p))^{-1}$ is real-analytic in $(w, \p)$ near $\Gamma \times U$
and both contour integrals are real-analytic on $U$~\cite[Ch.~II]{Kato1995}.
The expansion $(K - wM)^{-1}M = \sum_j (\lambda_j - w)^{-1} v_j v_j^\top M$
and the residue theorem give the displayed formulas, and
$\Pi > \tfrac{\pi^4}{4} > 0$ by (iii), so \eqref{e:f2-via-Sigma-Pi} exhibits
$f_2$ as a ratio of real-analytic functions with nonvanishing denominator.
\end{proof}

The following proposition records the certified cluster margins on the local
ball, together with their consequence for the radius of analyticity.

\begin{proposition}[Certified cluster margins and analyticity]\label{l:M3-bound}
\hspace{1em}
\begin{enumerate}
\item[(a)] On the \emph{closed} ball
$\overline{B(\bzero,\rho^\sharp)}$, we have $M(\p)\succ0$, and the
bottom cluster $\{\lambda_1,\lambda_2\}$ is isolated, with
\[
\frac{\pi^2}{2}<7.630\le\lambda_1(\p)\le\lambda_2(\p)
\le13.268<\frac{3\pi^2}{2},
\qquad
\lambda_2(\p)<\lambda_3(\p).
\]
\[
\lambda_{\min}(M(\p))\ge0.0521,\qquad
\lambda_3(\p)-\frac{3\pi^2}{2}\ge3.030.
\]
\item[(b)] Consequently, there exists $\rho_\circ > \rho^\sharp$ such that
$B(\bzero,\rho_\circ)\subset\P$, $M(\p)\succ0$, and
$\lambda_2(\p)<\lambda_3(\p)$ throughout $B(\bzero,\rho_\circ)$;
in particular, $f_2$ is real-analytic on $B(\bzero,\rho_\circ)$.
\end{enumerate}
\end{proposition}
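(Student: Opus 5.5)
Part (a) is a certified computation over the compact ball $\overline{B(\bzero,\rho^\sharp)}$, and part (b) follows from (a) by continuity and compactness; I would prove them in that order.

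\textbf{Part (a): covering the ball.} First I would note that $\overline{B(\bzero,\rho^\sharp)} \subset \operatorname{int}\P$. By \eqref{e:cross}, each $c_i(\p)$ is at least $1 - a^2 - d^2 - 2|\p|(1+|\p|) - |\p|^2$, which is positive for $|\p| \le \rho^\sharp \approx 0.1245$. Hence $J>0$ on $\square$, and the integral representation of Section~\ref{app:integral-rep} gives $K(\p)$ and $M(\p)$ as real-analytic integrals over the fixed reference square. Next I would enclose the ball in a finite union of small axis-aligned boxes $B_k$. On each box I would compute interval enclosures $[K](B_k)$ and $[M](B_k)$ of all matrix entries. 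For this I would evaluate the integrands of Section~\ref{app:integral-rep} in interval arithmetic and integrate with a validated tensor Gauss--Legendre rule whose remainder is bounded by derivative bounds of the (entire) trigonometric integrands. A cheaper alternative is a validated Taylor model in $\p$ around the box center, with a rigorous tail bound.

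\textbf{Part (a): eigenvalue bounds on each box.} On each box I would bound $\lambda_{\min}(M)$ from below by perturbation. Take the midpoint matrix $M_c$, compute $\lambda_{\min}(M_c)$ rigorously, and subtract the spectral-norm radius of the interval matrix. This yields $\lambda_{\min}(M(\p)) \ge 0.0521 > 0$, so the pencil is symmetric-definite. For the generalized eigenvalues I would use the index-certified solver \texttt{veigs} on the interval pencil. It returns disjoint enclosures of $\lambda_1,\dots,\lambda_5$ with certified indices. From these I would read off $\lambda_1 \ge 7.630$, $\lambda_2 \le 13.268$ and $\lambda_3 \ge 3\pi^2/2 + 3.030$. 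The last bound gives $\lambda_2 < \lambda_3$ for free. Taking the minima and maxima over all boxes gives the stated constants. If a box fails a test, I would bisect it and retry. Since at $\p=\bzero$ the values are $\pi^2,\pi^2,2\pi^2,4\pi^2,4\pi^2$, the margins are large and only moderate subdivision should be needed.

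\textbf{Part (b).} The set $S := \{\p \in \operatorname{int}\P\colon M(\p)\succ0,\ \lambda_2(\p)<\lambda_3(\p)\}$ is open. This is because $\P$'s interior is open, the entries of $M$ and $K$ are continuous, and eigenvalues of a definite pencil depend continuously on the matrices. Part (a) shows that the compact ball $\overline{B(\bzero,\rho^\sharp)}$ lies in $S$. Its distance to the closed set $S^c$ is therefore positive, and any $\rho_\circ$ between $\rho^\sharp$ and $\rho^\sharp$ plus that distance works. I would also shrink $\rho_\circ$ if needed so that condition (iii) of Lemma~\ref{l:f2-smooth} still holds on $B(\bzero,\rho_\circ)$. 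The strict margins in (a) are open conditions, so this is possible. Lemma~\ref{l:f2-smooth} then gives real-analyticity of $f_2$ on the open ball $B(\bzero,\rho_\circ)$.

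\textbf{Main obstacle.} The hardest step is the rigorous enclosure of the matrix entries uniformly over a box. The bilinear $uv$ term inside the cosines rules out closed forms, so both the quadrature error and the dependency-induced widening in $\p$ must be controlled. I would try Taylor models in $\p$, with exact coefficients at $\bzero$ as in Section~\ref{app:first-order}, before falling back to brute interval subdivision.
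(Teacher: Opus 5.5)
Your proposal is correct. Part (b) is essentially the paper's argument: hypotheses (i)--(iii) of Lemma~\ref{l:f2-smooth} hold with strict margins on the compact closed ball and so persist on a slightly larger open ball. Your bound on $c_i$, with $2$ in place of the paper's $\sqrt2$, is cruder but still gives $c_i>0.68$ on the ball.

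Part (a) is where you take a different route. You propose a standalone computation:
\begin{itemize}
\item an axis-aligned cover of the ball in $\p$-space, with interval enclosures of the full pencil on each box;
\item a midpoint--radius (Weyl) lower bound for $\lambda_{\min}(M)$;
\item \texttt{veigs} for every eigenvalue bound.
\end{itemize}
In effect this is the global test of Section~\ref{s:GlobalMax}, extended to the cluster and to $\lambda_3$.

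The paper runs no separate computation for (a). Its bounds are by-products of Algorithm~\ref{alg:single-box-certificate}, run on the polar cover of $S^3\times[0,\rho^\sharp]$ that is needed anyway to certify $S>0$ for Corollary~\ref{t:LocalMinExplicit}. There:
\begin{itemize}
\item $\lambda_1,\lambda_2$ are enclosed as $\pi^2+[t][\nu_i]$ from the Schur-complement root enclosures, with $\lambda_1$ also intersected with a \texttt{veigs} enclosure;
\item $\lambda_3\ge 17.834$ comes from \texttt{veigs}, and $M\succeq 0.0521\,I_5$ from a Gershgorin bound;
\item the double eigenvalue at $\p=\bzero$ is absorbed by the continuous extensions of the difference quotients at $t=0$.
\end{itemize}
So the paper obtains (a) in the same run as the local certificate, with no additional cover. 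Your route is conceptually simpler and independent of the difference-quotient machinery, and your spectral-norm bound on $M$ is typically sharper than Gershgorin.

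Two small corrections to your version:
\begin{itemize}
\item \emph{Clustered eigenvalues near the origin.} On boxes at or near $\bzero$ one has $\lambda_1\approx\lambda_2$ and $\lambda_4\approx\lambda_5$, with equality at $\bzero$. No solver can return disjoint enclosures of all five eigenvalues there. You should instead ask \texttt{veigs} for a cluster enclosure with certified index range $\{1,2\}$ together with an enclosure of $\lambda_3$, which is all that (a) requires.
\item \emph{The stated constants and their margins.} The constants $7.630$, $13.268$, $0.0521$, $3.030$ are outputs of the paper's particular run. Near the boundary sphere the margins are only a few tenths, not large. For example, at the rectangle $\p=(\rho^\sharp,0,0,0)$, where the pencil decouples by parity, $\lambda_1\approx 7.95$ and $\lambda_2\approx 13.05$. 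Reproducing the stated bounds therefore requires your cover to be refined there.
\end{itemize}
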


\begin{proof}
Part~(a) is certified in Appendix~\ref{app:diff-quotient}:
Table~\ref{tab:local-second-order-certificate} gives the stated bounds
for $\lambda_1,\lambda_2$, and
Corollary~\ref{cor:local-cluster-gap} gives
$M(\p)\succ0$ and $\lambda_2(\p)<\lambda_3(\p)$.
For part~(b), the explicit estimate
\[
  c_i(\p) \;\ge\; 1 - \|\p\|^2 - \sqrt 2\, \|\p\|\bigl(1 + \|\p\|\bigr)
  - \sqrt 2\, \|\p\|^2 \;>\; 0.69,
\]
holds for $\|\p\|\le0.15$ and hence on the closed local ball.
Together with part~(a), the hypotheses (i)--(iii) of Lemma~\ref{l:f2-smooth}
hold with strict margins on the closed ball, so by compactness they persist on
$B(\bzero,\rho_\circ)$ for some $\rho_\circ>\rho^\sharp$; Lemma~\ref{l:f2-smooth}
then gives the analyticity of $f_2$. 
\end{proof}

\section{The square is a local maximizer}
\label{s:LocalMax}

We establish Step~(I) of the strategy of Section~\ref{s:RayleighRitz} in three stages. We first record the first- and second-order Taylor coefficients of the pencil $(K(\p), M(\p))$ at the square (Section~\ref{app:first-order}) and compute the Hessian $H_N = \Hess_\p\, f_N(\bzero)$ in closed form, by a second-order expansion of the cluster sum $\lambda_1^{-1} + \lambda_2^{-1}$ along rays through the square (Section~\ref{ss:cluster-sum}). We then deduce a qualitative local-maximality statement (Theorem~\ref{t:LocalMin}), together with a quantitative refinement (Corollary~\ref{t:LocalMinExplicit}) which gives the radius $\rho^\sharp$ of Step~(I) explicitly, in closed form.

\subsection{The first- and second-order matrices}\label{app:first-order}
We expand the pencil of Section~\ref{app:integral-rep} at the square,
\begin{equation}\label{e:KM-expansion}
K(\p) \;=\; K^{(0)} + \sum_x p_x\, K^{(1)}_x + \tfrac{1}{2}\sum_{x, y} p_x p_y\, K^{(2)}_{xy} + O(\|\p\|^3),
\end{equation}
and similarly for $M(\p)$, with sums over $x, y \in \{a, b, c, d\}$ (identifying $p_a = a$, $p_b = b$, $p_c = c$, $p_d = d$) and coefficients $K^{(1)}_x := \partial_{p_x} K(\bzero)$, $K^{(2)}_{xy} := \partial_{p_x}\partial_{p_y} K(\bzero)$ (likewise for $M$).

At $O(1)$, the $L^2$- and $H^1$-orthogonality of Neumann eigenfunctions on the unit square ($\p = \bzero$) gives
\[
K^{(0)} \;=\; \mathrm{diag}\!\left(\tfrac{\pi^2}{2},\; \tfrac{\pi^2}{2},\; \tfrac{\pi^2}{2},\; 2\pi^2,\; 2\pi^2\right)
\quad \textrm{and} \quad
M^{(0)} \;=\; \mathrm{diag}\!\left(\tfrac{1}{2},\; \tfrac{1}{2},\; \tfrac{1}{4},\; \tfrac{1}{2},\; \tfrac{1}{2}\right).
\]
Note that the diagonal entry of $M^{(0)}$ corresponding to mode $(1, 1)$ is $\tfrac{1}{4}$ (and not $\tfrac{1}{2}$, as for the other diagonal entries) because $\int_\square \cos^2(\pi(x_1+\tfrac{1}{2}))\,\cos^2(\pi(x_2+\tfrac{1}{2}))\,dx_1\,dx_2 = \tfrac{1}{2}\cdot\tfrac{1}{2} = \tfrac{1}{4}$.
The generalized eigenvalues $\tfrac{K^{(0)}_{jj}}{M^{(0)}_{jj}} = \pi^2(m_j^2 + n_j^2)$ are the squared Neumann frequencies of the unit square: $\pi^2, \pi^2, 2\pi^2, 4\pi^2, 4\pi^2$. The two smallest are degenerate at $\pi^2$ and span the eigenspace $V_0 := \mathrm{span}\bigl(e_{(1,0)},\, e_{(0,1)}\bigr)$. The orthogonal complement $V_0^\perp = \mathrm{span}(e_{(1,1)}, e_{(2,0)}, e_{(0,2)})$ corresponds to the higher eigenvalues $2\pi^2, 4\pi^2, 4\pi^2$.

Each first-order matrix $K^{(1)}_x, M^{(1)}_x$ ($x \in \{a, b, c, d\}$) is sparse. Table~\ref{tab:first-order} lists all nonzero entries by $(\text{row mode}, \text{column mode})$ index; any entry not listed is zero, and the symmetric mode-index pair $(j, i)$ has the same value as $(i, j)$. Direction $a$ is diagonal; directions $b, c, d$ have only off-diagonal entries. Direction $c$ is obtained from $b$ by the diagonal-reflection symmetry $\rho^*$: since $\rho^*$ sends $(a, b, c, d) \mapsto (-a, -c, -b, d)$ and acts on mode indices as $(m, n) \leftrightarrow (n, m)$, the matrices in the $c$-direction equal those in the $b$-direction transposed in the mode-index pairing and multiplied by $-1$; explicitly, $X^{(1)}_c = -\,P_\rho\, X^{(1)}_b\, P_\rho$ for $X \in \{K, M\}$, where $P_\rho$ is the permutation matrix exchanging the modes $(1,0) \leftrightarrow (0,1)$ and $(2,0) \leftrightarrow (0,2)$ (differentiate \eqref{e:equivariance} along $e_b$ and use $\rho^* e_b = -e_c$).

\begin{table}[p!]
\centering
\renewcommand{\arraystretch}{1.15}
\setlength{\tabcolsep}{6pt}
\begin{tabular}{c c l l}
\hline
$x$ & Matrix & Nonzero entries (mode-pair indices) & Value \\
\hline
$a$ & $K^{(1)}_a$ & $\mathrm{diag}$ on $(1,0), (0,1), (1,1), (2,0), (0,2)$ &
  $\bigl(\tfrac{\pi^2}{2},\, -\tfrac{\pi^2}{2},\, 0,\, 2\pi^2,\, -2\pi^2\bigr)$ \\
$a$ & $M^{(1)}_a$ & $\mathrm{diag}$ on $(1,0), (0,1), (1,1), (2,0), (0,2)$ &
  $\bigl(-\tfrac{1}{2},\, \tfrac{1}{2},\, 0,\, -\tfrac{1}{2},\, \tfrac{1}{2}\bigr)$ \\
\hline
$b$ & $K^{(1)}_b$ & $\bigl((0,1), (0,2)\bigr)$ & $\tfrac{32}{9}$ \\
$b$ & $M^{(1)}_b$ & $\bigl((1,0), (1,1)\bigr)$, $\bigl((0,1), (2,0)\bigr)$ & $\tfrac{4}{\pi^2}$ \\
    &             & $\bigl((0,1), (0,2)\bigr)$ & $\tfrac{20}{9\pi^2}$ \\
\hline
$c$ & $K^{(1)}_c$ & $\bigl((1,0), (2,0)\bigr)$ & $-\tfrac{32}{9}$ \\
$c$ & $M^{(1)}_c$ & $\bigl((0,1), (1,1)\bigr)$, $\bigl((1,0), (0,2)\bigr)$ & $-\tfrac{4}{\pi^2}$ \\
    &             & $\bigl((1,0), (2,0)\bigr)$ & $-\tfrac{20}{9\pi^2}$ \\
\hline
$d$ & $K^{(1)}_d$ & $\bigl((1,1), (2,0)\bigr)$, $\bigl((1,1), (0,2)\bigr)$ & $-\tfrac{32}{9}$ \\
$d$ & $M^{(1)}_d$ & $\bigl((1,0), (0,1)\bigr)$ & $-\tfrac{8}{\pi^2}$ \\
    &             & $\bigl((1,1), (2,0)\bigr)$, $\bigl((1,1), (0,2)\bigr)$ & $-\tfrac{56}{9\pi^2}$ \\
\hline
\end{tabular}
\caption{First-order matrices $K^{(1)}_x, M^{(1)}_x$ at $\p = \bzero$.}
\label{tab:first-order}
\end{table}

At second order we need only the pure directions $x = y$: by the $D_4$-symmetry of Section~\ref{s:RayleighRitz} the Hessian $H_N$ is diagonal in $(e_a, e_b, e_c, e_d)$, and Corollary~\ref{c:closed-form-hessian} below evaluates its entries along the coordinate directions. For each $x \in \{a, b, c, d\}$ we record $K^{(2)}_{xx} := \partial_x^2 K(\bzero)$ and $M^{(2)}_{xx} := \partial_x^2 M(\bzero)$, $5 \times 5$ symmetric matrices indexed by the modes in $\mathcal I_2$, in Table~\ref{tab:second-order-pure}.

The mean subtraction in the definition of $M(\p)$ contributes to each second-order mass matrix through the rank-one term $-2\, \mathbf{g}^x (\mathbf{g}^x)^T$, where $\mathbf{g}^x \in \mathbb R^5$ has entries $\mathbf{g}^x_i := \partial_{p_x}\! \int_\square \Lambda_i\, J\, du\, dv$ evaluated at $\p = \bzero$; explicitly,
\[
\mathbf{g}^a = (0, 0, 0, -1, 1), \quad
\mathbf{g}^b = \left(0, \tfrac{4}{\pi^2}, 0, 0, 0\right), \quad
\mathbf{g}^c = \left(-\tfrac{4}{\pi^2}, 0, 0, 0, 0\right), \quad
\mathbf{g}^d = \left(0, 0, -\tfrac{8}{\pi^2}, 0, 0\right).
\]
Table~\ref{tab:second-order-pure} lists the total second-order coefficients, mean-subtraction contributions included. The stiffness matrix vanishes in directions $a, b, c$, and is diagonal in direction $d$; in directions $b, c$, the only nonzero second derivative is the single diagonal mass entry produced by the mean subtraction.

\begin{table}[t!]
\centering
\renewcommand{\arraystretch}{1.15}
\setlength{\tabcolsep}{6pt}
\begin{tabular}{c c l l}
\hline
$x$ & Matrix & Nonzero entries (mode-pair indices) & Value \\
\hline
$a$ & $M^{(2)}_{aa}$ & $\mathrm{diag}$ on $(1,0), (0,1), (1,1)$ & $-2$ \\
    &                & $\mathrm{diag}$ on $(2,0), (0,2)$ & $-4$ \\
\hline
$b$ & $M^{(2)}_{bb}$ & $\mathrm{diag}$ on $(0,1)$ & $-\tfrac{32}{\pi^4}$ \\
\hline
$c$ & $M^{(2)}_{cc}$ & $\mathrm{diag}$ on $(1,0)$ & $-\tfrac{32}{\pi^4}$ \\
\hline
$d$ & $K^{(2)}_{dd}$ & $\mathrm{diag}$ on $(1,0), (0,1)$ & $-\pi^2$ \\
    &                & $\mathrm{diag}$ on $(1,1)$ & $-2\pi^2$ \\
    &                & $\mathrm{diag}$ on $(2,0), (0,2)$ & $-4\pi^2$ \\
$d$ & $M^{(2)}_{dd}$ & $\mathrm{diag}$ on $(1,0), (0,1), (2,0), (0,2)$ & $-1$ \\
    &                & $\mathrm{diag}$ on $(1,1)$ & $-\tfrac{128}{\pi^4}$ \\
    &                & $\bigl((2,0), (0,2)\bigr)$ & $2$ \\
\hline
\end{tabular}
\caption{Nonzero entries of the pure-direction second-order matrices $K^{(2)}_{xx}, M^{(2)}_{xx}$ at $\p = \bzero$. These matrices are diagonal except for the single off-diagonal pair $\bigl((2,0), (0,2)\bigr)$ in $M^{(2)}_{dd}$; ``$\mathrm{diag}$ on $(m,n)$'' denotes the diagonal entry at mode $(m,n)$. Any entry not listed is zero (in particular $K^{(2)}_{aa} = K^{(2)}_{bb} = K^{(2)}_{cc} = 0$), and the symmetric mode-index pair $(j, i)$ has the same value as $(i, j)$.}
\label{tab:second-order-pure}
\end{table}

\subsection{The Hessian via a cluster-sum expansion}\label{ss:cluster-sum}
We compute $H_N$ by second-order perturbation theory of the degenerate pair $\lambda_1(\bzero) = \lambda_2(\bzero) = \pi^2$, in the form of an expansion of the cluster sum $\lambda_1^{-1} + \lambda_2^{-1}$ along rays through the square. Since only symmetric functions of the pair enter, no eigenvector basis or nondegeneracy assumption is needed, and the answer is assembled from traces of the coefficient matrices of Section~\ref{app:first-order}.

The tool is the \emph{reduced resolvent}: the contour formulas of Lemma~\ref{l:f2-smooth} are exact, but evaluating them means inverting the full resolvent $(K(\p) - wM(\p))^{-1}$ along $\Gamma$, which is poorly conditioned near the cluster. The reduced resolvent inverts instead only the well-separated complementary block of the pencil, and is the form behind both cluster computations of the local step: the closed-form Hessian below and the scaled Schur complement of the difference-quotient test (Appendix~\ref{app:diff-quotient}).

\begin{lemma}[Reduced-resolvent representation of $\Sigma, \Pi$]\label{l:reduced-resolvent}
Assume the hypotheses of Lemma~\ref{l:f2-smooth} and fix a point $\p_\circ \in U$. Let $V = V(\p_\circ) \in \mathbb R^{5 \times 2}$ and $W = W(\p_\circ) \in \mathbb R^{5 \times 3}$ be $M(\p_\circ)$-orthonormal frames for the cluster eigenspace and its complement at $\p_\circ$, held \emph{fixed} as $\p$ varies. For $\p$ near $\p_\circ$ and $\lambda \in \left(\tfrac{\pi^2}{2}, \tfrac{3\pi^2}{2}\right)$, the interval enclosed by $\Gamma$, write $B(\lambda; \p) := K(\p) - \lambda M(\p)$. Then the \emph{reduced resolvent} $S_\perp(\lambda; \p) := \bigl(W^\top B(\lambda; \p)\, W\bigr)^{-1}$ is well-defined, and $\lambda_1(\p), \lambda_2(\p)$ are the two solutions there of $\det B^{\mathrm{eff}}(\lambda; \p) = 0$, where
\[
  B^{\mathrm{eff}}(\lambda; \p) \;=\; V^\top\! B(\lambda;\p)\, V \;-\; V^\top\! B(\lambda;\p)\, W \; S_\perp(\lambda; \p)\; W^\top\! B(\lambda;\p)\, V
\]
is the Schur complement of $B(\lambda; \p)$ onto the cluster frame. Thus $\Sigma$ and $\Pi$ are the sum and product of these two roots.
\end{lemma}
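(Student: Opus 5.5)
The plan is to combine the standard Schur complement determinant identity with a definiteness argument for the complementary block. The argument proceeds in three steps.

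\textbf{Step 1: change of basis.} Since $V,W$ are $M(\p_\circ)$-orthonormal frames spanning complementary subspaces, the matrix $T := [\,V\ W\,] \in \mathbb R^{5\times 5}$ is invertible. Hence
\[
\det B(\lambda;\p) = \det(T)^{-2}\,\det\bigl(T^\top B(\lambda;\p)\,T\bigr).
\]
The congruent matrix $T^\top B T$ is the $2+3$ block matrix with blocks $V^\top B V$, $V^\top B W$, $W^\top B V$ and $W^\top B W$. Whenever $W^\top B W$ is invertible, the Schur determinant formula gives
\[
\det(T^\top B T) = \det\bigl(W^\top B W\bigr)\,\det B^{\mathrm{eff}}(\lambda;\p).
\]
Consequently, on the set where $S_\perp$ is well defined, the zeros of $\det B^{\mathrm{eff}}(\lambda;\p)$ are exactly the zeros of $\det B(\lambda;\p)$. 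The latter are the generalized eigenvalues, counted with multiplicity.

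\textbf{Step 2: well-definedness of $S_\perp$.} I will show that $W^\top B(\lambda;\p)W$ is negative definite for $\lambda \in (\tfrac{\pi^2}{2},\tfrac{3\pi^2}{2})$ and $\p$ near $\p_\circ$.

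First take $\p = \p_\circ$. Choose the columns of $W$ to be $M$-orthonormal eigenvectors for $\lambda_3,\lambda_4,\lambda_5$. Then
\[
W^\top B W = \mathrm{diag}(\lambda_j(\p_\circ) - \lambda), \qquad j = 3,4,5.
\]
By hypothesis (iii), each $\lambda_j(\p_\circ) > \tfrac{3\pi^2}{2}$, so this matrix is positive definite. It is therefore invertible, with a uniform margin on the closed interval $\lambda \in [\tfrac{\pi^2}{2},\tfrac{3\pi^2}{2}]$.

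A different choice of $M$-orthonormal frame for the same complement changes $W^\top B W$ only by an orthogonal congruence, so definiteness does not depend on the choice.

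Now let $\p$ vary. The map $\p \mapsto W^\top B(\lambda;\p) W$ is continuous, in fact analytic, jointly in $(\lambda,\p)$. Compactness of the $\lambda$-interval then yields a neighborhood of $\p_\circ$ on which $W^\top B W \succ 0$ for every $\lambda$ in the interval.

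(On signs: $B = K - \lambda M$ is positive on the high modes for $\lambda$ below $\tfrac{3\pi^2}{2}$, so the complementary block is positive, not negative, definite. Either sign suffices for invertibility.)

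\textbf{Step 3: counting roots.} By hypothesis (iii), for $\p \in U$ exactly two generalized eigenvalues, $\lambda_1(\p)$ and $\lambda_2(\p)$, lie in the interval. By Step 1, these are then exactly the roots of $\det B^{\mathrm{eff}}(\lambda;\p) = 0$ in the interval, counted with multiplicity. Multiplicity is preserved because $\det B(\lambda;\p) = \det M(\p)\prod_j(\lambda_j - \lambda)$ and the factor $\det(W^\top B W)$ does not vanish there. It follows that $\Sigma$ and $\Pi$ are the sum and product of these two roots.

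\textbf{Main obstacle.} The main point requiring care is the uniformity in Step 2. The fixed frame $W$ is the correct complement only at $\p_\circ$, so nondegeneracy of the reduced block at nearby $\p$ must come from a continuity or compactness margin, not from (iii) directly. This is why the lemma is stated only for $\p$ near $\p_\circ$.
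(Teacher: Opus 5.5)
Your proposal is correct and follows essentially the same route as the paper: invertibility of $[\,V \mid W\,]$, the diagonal form $\mathrm{diag}(\lambda_j(\p_\circ)-\lambda)_{j=3,4,5}$ of the complementary block at $\p_\circ$, persistence of invertibility for nearby $\p$ by continuity, and the Schur determinant identity $\det([\,V\mid W\,]^\top B\,[\,V\mid W\,]) = \det(W^\top B W)\det B^{\mathrm{eff}}$. Two of your steps are more careful than the paper's: the uniformity over the closed $\lambda$-interval, and the multiplicity count via $\det B = \det M \prod_j(\lambda_j-\lambda)$, which covers the double root at the square. Fix the opening claim of Step 2 to say ``positive definite'' rather than ``negative definite,'' as you yourself note at the end of that step.
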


\begin{proof}
This is the classical Schur complement of a (degenerate) eigenvalue problem~\cite{Kato1995},~\cite[\S 6.3]{StewartSun1990}. As $[\,V \mid W\,]$ is invertible, $\lambda$ is a generalized eigenvalue of the pencil $(K(\p), M(\p))$ iff $[\,V \mid W\,]^\top B(\lambda; \p)\,[\,V \mid W\,]$ is singular. At $\p_\circ$, the lower-right block $W^\top B(\lambda; \p_\circ)\, W$ is
$\mathrm{diag}(\lambda_3 - \lambda, \lambda_4 - \lambda, \lambda_5 - \lambda)$ in the
eigenbasis there, which is invertible for $\lambda \in \left(\tfrac{\pi^2}{2}, \tfrac{3\pi^2}{2}\right)$
because $\lambda_m > \tfrac{3\pi^2}{2}$ for $m \ge 3$. By continuity, invertibility
persists for $\p$ near $\p_\circ$. The identity
$\det\!\bigl([\,V \mid W\,]^\top B\,[\,V \mid W\,]\bigr) = \det(W^\top B\, W)\,\det B^{\mathrm{eff}}(\lambda; \p)$
then makes singularity equivalent to $\det B^{\mathrm{eff}}(\lambda; \p) = 0$, and the two such
$\lambda \in \left(\tfrac{\pi^2}{2}, \tfrac{3\pi^2}{2}\right)$ are $\lambda_1(\p), \lambda_2(\p)$.
\end{proof}

Fix $\e \in S^3 := \{\e \in \mathbb R^4 \colon \|\e\| = 1\}$ and write $\p = t\e$. It is convenient to normalize the pencil by $E := M(\bzero)^{-1/2} = \mathrm{diag}\bigl(\sqrt2, \sqrt2, 2, \sqrt2, \sqrt2\bigr)$:
\[
\widetilde K(t) := E\, K(t\e)\, E, \qquad \widetilde M(t) := E\, M(t\e)\, E,
\]
so that $\widetilde M(0) = I_5$ and $\widetilde K(0) = \mathrm{diag}\bigl(\pi^2, \pi^2, 2\pi^2, 4\pi^2, 4\pi^2\bigr)$, while the generalized eigenvalues $\lambda_1(t\e) \le \cdots \le \lambda_5(t\e)$ are unchanged. Expand
\[
\widetilde K(t) = \widetilde K_0 + t\, \widetilde K_1 + t^2\, \widetilde K_2 + O(t^3), \qquad
\widetilde M(t) = I_5 + t\, \widetilde M_1 + t^2\, \widetilde M_2 + O(t^3),
\]
so that, in terms of the coefficients of \eqref{e:KM-expansion}, $\widetilde K_1 = E \bigl(\sum_x e_x\, K^{(1)}_x\bigr) E$, and along a coordinate direction $\e = e_x$ the second-order coefficient is $\widetilde K_2 = \tfrac{1}{2}\, E K^{(2)}_{xx} E$ (likewise for $M$). Set
\[
\widetilde B_1 := \widetilde K_1 - \pi^2\, \widetilde M_1,
\]
write $X|_{V_0}$, $X|_{V_0, V_0^\perp}$, $X|_{V_0^\perp}$ for the blocks of a $5 \times 5$ matrix $X$ with respect to the splitting $V_0 \oplus V_0^\perp$ of Section~\ref{app:integral-rep}, and let $\kappa_{(1,1)} = 2\pi^2$ and $\kappa_{(2,0)} = \kappa_{(0,2)} = 4\pi^2$ denote the unperturbed eigenvalues on $V_0^\perp$, so that
\[
D_\perp := \bigl(\widetilde K_0 - \pi^2 I_5\bigr)\big|_{V_0^\perp} = \mathrm{diag}\bigl(\kappa_m - \pi^2\bigr)_{m \in V_0^\perp} = \mathrm{diag}\bigl(\pi^2, 3\pi^2, 3\pi^2\bigr) \;\succ\; 0.
\]
(Thus $E = [\, E_0 \mid E_\perp \,]$ for the $M(\bzero)$-orthonormal frames $E_0, E_\perp$ of Appendix~\ref{app:diff-quotient}.)

\begin{lemma}[Cluster-sum expansion at the square]\label{l:cluster-sum}
For every $\e \in S^3$ we have $\tr\bigl(\widetilde B_1|_{V_0}\bigr) = 0$ and, as $t \to 0$,
\begin{equation}\label{e:cluster-sum}
\frac{1}{\lambda_1(t\e)} + \frac{1}{\lambda_2(t\e)}
\;=\; \frac{2}{\pi^2} \;+\; \bigl(\Pi_0(\e) + \Sigma_\perp(\e)\bigr)\, t^2 \;+\; O(t^3),
\end{equation}
where
\begin{align*}
\pi^6\, \Pi_0(\e) &:= \tr\bigl((\widetilde K_1|_{V_0})^2\bigr) \;-\; \pi^2\, \tr\bigl(\widetilde K_1|_{V_0}\, \widetilde M_1|_{V_0}\bigr) \;-\; \pi^2\, \tr\bigl(\widetilde K_2|_{V_0}\bigr) \;+\; \pi^4\, \tr\bigl(\widetilde M_2|_{V_0}\bigr), \\
\pi^6\, \Sigma_\perp(\e) &:= \sum_{k \in V_0} \sum_{m \in V_0^\perp} \frac{\pi^2}{\kappa_m - \pi^2}\, \bigl(\widetilde B_1\bigr)_{k, m}^2 \;\;\ge\;\; 0,
\end{align*}
and the remainder in \eqref{e:cluster-sum} is uniform for $\e \in S^3$.
\end{lemma}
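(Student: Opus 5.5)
The plan is to apply the Schur-complement reduction of Lemma~\ref{l:reduced-resolvent} at $\p_\circ=\bzero$, in the normalized pencil $(\widetilde K(t),\widetilde M(t))$. At $t=0$ the $M$-orthonormal frames are just the coordinate frames of $V_0$ and $V_0^\perp$, so I take $V,W$ to be these. The expected obstacle is keeping track of which $s$- and $t$-dependent terms actually contribute at order $t^2$.

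\textbf{Trace identity.} I first check $\tr(\widetilde B_1|_{V_0})=0$ by linearity in $\e$, using Table~\ref{tab:first-order}.
In direction $a$, scaling by $E$ gives $\widetilde K^{(1)}_a|_{V_0}=\mathrm{diag}(\pi^2,-\pi^2)$ and $\widetilde M^{(1)}_a|_{V_0}=\mathrm{diag}(-1,1)$. Hence $\widetilde B_1|_{V_0}=\mathrm{diag}(2\pi^2,-2\pi^2)$, which is traceless.
In directions $b$ and $c$ the $V_0$-block vanishes. In direction $d$ it is purely off-diagonal.
Alternatively, this is the vanishing of the gradient forced by the $D_4$-symmetry.

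\textbf{Effective $2\times 2$ problem.} Write $\lambda=\pi^2+s$. For $|t|$ small, Proposition~\ref{l:M3-bound} and Weyl-type continuity of the isolated cluster give $s_i=\lambda_i(t\e)-\pi^2=O(t)$, uniformly in $\e\in S^3$. In the normalized coordinates the $V_0$-block of $B$ is
\[
-s\,(I+t\widetilde M_1)|_{V_0}+t\,\widetilde B_1|_{V_0}+t^2\bigl(\widetilde K_2-\pi^2\widetilde M_2\bigr)|_{V_0}+O(t^3+st^2).
\]
The off-diagonal block is $t\,\widetilde B_1|_{V_0,V_0^\perp}+O(st+t^2)$, and the perpendicular block is $D_\perp+O(s+t)$.
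Inserting these into $B^{\mathrm{eff}}$, the roots $s_1,s_2$ are, up to $O(t^3)$, the eigenvalues of the $2\times2$ pencil $\bigl(tB_1+t^2C,\;I+t\widetilde M_1|_{V_0}\bigr)$, where
\[
C:=(\widetilde K_2-\pi^2\widetilde M_2)|_{V_0}-\widetilde B_1|_{V_0,V_0^\perp}D_\perp^{-1}\widetilde B_1|_{V_0^\perp,V_0}.
\]
The $s$-dependence of $S_\perp$ and of the off-diagonal blocks only enters at $O(t^3)$, because those blocks are already $O(t)$ and $s=O(t)$.
Consequently
\[
s_1+s_2=t\,\tr B_1+t^2\bigl(\tr C-\tr(\widetilde M_1 B_1)\bigr)+O(t^3),\qquad s_1^2+s_2^2=t^2\tr(B_1^2)+O(t^3),
\]
with all blocks taken on $V_0$.

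\textbf{Assembly.} Expand
\[
\sum_i\frac{1}{\pi^2+s_i}=\frac{2}{\pi^2}-\frac{s_1+s_2}{\pi^4}+\frac{s_1^2+s_2^2}{\pi^6}+O(t^3).
\]
Use $\tr B_1=0$ and expand $\tr(B_1^2)=\tr\widetilde K_1^2-2\pi^2\tr(\widetilde K_1\widetilde M_1)+\pi^4\tr\widetilde M_1^2$.
The term $-\pi^2\bigl(\tr C-\tr(\widetilde M_1B_1)\bigr)$ cancels one copy of $\pi^2\tr(\widetilde K_1\widetilde M_1)$ and the $\pi^4\tr\widetilde M_1^2$ term. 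What remains is $\pi^6\Pi_0(\e)$, together with the reduced-resolvent contribution
\[
\pi^2\sum_{k,m}(\widetilde B_1)_{k,m}^2/(\kappa_m-\pi^2)=\pi^6\Sigma_\perp(\e)\ge0.
\]

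\textbf{Uniformity.} The pencil entries are real-analytic in $t\e$, and their coefficients are polynomial in $\e$. The cluster gap and the inverse of $D_\perp+O(s+t)$ are bounded uniformly on the compact sphere $S^3$ for $|t|$ small, by Proposition~\ref{l:M3-bound}. Hence all $O(t^3)$ constants above can be taken independent of $\e$.
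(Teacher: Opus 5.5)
Your proposal is correct and follows the paper's proof essentially step by step. Both arguments apply the Schur complement at $\p_\circ=\bzero$ with the coordinate frames of $V_0$ and $V_0^\perp$ in the normalized pencil, pass to the effective $2\times 2$ pencil $\bigl(t\widetilde B_1|_{V_0}+t^2G_2,\ I_2+t\widetilde M_1|_{V_0}\bigr)$ (your $C$ is the paper's $G_2$), expand $\sum_i(\pi^2+s_i)^{-1}$ to second order, and use the same cancellation of the $\tr\bigl((\widetilde M_1|_{V_0})^2\bigr)$ terms. The one difference is minor: for $s_i=O(t)$ and isolation of the cluster you cite the computer-assisted Proposition~\ref{l:M3-bound}, whereas the paper uses only min--max and Lipschitz dependence of the pencil near $t=0$, which is all this purely asymptotic statement needs.
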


\begin{proof}
Write $\lambda_i(t\e) = \pi^2 + \nu_i(t)$, $i = 1, 2$. Since $\widetilde M(t) \succ 0$ for small $t$ and the entries of the pencil are Lipschitz in $t$, the min--max characterization gives $\nu_i(t) = O(t)$, uniformly in $\e$, and the cluster $\{\lambda_1, \lambda_2\}$ remains isolated from the three eigenvalues near $2\pi^2, 4\pi^2, 4\pi^2$.

Let $B(t, \nu) := \widetilde K(t) - (\pi^2 + \nu)\, \widetilde M(t)$. Since $B(0, 0)|_{V_0} = 0$, $B(0, 0)|_{V_0, V_0^\perp} = 0$, and $B(0, 0)|_{V_0^\perp} = D_\perp$ is invertible, Lemma~\ref{l:reduced-resolvent} (applied at $\p_\circ = \bzero$, with the frames $E_0, E_\perp$ above) shows that, for $(t, \nu)$ near $(0, 0)$, $\pi^2 + \nu$ is a cluster eigenvalue if and only if the $2 \times 2$ Schur complement
\[
B^{\mathrm{eff}}(t, \nu) \;:=\; B|_{V_0} \;-\; B|_{V_0, V_0^\perp}\, \bigl(B|_{V_0^\perp}\bigr)^{-1} B|_{V_0^\perp\!, V_0}
\]
is singular. Substituting the expansions of $\widetilde K, \widetilde M$, on the region $\nu = O(t)$ we obtain
\[
\begin{aligned}
B^{\mathrm{eff}}(t, \nu) &\;=\; t\, \widetilde B_1|_{V_0} + t^2\, G_2 - \nu\,\bigl(I_2 + t\, \widetilde M_1|_{V_0}\bigr) + O(t^3), \\
G_2 &\;:=\; \bigl(\widetilde K_2 - \pi^2 \widetilde M_2\bigr)\big|_{V_0} - \widetilde B_1|_{V_0, V_0^\perp}\, D_\perp^{-1}\, \widetilde B_1|_{V_0^\perp\!, V_0}.
\end{aligned}
\]
Hence $\nu_1(t), \nu_2(t)$ are, up to $O(t^3)$, the two generalized eigenvalues of the symmetric-definite $2 \times 2$ pencil $\bigl(t\, \widetilde B_1|_{V_0} + t^2 G_2,\; I_2 + t\, \widetilde M_1|_{V_0}\bigr)$; conjugating by $(I_2 + t\, \widetilde M_1|_{V_0})^{-1/2} = I_2 - \tfrac{t}{2}\, \widetilde M_1|_{V_0} + O(t^2)$, these are the eigenvalues of the symmetric matrix
\[
t\, \widetilde B_1|_{V_0} \;+\; t^2 \Bigl( G_2 - \tfrac{1}{2}\bigl(\widetilde M_1 \widetilde B_1 + \widetilde B_1 \widetilde M_1\bigr)\big|_{V_0} \Bigr) \;+\; O(t^3),
\]
whence
\[
\nu_1 + \nu_2 = t\, \tr\bigl(\widetilde B_1|_{V_0}\bigr) + t^2 \Bigl( \tr G_2 - \tr\bigl(\widetilde M_1|_{V_0}\, \widetilde B_1|_{V_0}\bigr) \Bigr) + O(t^3),
\qquad
\nu_1^2 + \nu_2^2 = t^2\, \tr\bigl((\widetilde B_1|_{V_0})^2\bigr) + O(t^3).
\]
By inspection of Table~\ref{tab:first-order}, each $K^{(1)}_x$ and $M^{(1)}_x$ has zero trace on $V_0$, so $\tr\bigl(\widetilde B_1|_{V_0}\bigr) = 0$ for every $\e$; in coordinates, this recovers the $D_4$-symmetry statement $\nabla f_N(\bzero) = \bzero$ of Section~\ref{s:RayleighRitz}. Therefore
\[
\frac{1}{\lambda_1} + \frac{1}{\lambda_2}
= \sum_{i = 1, 2} \Bigl( \frac{1}{\pi^2} - \frac{\nu_i}{\pi^4} + \frac{\nu_i^2}{\pi^6} \Bigr) + O(t^3)
= \frac{2}{\pi^2} + t^2 \biggl( \frac{\tr\bigl((\widetilde B_1|_{V_0})^2\bigr)}{\pi^6} - \frac{\tr G_2 - \tr\bigl(\widetilde M_1|_{V_0}\, \widetilde B_1|_{V_0}\bigr)}{\pi^4} \biggr) + O(t^3).
\]
Expanding $\widetilde B_1 = \widetilde K_1 - \pi^2 \widetilde M_1$, the terms in $\tr\bigl((\widetilde M_1|_{V_0})^2\bigr)$ cancel, leaving
\[
\frac{\tr\bigl((\widetilde B_1|_{V_0})^2\bigr)}{\pi^6} + \frac{\tr\bigl(\widetilde M_1|_{V_0}\, \widetilde B_1|_{V_0}\bigr)}{\pi^4}
= \frac{\tr\bigl((\widetilde K_1|_{V_0})^2\bigr) - \pi^2\, \tr\bigl(\widetilde K_1|_{V_0}\, \widetilde M_1|_{V_0}\bigr)}{\pi^6};
\]
the coupling part of $-\tr G_2 / \pi^4$ contributes $\tr\bigl(\widetilde B_1|_{V_0, V_0^\perp} D_\perp^{-1} \widetilde B_1|_{V_0^\perp\!, V_0}\bigr)/\pi^4 = \Sigma_\perp(\e)$, and its block part contributes the last two traces of $\Pi_0$. All estimates are uniform on the compact set $S^3$.
\end{proof}

\begin{corollary}[Closed-form Hessian]\label{c:closed-form-hessian}
$H_N = \Hess_\p f_N(\bzero)$ is diagonal in $(e_a, e_b, e_c, e_d)$, with $H_{bb}^N = H_{cc}^N$ and
\[
H_1 = \mathrm{diag}\!\left(\tfrac{2}{\pi^2},\, -\tfrac{32}{\pi^6},\, -\tfrac{32}{\pi^6},\, \tfrac{2}{\pi^2}\right), \qquad
H_2 = \mathrm{diag}\!\left(\tfrac{2}{\pi^2},\, \tfrac{3232}{27\pi^6},\, \tfrac{3232}{27\pi^6},\, \tfrac{2}{\pi^2}\right).
\]
In particular $H_1$ is indefinite, while $H_2 \succ 0$ with $\lambda_{\min}(H_2) = \tfrac{3232}{27\pi^6}$.
\end{corollary}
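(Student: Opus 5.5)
The plan is to read off the Hessian from the second-order ray expansion of Lemma~\ref{l:cluster-sum}, combined with the area formula \eqref{e:Area}. Diagonality in $(e_a,e_b,e_c,e_d)$ and the equality $H^N_{bb}=H^N_{cc}$ are already established by the $D_4$-symmetry argument of Section~\ref{s:RayleighRitz}. So it suffices to compute $\e^\top H_N\e$ for $\e\in\{e_a,e_b,e_d\}$.

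\textbf{Step 1: reduce the Hessian to $\Pi_0$, $\Sigma_\perp$ and the area.} Along $\p=t\e$ we have $|Q_{t\e}|^{-1}=1+t^2(e_a^2+e_d^2)+O(t^4)$. Together with $f_N=\frac{1}{2|Q_\p|}(\lambda_1^{-1}+\lambda_2^{-1})$ and \eqref{e:cluster-sum}, this gives
\[
f_N(t\e)=\tfrac{1}{\pi^2}+t^2\Bigl(\tfrac{e_a^2+e_d^2}{\pi^2}+\tfrac12\bigl(\Pi_0(\e)+\Sigma_\perp(\e)\bigr)\Bigr)+O(t^3),
\]
and hence
\[
\e^\top H_N\e=\tfrac{2(e_a^2+e_d^2)}{\pi^2}+\Pi_0(\e)+\Sigma_\perp(\e).
\]
For $N=1$ the complement $V_0^\perp$ is empty, so $\Sigma_\perp=0$. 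For $N=2$, $\Sigma_\perp$ is the sum over the three complementary modes.

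\textbf{Step 2: evaluate the coordinate directions from Tables~\ref{tab:first-order}--\ref{tab:second-order-pure}.} All matrices are scaled by $E=\mathrm{diag}(\sqrt2,\sqrt2,2,\sqrt2,\sqrt2)$, and along $e_x$ we use $\widetilde K_2=\tfrac12EK^{(2)}_{xx}E$ (likewise for $M$).

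For $x=a$:
\begin{itemize}
\item $\widetilde K_1|_{V_0}=\mathrm{diag}(\pi^2,-\pi^2)$ and $\widetilde M_1|_{V_0}=\mathrm{diag}(-1,1)$, so $\tr(\widetilde K_1|_{V_0})^2=2\pi^4$ and $-\pi^2\tr(\widetilde K_1\widetilde M_1)|_{V_0}=2\pi^4$.
\item $\widetilde K_2=0$ and $\tr\widetilde M_2|_{V_0}=-4$, so the last term of $\pi^6\Pi_0$ is $-4\pi^4$.
\item Hence $\Pi_0(e_a)=0$. Since the first-order matrices are diagonal, there is no coupling and $\Sigma_\perp(e_a)=0$.
\item Therefore $H_{aa}=2/\pi^2$ for both $N$.
\end{itemize}

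For $x=b$:
\begin{itemize}
\item The $V_0$ blocks of $\widetilde K_1$, $\widetilde M_1$ and $\widetilde K_2$ vanish, and $\widetilde M_2|_{V_0}$ has the single entry $-32/\pi^4$. This gives $\Pi_0(e_b)=-32/\pi^6$, which is the $N=1$ value.
\item For $N=2$ the couplings of $\widetilde B_1=\widetilde K_1-\pi^2\widetilde M_1$ are $-8\sqrt2$ on $((1,0),(1,1))$ (weight $1$), $-8$ on $((0,1),(2,0))$ (weight $\tfrac13$), and $\tfrac{64}{9}-\tfrac{40}{9}=\tfrac83$ on $((0,1),(0,2))$ (weight $\tfrac13$).
\item Summing the weighted squares, $\pi^6\Sigma_\perp(e_b)=128+\tfrac{64}{3}+\tfrac{64}{27}=\tfrac{4096}{27}$.
\item Therefore $H^2_{bb}=(-32+\tfrac{4096}{27})/\pi^6=\tfrac{3232}{27\pi^6}$.
\end{itemize}

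For $x=d$ the same bookkeeping applies:
\begin{itemize}
\item The $V_0$ block of $\widetilde M_1$ is off-diagonal, with entry $-16/\pi^2$ after scaling, and $\widetilde K_1|_{V_0}=0$.
\item The second-order contributions on $V_0$ come from $K^{(2)}_{dd}$ and $M^{(2)}_{dd}$.
\item The couplings between $V_0$ and $V_0^\perp$ vanish, because $K^{(1)}_d$ and $M^{(1)}_d$ couple $V_0$ only internally.
\end{itemize}
I expect the first three terms of $\pi^6\Pi_0$ to cancel the last, giving $\Pi_0(e_d)=\Sigma_\perp(e_d)=0$ and $H_{dd}=2/\pi^2$ for both $N$; this cancellation still has to be confirmed by the computation.

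\textbf{Step 3: conclude.} $H_1$ has the negative entries $-32/\pi^6$, so it is indefinite. For $H_2$, compare $\tfrac{3232}{27\pi^6}\approx0.1245$ with $\tfrac{2}{\pi^2}\approx0.2026$. This can be done exactly, since it is equivalent to $3232<54\pi^4$, and $\pi^4>97$ suffices. It follows that $H_2\succ0$ with $\lambda_{\min}(H_2)=\tfrac{3232}{27\pi^6}$.

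The main obstacle is the bookkeeping in Step~2, particularly the $d$-direction cancellation and the scaling factors $E$ on the mixed $(1,1)$ mode. A sign or factor-of-two error there would change the entries. I would cross-check these values against a floating-point finite-difference Hessian of $f_2$ computed from the integral representation of Section~\ref{app:integral-rep}.
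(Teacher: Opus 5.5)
Your proposal is correct and follows the paper's proof step for step. You combine Lemma~\ref{l:cluster-sum} with $|Q_{t\e}|^{-1}=1+t^2(e_a^2+e_d^2)+O(t^4)$ to get $\e^\top H_N\e=\tfrac{2(e_a^2+e_d^2)}{\pi^2}+\Pi_0(\e)+\Sigma_\perp(\e)$, then read off $e_a,e_b,e_d$ from the tables. The paper also records that $f_N$ is real-analytic near $\bzero$, which is what justifies identifying the $t^2$-coefficient with the Hessian.

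The $d$-direction cancellation you left pending does hold. Since $\widetilde K_1|_{V_0}=0$, the first two terms vanish, and $\tr(\widetilde K_2|_{V_0})=-2\pi^2$ together with $\tr(\widetilde M_2|_{V_0})=-2$ gives $\pi^6\Pi_0(e_d)=2\pi^4-2\pi^4=0$.
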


\begin{proof}
The hypotheses of Lemma~\ref{l:f2-smooth} are open conditions and hold at $\p = \bzero$, where the spectrum of the pencil is $\pi^2$, $\pi^2$, $2\pi^2$, $4\pi^2$, $4\pi^2$; hence $f_2$ is real-analytic on a neighborhood of $\bzero$. (For $N = 1$ the cluster is the entire spectrum of the $2 \times 2$ pencil, its symmetric functions are rational in the entries, and $f_1$ is likewise real-analytic near $\bzero$; the argument below applies verbatim with empty $V_0^\perp$-block and $\Sigma_\perp \equiv 0$.) Combining Lemma~\ref{l:cluster-sum} with $|Q_{t\e}|^{-1} = 1 + t^2 (e_a^2 + e_d^2) + O(t^4)$,
\[
f_N(t\e) \;=\; \frac{1}{2\,|Q_{t\e}|} \Bigl( \frac{1}{\lambda_1(t\e)} + \frac{1}{\lambda_2(t\e)} \Bigr)
\;=\; \frac{1}{\pi^2} \;+\; \frac{t^2}{2} \biggl( \frac{2\,(e_a^2 + e_d^2)}{\pi^2} + \Pi_0(\e) + \Sigma_\perp(\e) \biggr) \;+\; O(t^3),
\]
and comparison with the Taylor expansion of $f_N$ at $\bzero$, whose gradient vanishes (Section~\ref{s:RayleighRitz}), identifies the quadratic form:
\begin{equation}
\label{e:HessianFormula}
\e^\top H_N\, \e \;=\; \frac{2\,(e_a^2 + e_d^2)}{\pi^2} \;+\; \Pi_0(\e) \;+\; \Sigma_\perp(\e).
\end{equation}
By the $D_4$-symmetry argument of Section~\ref{s:RayleighRitz}, $H_N$ is diagonal with $H_{bb}^N = H_{cc}^N$, so it suffices to evaluate \eqref{e:HessianFormula} at $\e = e_a, e_b, e_d$, reading the normalized blocks off Tables~\ref{tab:first-order} and~\ref{tab:second-order-pure}; recall $E = \mathrm{diag}(\sqrt2, \sqrt2, 2, \sqrt2, \sqrt2)$ and $\widetilde K_2 = \tfrac{1}{2} E K^{(2)}_{xx} E$ along $\e = e_x$.

\emph{Direction $a$.} All four matrices are diagonal, with $\widetilde K_1 = \mathrm{diag}(\pi^2, -\pi^2, 0, 4\pi^2, -4\pi^2)$ and $\widetilde M_1 = \mathrm{diag}(-1, 1, 0, -1, 1)$, while $\widetilde K_2 = 0$ and $\widetilde M_2|_{V_0} = \mathrm{diag}(-2, -2)$. In particular $\widetilde B_1$ has no coupling block, so $\Sigma_\perp = 0$, and
\[
\pi^6\, \Pi_0 \;=\; 2\pi^4 \;-\; \pi^2 \cdot (-2\pi^2) \;-\; 0 \;+\; \pi^4 \cdot (-4) \;=\; 0.
\]
Hence $H_{aa}^N = \tfrac{2}{\pi^2}$: the entire contribution comes from the area factor.

\emph{Direction $d$.} Here $\widetilde K_1|_{V_0} = 0$, and $\widetilde M_1|_{V_0}$ is off-diagonal with entry $-\tfrac{16}{\pi^2}$; the remaining nonzero entries of $\widetilde B_1$ pair modes within $V_0^\perp$, so again $\Sigma_\perp = 0$. With $\tr\bigl(\widetilde K_2|_{V_0}\bigr) = -2\pi^2$ and $\tr\bigl(\widetilde M_2|_{V_0}\bigr) = -2$,
\[
\pi^6\, \Pi_0 \;=\; 0 \;-\; 0 \;+\; 2\pi^4 \;-\; 2\pi^4 \;=\; 0,
\qquad \text{so} \qquad H_{dd}^N = \tfrac{2}{\pi^2}.
\]

\emph{Direction $b$.} Now $\widetilde K_1|_{V_0} = \widetilde M_1|_{V_0} = 0$ and $\widetilde K_2 = 0$; the only nonzero cluster-block datum is $\tr\bigl(\widetilde M_2|_{V_0}\bigr) = -\tfrac{32}{\pi^4}$, so $\pi^6 \Pi_0 = -32$. The coupling entries of $\widetilde B_1$ are
\[
\bigl(\widetilde B_1\bigr)_{(1,0), (1,1)} = -8\sqrt2, \qquad
\bigl(\widetilde B_1\bigr)_{(0,1), (2,0)} = -8, \qquad
\bigl(\widetilde B_1\bigr)_{(0,1), (0,2)} = \tfrac{64}{9} - \tfrac{40}{9} = \tfrac{8}{3},
\]
with gap weights $\tfrac{\pi^2}{\kappa_m - \pi^2} = 1, \tfrac{1}{3}, \tfrac{1}{3}$ respectively, so that
\[
\pi^6\, \Sigma_\perp \;=\; 128 + \tfrac{64}{3} + \tfrac{64}{27} \;=\; \tfrac{4096}{27},
\qquad
H_{bb}^2 \;=\; \Pi_0 + \Sigma_\perp \;=\; \frac{-864 + 4096}{27\, \pi^6} \;=\; \frac{3232}{27 \pi^6}.
\]

Finally, for $N = 1$ the cluster blocks (hence $\Pi_0$) are unchanged while $\Sigma_\perp \equiv 0$, giving $H_{bb}^1 = -\tfrac{32}{\pi^6}$ and the same $a, d$ entries.
\end{proof}

\subsection{Local maximality}\label{ss:local-max}

\begin{theorem}\label{t:LocalMin}
For $N = 2$, the point $\p = \bzero$ is a strict local minimum of $f_2$. Consequently, there exists $\rho > 0$ such that the square $\square = Q_\bzero$ is the unique maximizer of $|Q_\p|\mu_1(Q_\p)$ on $\overline{B(\bzero, \rho)} \cap \P$.
\end{theorem}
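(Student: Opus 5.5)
The plan is to combine the analyticity of $f_2$ near the square, the vanishing gradient, and the positive-definite Hessian into a second-order Taylor argument, and then transfer the conclusion to $\mu_1$ via the chain \eqref{e:KeyInequality}.

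\emph{Step 1: regularity.} Apply Lemma~\ref{l:f2-smooth} on a neighborhood $U$ of $\bzero$. At $\p = \bzero$ the pencil has spectrum $\pi^2,\pi^2,2\pi^2,4\pi^2,4\pi^2$ and $M(\bzero) \succ 0$. Every $c_i(\bzero) = 1 > 0$, so hypotheses (i)--(iii) hold at $\bzero$. These are open conditions, by continuity of the entries of $K(\p), M(\p)$ (Section~\ref{app:integral-rep}) and continuity of generalized eigenvalues of a definite pencil. Hence $f_2$ is real-analytic, and in particular $C^3$, on some ball $B(\bzero, r_0) \subset \operatorname{int}\P$. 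Alternatively one can quote Proposition~\ref{l:M3-bound}(b), but the qualitative statement does not need the certified margins.

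\emph{Step 2: Taylor expansion.} By the $D_4$-invariance argument of Section~\ref{s:RayleighRitz}, $\nabla f_2(\bzero) = \bzero$. Corollary~\ref{c:closed-form-hessian} gives $H_2 \succ 0$ with $\lambda_{\min}(H_2) = \tfrac{3232}{27\pi^6}$. Taylor's theorem with a uniform bound $C$ on the third derivatives over $\overline{B(\bzero, r_0/2)}$ yields
\[
f_2(\p) \;\ge\; \tfrac{1}{\pi^2} + \tfrac12 \lambda_{\min}(H_2)\,\|\p\|^2 - C\|\p\|^3 .
\]
This is strictly greater than $\tfrac{1}{\pi^2} = f_2(\bzero)$ for $0 < \|\p\| \le \rho$, where $\rho < \min\{r_0/2,\ \lambda_{\min}(H_2)/(2C)\}$. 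This proves that $\bzero$ is a strict local minimum of $f_2$. The same conclusion can also be read off directly from the uniform-in-$\e$ expansion \eqref{e:cluster-sum} together with the expansion of $|Q_{t\e}|^{-1}$. That expansion gives $f_2(t\e) = \tfrac{1}{\pi^2} + \tfrac{t^2}{2}\e^\top H_2\e + O(t^3)$ with a remainder uniform over $S^3$, which is the only point requiring care.

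\emph{Step 3: transfer to $\mu_1$.} For $\p \in \overline{B(\bzero,\rho)} \cap \P$ with $\p \ne \bzero$, \eqref{e:KeyInequality} gives
\[
\tfrac{1}{|Q_\p|\mu_1(Q_\p)} \;\ge\; f_2(\p) \;>\; \tfrac{1}{\pi^2},
\]
so $|Q_\p|\mu_1(Q_\p) < \pi^2 = |\square|\mu_1(\square)$. Hence the square is the unique maximizer on that set.

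The main obstacle is the uniformity of the remainder, i.e.\ justifying that $f_2$ is genuinely $C^2$ (indeed $C^3$) at the degenerate point rather than merely admitting directional expansions. This is exactly what the contour representation of $\Sigma, \Pi$ in Lemma~\ref{l:f2-smooth} supplies, so I would lean on it rather than on eigenvalue-branch perturbation theory.
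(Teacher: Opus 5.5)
Your proof is correct and follows the paper's argument: analyticity of $f_2$ near $\bzero$, $\nabla f_2(\bzero)=\bzero$ by the $D_4$-symmetry, $H_2\succ 0$ from Corollary~\ref{c:closed-form-hessian}, Taylor's theorem, and transfer to $\mu_1$ via \eqref{e:KeyInequality}. The only minor difference is that you obtain analyticity from the openness at $\bzero$ of the hypotheses of Lemma~\ref{l:f2-smooth}, as the paper itself does in the proof of Corollary~\ref{c:closed-form-hessian}, rather than citing the certified Proposition~\ref{l:M3-bound}(b); the certified margins are indeed unnecessary for this qualitative statement.
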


\begin{proof}
The function $\p \mapsto f_2(\p)$ is real-analytic on a neighborhood of $\p = \bzero$ by Proposition~\ref{l:M3-bound}(b).

By Taylor's theorem, it suffices to verify
 (i) $\nabla f_2(\bzero) = \bzero$, and
(ii) $H_2 := \Hess f_2(\bzero) \succ 0$.
The gradient vanishes (i) by the $D_4$-symmetry argument of Section~\ref{s:RayleighRitz}, and (ii) holds by Corollary~\ref{c:closed-form-hessian}, with $\lambda_{\min}(H_2) = \tfrac{3232}{27\pi^6} > 0$. Combined with \eqref{e:KeyInequality}, this yields the stated conclusion.
\end{proof}

For the global proof of Section~\ref{s:GlobalMax}, it is convenient to have an explicit value for the radius $\rho$ in Theorem~\ref{t:LocalMin}. Rather than transport the central Hessian outward with a bound on the third derivative of $f_2$, or enclose the Hessian field over the ball, we certify the inequality $f_2 > \pi^{-2}$ directly, by a second-order difference-quotient test along rays through the square (Appendix~\ref{app:diff-quotient}). In particular, the closed-form Hessian enters the certified chain only through the explicit number $\rho^\sharp$.

\begin{corollary}\label{t:LocalMinExplicit}
The conclusion of Theorem~\ref{t:LocalMin} holds with the explicit radius $\rho = \rho^\sharp := \lambda_{\min}(H_2) = \tfrac{3232}{27\pi^6} \approx 0.1245$.
\end{corollary}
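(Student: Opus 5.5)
The statement is that $f_2(\p) > \pi^{-2}$ for every $\p \in \overline{B(\bzero,\rho^\sharp)}\setminus\{\bzero\}$. By \eqref{e:KeyInequality} this gives $|Q_\p|\mu_1(Q_\p) < \pi^2$ there, with equality at the square. I would prove it ray by ray: write $\p = t\e$ with $\e \in S^3$ and $0 < t \le \rho^\sharp$, and set $\phi_\e(t) := f_2(t\e)$. By Proposition~\ref{l:M3-bound}(b), $\phi_\e$ is real-analytic on $(-\rho_\circ,\rho_\circ)$, with $\phi_\e(0) = \pi^{-2}$ and $\phi_\e'(0) = 0$ (the gradient vanishes by $D_4$-symmetry). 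The quantity to control is the second-order difference quotient
\[
q(t,\e) \;:=\; \frac{2\bigl(\phi_\e(t) - \pi^{-2}\bigr)}{t^2},
\]
which extends continuously to $t = 0$ with value $\e^\top H_2 \e \ge \lambda_{\min}(H_2) = \rho^\sharp$ by Corollary~\ref{c:closed-form-hessian}. It therefore suffices to certify $q(t,\e) > 0$ on the compact set $[0,\rho^\sharp]\times S^3$. The choice of radius equal to $\lambda_{\min}(H_2)$ is presumably arranged so that one has the crude a priori estimate $q \ge \rho^\sharp - O(t)$ with room to spare.

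To certify this I would not evaluate $\phi_\e$ in floating point. Instead I would express $q$ through the symmetric functions $\Sigma, \Pi$ of Lemma~\ref{l:f2-smooth}, computed via the reduced resolvent of Lemma~\ref{l:reduced-resolvent} with the fixed frames $E_0, E_\perp$ at the square. Concretely, $\phi_\e(t) - \pi^{-2} = \bigl(\pi^2\Sigma - 2|Q|\Pi\bigr)/(2\pi^2|Q|\Pi)$. The numerator $N(t) := \pi^2\Sigma(t\e) - 2|Q_{t\e}|\Pi(t\e)$ vanishes to second order at $t=0$, and the denominator is bounded below using $\Pi > \pi^4/4$ and $|Q| \ge 1 - (\rho^\sharp)^2$. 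So I would certify $N(t)/t^2 > 0$.

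This requires writing $K(t\e) - K(\bzero)$ and $M(t\e) - M(\bzero)$ as $t$ times integral-representation matrices from Section~\ref{app:integral-rep}. These come from Taylor remainders in integral form, so they are divided exactly by $t$ and stay finite at $t=0$. The scaled Schur complement $B^{\mathrm{eff}}/t$ then gives a $2\times 2$ problem in $\nu/t$, where $\lambda = \pi^2 + \nu$, and from it $\Sigma$ and $\Pi$ are polynomial in enclosed quantities, so $N/t^2$ is enclosed without cancellation.

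The sphere $S^3$ and the interval $[0,\rho^\sharp]$ I would subdivide into boxes, modded out by $D_4$ to reduce work by eightfold, and on each box I would enclose $N/t^2$ with interval arithmetic (\textsc{Intlab}), evaluating the integrals over $\square$ by validated quadrature with remainder bounds. A box is accepted when the lower enclosure is positive, and failing boxes are bisected. The cluster margins of Proposition~\ref{l:M3-bound}(a) are used on each box to guarantee that $S_\perp$ is defined and that the two roots found are indeed $\lambda_1, \lambda_2$.

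The main obstacle is that the enclosure must be sharp near $t=0$. There $N/t^2 \to \tfrac{\pi^2}{2}\cdot 2\pi^4\cdot \e^\top H_2\e$ up to normalization, and the directions $b, c$ have the smallest Hessian, of size about $0.12$, so overestimation in the scaled Schur complement can swamp the positive margin. I would first try first-order Taylor models in $t$ for the scaled matrices, combined with the exact rational-in-$\pi^2$ leading terms of Section~\ref{app:first-order}, refining boxes adaptively where the $b,c$ components dominate.
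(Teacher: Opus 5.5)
Your architecture is the paper's. Your $N(t)/t^2$ is exactly $S(t,\e)$ of \eqref{eq:S-definition}. Your scaled Schur complement, built with frames $E_0,E_\perp$ frozen at the square and integral-form Taylor remainders, is the paper's $F_t(\nu)$ of \eqref{eq:Ft-definition}. Two steps of the sketch, however, fail as written.

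First, the margins of Proposition~\ref{l:M3-bound}(a) do not make $S_\perp$ well defined. They constrain the spectrum of the full $5\times5$ pencil, whereas $S_\perp$ inverts the compression $E_\perp^\top\bigl(K(\p)-\lambda M(\p)\bigr)E_\perp$ onto a \emph{fixed} frame. By interlacing, the smallest eigenvalue of that compressed pencil is only known to lie in $[\lambda_1(\p),\lambda_3(\p)]$, and this interval meets the cluster window. Lemma~\ref{l:reduced-resolvent} gives invertibility only by continuity near $\p_\circ$. On a ball of fixed radius it must be certified box by box, as the paper does by checking $D_\perp+tC_{t,\perp\perp}(\nu)\succ0$ for every $\nu$ in the bracket. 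Combined with $F_t(\nu_-)\succ0\succ F_t(\nu_+)$ and Sylvester's law of inertia, this also identifies the two roots as $\lambda_1,\lambda_2$ and separates them from $\lambda_3$, with no external input. In the paper, the margins of Proposition~\ref{l:M3-bound}(a) are an output of this computation, not an input.

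Second, and more substantively, the reduced problem is not a $2\times2$ pencil. $F_t(\nu)$ depends rationally on $\nu$, through $\bigl(D_\perp+tC_{t,\perp\perp}(\nu)\bigr)^{-1}$ and through $C_{t,0\perp}(\nu)$. Hence the two roots of $\det F_t(\nu)=0$ are not the eigenvalues of a single matrix pair, and $\Sigma,\Pi$ are not polynomial in enclosed entries.

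Enclosing the two roots separately does not help either. They coalesce at $t=0$ (along $e_b$, for instance). Meanwhile $N/t^2$ needs $L=(\nu_1+\nu_2)/t$, with $\nu_i=(\lambda_i-\pi^2)/t$ as in \eqref{eq:nu-L-definitions}. That second division by $t$ survives only if the symmetric function itself is enclosed.

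This missing step is Theorem~\ref{thm:single-box-enclosures}:
\begin{itemize}
\item Linearize to $\widehat F_t(\nu)=F_t(0)+\nu\,\partial_\nu F_t(0)$ and apply Vieta to $\det\widehat F_t(\nu)=d_2\nu^2-t\widetilde d_1\nu+d_0$. The factor $t$ comes from the vanishing first-order trace $b_1(0,\e)=0$, written as the integral remainder \eqref{eq:d1-continuous-extension}.
\item Transfer to the true roots by comparing the Rayleigh roots $r(x),\widehat r(x)$. This gives $|\nu_i-\widehat\nu_i|\le t\eta/\beta$ whenever $\|\Psi_t\|_2\le\eta$ and $-\partial_\nu\widehat F_t\succeq\beta I_2$.
\item The comparison is insensitive to the coalescence and yields $|L-\widetilde d_1/d_2|\le2\eta/\beta$.
\end{itemize}
Your first-order Taylor models in $t$ handle the $t$-dependence but not this nonlinearity in $\nu$.

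Finally, no a priori bound of the form $q\ge\rho^\sharp-O(t)$ plays any role; $\rho^\sharp$ enters only as a number. Positivity out to $t=\rho^\sharp$ rests entirely on the certificate. Its lower bound $S\ge1.010\times10^{-2}$ is tiny next to $S(0,\e)=\pi^6\,\e^\top H_2\,\e\ge3232/27$, so there is no room to spare at the outer radius.
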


\begin{proof}
Write $\p = t\e$ with $\e \in S^3$ (the unit sphere in $\mathbb R^4$) and $0 < t \le \rho^\sharp$. By Proposition~\ref{l:M3-bound}, $f_2$ is real-analytic on a neighborhood of $\overline{B(\bzero, \rho^\sharp)}$ and the bottom cluster $\{\lambda_1, \lambda_2\}$ is isolated there. The second-order difference-quotient test of Appendix~\ref{app:diff-quotient} certifies, on a finite cover of $S^3 \times [0, \rho^\sharp]$, the scalar inequality $S(t, \e) > 0$ for all $t > 0$ (Corollary~\ref{cor:single-box-sign-test}, applied box by box), which by \eqref{eq:f2-minus-square-value} is equivalent to $f_2(t\e) > \pi^{-2}$. Hence $f_2(\p) > \tfrac{1}{\pi^2}$ for $0 < \|\p\| \le \rho^\sharp$. Combined with \eqref{e:KeyInequality}, as in the proof of Theorem~\ref{t:LocalMin}, this yields the stated conclusion.
\end{proof}

\subsection*{Numerical realization}
Appendix~\ref{app:diff-quotient},
Table~\ref{tab:local-second-order-certificate} verifies every final box from $13{,}824\times9$ initial
boxes and reaches depth $1$. It certifies
$S\ge 1.010 \times 10^{-2}$,
$\lambda_1\ge 7.630$, and
$\lambda_2\le 13.268$. The code and certificate
data are available in~\cite{SupplementaryCode}.

\section{The square is a global maximizer}
\label{s:GlobalMax}

In this section we establish Step~(II) of the strategy of
Section~\ref{s:RayleighRitz}, completing the proof of Theorem~\ref{t:Main}.
Writing $\lambda_1(\p) \le \cdots \le \lambda_5(\p)$ for the generalized
eigenvalues of the pencil $(K(\p), M(\p))$, we certify
\begin{equation}\label{e:global-target}
  |Q_\p|\,\lambda_1(\p) \;<\; \pi^2 \qquad \text{for all } \p \in \Omega_{\mathrm{II}} := \P_{\mathrm K} \setminus B\left(\bzero, \tfrac{\rho^\sharp}{2}\right).
\end{equation}
The global cover runs to the half-radius sphere $|\p| = \tfrac{\rho^\sharp}{2}$, so that Steps~(I) and~(II) overlap on the collar $\tfrac{\rho^\sharp}{2} \le |\p| \le \rho^\sharp$. 
For $\p \in \P$ with $M(\p) \succ 0$, so that $Q_\p$ is a convex quadrilateral and $V^2_\p \subset H^1_\diamond(Q_\p)$ is five-dimensional,  the Rayleigh--Ritz bound gives
$\lambda_1(\p) \ge \mu_1(Q_\p)$; since $\Omega_{\mathrm{II}} \subset \P_{\mathrm K} \subset \P$, \eqref{e:global-target} implies
$|Q_\p|\mu_1(Q_\p) < \pi^2$ on $\Omega_{\mathrm{II}}$. 

\subsection*{The box test}
The certificate is a direct comparison: enclose the pencil $(K(\p), M(\p))$ over the box, enclose its smallest generalized eigenvalue, and compare with $\pi^2$.

\begin{proposition}[Box eigenvalue bound]\label{p:box-bound}
Let $\mathcal B \subset P_{\mathrm C}$ be a closed axis-aligned box on which $M(\p) \succ 0$, so that the generalized eigenvalues of the pencil are well-defined, and suppose a certified enclosure
\[
  \lambda_1(\p) \in \bigl[\underline\lambda_1(\mathcal B),\, \overline\lambda_1(\mathcal B)\bigr]
  \quad \text{for all } \p \in \mathcal B,
  \qquad \underline\lambda_1(\mathcal B) > 0,
\]
is available. Write $Q(\mathcal B) := \sup_{\p \in \mathcal B} |Q_\p|$. If
\[
  Q(\mathcal B)\, \overline\lambda_1(\mathcal B) \;<\; \pi^2,
\]
then $0 < |Q_\p|\,\lambda_1(\p) < \pi^2$ on $\mathcal B$.
\end{proposition}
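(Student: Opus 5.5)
The claim follows by a direct chain of elementary inequalities; the only care needed is with signs, so that products of upper bounds remain upper bounds. Fix $\p \in \mathcal B$.

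First, positivity of the area. Since $\mathcal B \subset P_{\mathrm C}$, we have $|a| \le \alpha \approx 0.415$ and $|d| \le \delta \approx 0.083$. By \eqref{e:Area},
\[
|Q_\p| = 1 - a^2 - d^2 \ge 1 - \alpha^2 - \delta^2 > 0 .
\]
Alternatively, $M(\p) \succ 0$ already presupposes that the mean subtraction, which involves $1/|Q_\p|$, is well defined. Next, $\lambda_1(\p) \ge \underline\lambda_1(\mathcal B) > 0$ by the hypothesis on the enclosure. Together these give $|Q_\p|\,\lambda_1(\p) > 0$.

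For the upper bound, we use that both factors are positive. The definition of the supremum gives $|Q_\p| \le Q(\mathcal B)$, and the enclosure gives $\lambda_1(\p) \le \overline\lambda_1(\mathcal B)$. Note that $Q(\mathcal B) > 0$ because $|Q_\p| > 0$ on $\mathcal B$, and that $\overline\lambda_1(\mathcal B) \ge \underline\lambda_1(\mathcal B) > 0$. Since all four quantities are nonnegative, the two inequalities can be multiplied:
\[
|Q_\p|\,\lambda_1(\p) \le Q(\mathcal B)\,\overline\lambda_1(\mathcal B) < \pi^2 ,
\]
where the last step is the hypothesis.

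There is no real obstacle here. The only point needing attention is the sign check that justifies multiplying the two upper bounds, and this is exactly why the hypothesis $\underline\lambda_1(\mathcal B) > 0$ and the area positivity on $P_{\mathrm C}$ are needed. In practice $Q(\mathcal B)$ may be replaced by any computable upper bound, for instance $1 - \min_{\mathcal B} a^2 - \min_{\mathcal B} d^2$, and the conclusion holds unchanged.
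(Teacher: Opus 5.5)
Your proof is correct and matches the paper's: area positivity from the $P_{\mathrm C}$ bounds, $|Q_\p| \ge 1-\alpha^2-\delta^2>0$, followed by the chain $0<|Q_\p|\,\lambda_1(\p)\le Q(\mathcal B)\,\overline\lambda_1(\mathcal B)<\pi^2$, with your explicit sign check supplying the detail the paper leaves implicit. Your aside that $M(\p)\succ 0$ alone forces $|Q_\p|>0$ is not justified, since well-definedness of $1/|Q_\p|$ only gives $|Q_\p|\neq 0$, but your argument does not rely on it.
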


\begin{proof}
On $P_{\mathrm C}$ we have $|Q_\p| = 1 - a^2 - d^2 \ge 1 - \alpha^2 - \delta^2 > 0$ by \eqref{e:cube}, so for every $\p \in \mathcal B$,
$0 < |Q_\p|\,\lambda_1(\p) \;\le\; Q(\mathcal B)\, \overline\lambda_1(\mathcal B) \;<\; \pi^2$.
\end{proof}

No assumption $\mathcal B \subset \P$ is made (retained boxes may straddle $\partial\P$); the comparison with $\mu_1(Q_\p)$ is invoked only at $\p \in \P$.

\subsection*{Certified enclosures across a box}
The bounds are certified in interval arithmetic. The matrices $K(\p), M(\p)$ are
enclosed over $\mathcal B$ as interval matrices $K(\mathcal B), M(\mathcal B)$
containing $\{K(\p) \colon \p \in \mathcal B\}$ and $\{M(\p) \colon \p \in \mathcal B\}$,
by evaluating the integral representation of Section~\ref{app:integral-rep} with
the box as interval argument (Appendix~\ref{app:implementation}). The area factor is enclosed by 
\[
  Q(\mathcal B) = 1 - \underline{a^2}_{\mathcal B} - \underline{d^2}_{\mathcal B},
\]
where $\underline{\,\cdot\,}$ denotes the certified lower interval bound over
$\mathcal B$, i.e.\
$\underline{a^2}_{\mathcal B} := \inf_{\p \in \mathcal B} a^2$,
$\underline{d^2}_{\mathcal B} := \inf_{\p \in \mathcal B} d^2$ (so that
$Q(\mathcal B) = \sup_{\p \in \mathcal B} |Q_\p|$ by \eqref{e:Area}),
each a few interval operations.
Positive definiteness $M(\mathcal B)\succ0$ is certified
by an interval $LDL^\top$ factorization.

\subsection*{Box-covering algorithm}
The proof of \eqref{e:global-target} on $\Omega_{\mathrm{II}}$ is reduced to the
following procedure, the \emph{adaptive box-covering algorithm}.

\begin{enumerate}
\item[(II.a)] \emph{Initial covering.} Cover the outer-bound cube
$P_{\mathrm{C}}$ of \eqref{e:cube}, which contains $\P_{\mathrm{K}}$ by
Lemma~\ref{l:PK-bounded}, by uniform axis-aligned boxes. Discard any box lying
entirely inside $B\left(\bzero, \tfrac{\rho^\sharp}{2}\right)$ (covered by Step~(I)) or entirely
outside $\P_{\mathrm{K}}$, both decidable from the explicit polynomial
inequalities defining these regions. Use the $D_4$-symmetry of the spectrum
(see \eqref{e:equivariance}) to retain one representative per orbit,
reducing the box count by approximately a factor of $8$.

\item[(II.b)] \emph{Box-level test.} For a retained box $\mathcal B_k$ with
center $\p_k$, certify $M(\mathcal B_k) \succ 0$ and call \texttt{veigs} on
the full interval pencil. If the returned index range contains $1$ and the
certified upper bound $\overline\lambda_1(\mathcal B_k)$ satisfies
$Q(\mathcal B_k)\overline\lambda_1(\mathcal B_k)<\pi^2$, the box is verified.

\item[(II.c)] \emph{Adaptive subdivision.} Otherwise bisect $\mathcal B_k$ and
re-apply (II.b). The bisection may use any rule under which repeated
subdivision drives every box's diameter to zero; longest-side bisection is the
simplest such rule, and the implementation of Appendix~\ref{app:implementation}
uses a slack-driven coordinate choice with a longest-side fallback that
guarantees this property.
\end{enumerate}

\begin{theorem}[Finite-cover certificate]\label{t:box-cover-terminates}
If the adaptive box-covering algorithm terminates with every box verified or
discarded, then
$|Q_\p|\mu_1(Q_\p) < \pi^2$ for all $\p \in \Omega_{\mathrm{II}}$; combined with
Step~(I) this establishes \eqref{e:f2-target} and hence Theorem~\ref{t:Main}.
\end{theorem}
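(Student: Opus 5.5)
The plan is a covering-plus-soundness argument: show that the verified and discarded boxes together cover $\Omega_{\mathrm{II}}$, that each verified box yields the strict inequality, and then glue with Step~(I). Suppose the algorithm terminates. Its output is a finite family of boxes whose union, together with the orbit images of the retained representatives, contains $P_{\mathrm C}$. By Lemma~\ref{l:PK-bounded}, $P_{\mathrm C}$ contains $\P_{\mathrm K}$, and hence contains $\Omega_{\mathrm{II}}$.

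First take $\p \in \Omega_{\mathrm{II}}$ and a box $\mathcal B$ in the cover that contains it. The box cannot have been discarded for lying inside the open ball $B(\bzero,\tfrac{\rho^\sharp}{2})$, since $\p$ lies outside that ball. It cannot have been discarded for lying outside $\P_{\mathrm K}$, since $\p \in \P_{\mathrm K}$. Both discard criteria are decided soundly from the explicit polynomial inequalities \eqref{e:cross} and \eqref{e:vertex-distances}, so there is no risk of a wrongly discarded box. If $\mathcal B$ is not itself retained but is the image of a retained box $\mathcal B'$ under a $D_4$ element $\sigma^*$, use the symmetry. By \eqref{e:equivariance}, the pencils at $\p$ and at $(\sigma^*)^{-1}\p$ are congruent via the signed permutation $P_\sigma$, so they have the same generalized eigenvalues. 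Also $|Q_{\sigma^*\p}|=|Q_\p|$, and both $\P_{\mathrm K}$ and the ball are $D_4$-invariant. Hence it suffices to treat $\p$ in a verified retained box.

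On a verified box, $M(\mathcal B)\succ0$ is certified by the interval $LDL^\top$ factorization, and \texttt{veigs} returns an index-certified enclosure of $\lambda_1$ over the whole interval pencil, valid for every $\p\in\mathcal B$. The verification inequality is $Q(\mathcal B)\overline\lambda_1(\mathcal B)<\pi^2$. Proposition~\ref{p:box-bound} then gives $|Q_\p|\lambda_1(\p)<\pi^2$. Since $\p\in\P$ and $M(\p)\succ0$, $V^2_\p$ is a five-dimensional subspace of $H^1_\diamond(Q_\p)$. The Rayleigh--Ritz min--max bound gives $\mu_1(Q_\p)\le\lambda_1(\p)$, so $|Q_\p|\mu_1(Q_\p)<\pi^2$.

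For the gluing, take $\p\in\P_{\mathrm K}\setminus\{\bzero\}$. If $\|\p\|\le\rho^\sharp$, then Corollary~\ref{t:LocalMinExplicit} gives $f_2(\p)>\pi^{-2}$, and \eqref{e:KeyInequality} yields the strict inequality. Otherwise $\|\p\|>\rho^\sharp>\rho^\sharp/2$, so $\p\in\Omega_{\mathrm{II}}$. This proves \eqref{e:f2-target}. Kr\"oger's reduction \eqref{e:KrogerCap} places every $\p$ with $|Q_\p|\mu_1(Q_\p)\ge\pi^2$ in $\P_{\mathrm K}$. Theorem~\ref{t:surjective} identifies each convex quadrilateral, up to similarity, with some $Q_\p$. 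Together these show the square is the unique maximizer up to congruence (and similarity, hence congruence at equal area), and $|\square|\mu_1(\square)=\pi^2$ holds by the explicit spectrum.

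The main obstacle is the soundness of the enclosures rather than the logic. One must make sure the interval evaluation of the integrals of Section~\ref{app:integral-rep} truly encloses $K(\p),M(\p)$ for all $\p$ in the box. This includes the $1/|Q_\p|$ mean-subtraction term, which is safe since $|Q_\p|\ge1-\alpha^2-\delta^2>0$ on $P_{\mathrm C}$. It also includes quadrature remainders, which I would control by exact interval Taylor or Chebyshev enclosures of the trigonometric integrands. One must also check that the \texttt{veigs} index certification identifies the enclosed eigenvalue as the smallest one even when $\lambda_1,\lambda_2$ cluster.
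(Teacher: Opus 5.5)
Your proposal is correct and follows essentially the paper's argument. The boxes and their $D_4$-images cover $P_{\mathrm C}\supset\Omega_{\mathrm{II}}$, verified boxes give $|Q_\p|\mu_1(Q_\p)\le|Q_\p|\lambda_1(\p)<\pi^2$ via Proposition~\ref{p:box-bound} and the Rayleigh--Ritz comparison, and discarded boxes are harmless. One small imprecision: the implemented discard test $r_{\mathcal B}\le\rho^\sharp/2$ only places a box in the \emph{closed} ball $\overline{B(\bzero,\rho^\sharp/2)}$, so such a box can meet $\Omega_{\mathrm{II}}$ on the sphere $\|\p\|=\rho^\sharp/2$; the paper covers those points by Step~(I) rather than asserting the box misses $\Omega_{\mathrm{II}}$, and your gluing paragraph's appeal to Step~(I) already supplies this.
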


\begin{proof}
The boxes of the subdivision, together with
their $D_4$-images, cover $P_{\mathrm{C}} \supset \Omega_{\mathrm{II}}$, and each
is verified or discarded (the retained boxes are one representative per
$D_4$-orbit, and both the spectrum (by \eqref{e:equivariance}) and $\P_{\mathrm{K}}$ (Section~\ref{s:RayleighRitz}) are $D_4$-invariant, so
verifying a representative verifies its orbit). On a verified box $\mathcal B$, Proposition~\ref{p:box-bound} gives
$|Q_\p|\lambda_1(\p) < \pi^2$ for every $\p \in \mathcal B$; for
$\p \in \mathcal B \cap \Omega_{\mathrm{II}} \subset \P$ the Rayleigh--Ritz
comparison of Section~\ref{s:GlobalMax} applies, so
$|Q_\p|\mu_1(Q_\p) \le |Q_\p|\lambda_1(\p) < \pi^2$ there. Discarded boxes either lie
outside $\P_{\mathrm{K}}$, hence are disjoint from $\Omega_{\mathrm{II}}$, or lie
in $\overline{B\left(\bzero, \tfrac{\rho^\sharp}{2}\right)}$, where $|Q_\p|\mu_1(Q_\p) < \pi^2$ away
from $\bzero$ by Step~(I) (Corollary~\ref{t:LocalMinExplicit}). Hence
$|Q_\p|\mu_1(Q_\p) < \pi^2$ on $\Omega_{\mathrm{II}}$.
\end{proof}

\subsection*{Numerical realization}
The global run starts from $16$ boxes, verifies $114{,}627$, discards
$15{,}623$, performs $130{,}234$ bisections, and reaches depth $30$. Here,
\mbox{$\Delta_*\ge2.654\times10^{-5}$}; see
Appendix~\ref{app:implementation}. The
code and certificate data are available in~\cite{SupplementaryCode}.

\begin{proof}[Completion of the proof of Theorem~\ref{t:Main}]
The certified run terminates with every box verified or discarded, so
Theorem~\ref{t:box-cover-terminates} gives $|Q_\p|\mu_1(Q_\p) < \pi^2$ on
$\Omega_{\mathrm{II}}$; combined with Step~(I)
(Corollary~\ref{t:LocalMinExplicit}), this establishes \eqref{e:f2-target} and
hence Theorem~\ref{t:Main}.
\end{proof}

\section*{Acknowledgments}
This work was initiated at the American Institute of Mathematics (AIM) workshop \emph{Symmetry-breaking of optimal shapes} (Pasadena, CA, June 17--21, 2024). The authors thank Dorin Bucur and Lukas Bundrock for helpful discussions.


\printbibliography

\clearpage 
\appendix
\section*{Appendices}
\noindent Appendix~\ref{app:diff-quotient} gives the certified difference-quotient test on the local ball and the cluster margins, behind Proposition~\ref{l:M3-bound} and Corollary~\ref{t:LocalMinExplicit}; and Appendix~\ref{app:implementation} documents the per-box test of Section~\ref{s:GlobalMax}.

\section{Certified computation for the local step}\label{app:diff-quotient}

\begingroup
\fontsize{10pt}{11pt}\selectfont
\setlength{\abovedisplayskip}{6pt plus 2pt minus 1pt}
\setlength{\belowdisplayskip}{6pt plus 2pt minus 1pt}
\setlength{\abovedisplayshortskip}{3pt plus 1pt}
\setlength{\belowdisplayshortskip}{4pt plus 1pt minus 1pt}
\setlength{\jot}{2pt}
\setlength{\intextsep}{8pt plus 2pt minus 2pt}

By \eqref{e:KeyInequality}, Step~(I) follows from
\begin{equation}\label{eq:local-statement}
        f_2(\p) > \pi^{-2}
        \qquad (0 < \|\p\| \le \rho^\sharp),
        \qquad
        \rho^\sharp:=\frac{3232}{27\pi^6}.
\end{equation}
We prove \eqref{eq:local-statement} by reducing it to
\eqref{eq:f2-minus-square-value} and verifying its numerator by
interval arithmetic.

\subsection{The inequality to be certified}

Let $\p:=(a,b,c,d)\in\P$ be as in Section~\ref{s:QuadParam}, so that
$Q_{\bzero}=\square$.  For $\p\ne\bzero$, define
$S^3:=\{\e\in\mathbb R^4:\|\e\|=1\}$, $t:=\|\p\|$, and
$\e:=\p/\|\p\|\in S^3$, so that $\p=t\e$.
The function $f_2$ and the generalized
eigenvalues $\lambda_1\le\lambda_2$ of the pencil
$(K(\p),M(\p))$, with $K(\p),M(\p)\in\mathbb R^{5\times5}$, are
defined in Sections~\ref{s:RayleighRitz} and~\ref{app:integral-rep}.  Since
$\lambda_1(\bzero)=\lambda_2(\bzero)=\pi^2$,
we use symmetric functions of this eigenvalue pair.  For
$\e\in S^3$ and $0<t\le\rho^\sharp$, set
\begin{equation}\label{eq:nu-L-definitions}
    \nu_i(t,\e)
        :=\frac{\lambda_i(t\e)-\pi^2}{t}\quad(i=1,2),\qquad
    L(t,\e)
        :=\frac{\nu_1(t,\e)+\nu_2(t,\e)}{t}
          =\frac{\lambda_1(t\e)+\lambda_2(t\e)-2\pi^2}{t^2}.
\end{equation}
For the pair $\lambda_1,\lambda_2$, write
$\Sigma(\p):=\lambda_1(\p)+\lambda_2(\p)$ and
$G(\p):=(\lambda_1(\p)-\pi^2)(\lambda_2(\p)-\pi^2)$.
These symmetric functions are smooth in a neighborhood of $\p=\bzero$.
At $\p=\bzero$, equation \eqref{e:f2-via-Sigma-Pi}, together with
$D|Q_\p|_{\p=\bzero}=0$ and $\nabla f_2(\bzero)=0$, gives
$\Sigma(\bzero)=2\pi^2$, $D\Sigma(\bzero)=0$, $G(\bzero)=0$, and $DG(\bzero)=0$;
indeed, $D(\lambda_1\lambda_2)(\bzero)=\pi^2D\Sigma(\bzero)$ and
$DG(\bzero)=D(\lambda_1\lambda_2)(\bzero)-\pi^2D\Sigma(\bzero)=0$.
Taylor's formula therefore yields the continuous extensions
\[
 L(t,\e)
 =\int_0^1(1-s)D^2\Sigma(st\e)[\e,\e]\,ds,
 \qquad
 \nu_1(t,\e)\nu_2(t,\e)
 =\int_0^1(1-s)D^2G(st\e)[\e,\e]\,ds
 \quad (t\ge0).
\]
Define
\begin{equation}\label{eq:S-definition}
    S(t,\e)
    :=-\pi^2L(t,\e)-2\nu_1(t,\e)\nu_2(t,\e)
      +2(e_a^2+e_d^2)
       \left\{\pi^4+t^2\bigl(\pi^2L(t,\e)
       +\nu_1(t,\e)\nu_2(t,\e)\bigr)\right\}.
\end{equation}
Equations \eqref{e:Area}, \eqref{eq:nu-L-definitions}, and
\eqref{eq:S-definition} give
\begin{equation}\label{eq:f2-minus-square-value}
    f_2(t\e)-\pi^{-2}
    =\frac{t^2S(t,\e)}
    {2\pi^2|Q_{t\e}|\lambda_1(t\e)\lambda_2(t\e)}.
\end{equation}
Since $e_a^2+e_d^2\le1$, we have
\begin{equation}\label{eq:area-positive-local-ball}
 |Q_{t\e}|=1-t^2(e_a^2+e_d^2)
 \ge 1-(\rho^\sharp)^2>0.
\end{equation}
By the estimate $\min_i c_i(\p)>0.69$ on
$\overline{B(\bzero,\rho^\sharp)}$ (see the proof of
Proposition~\ref{l:M3-bound} (b)),
\[
\overline{B(\bzero,\rho^\sharp)}\subset\operatorname{int}\P.
\]
Corollary~\ref{cor:local-cluster-gap} also gives
$M(\p)\succ0$ and $K(\p)\succ0$ on
$\overline{B(\bzero,\rho^\sharp)}$, hence
$\lambda_1(\p)\lambda_2(\p)>0$.
Thus \eqref{eq:area-positive-local-ball}
and \eqref{eq:f2-minus-square-value} reduce
\eqref{eq:local-statement} to
\begin{equation}\label{eq:S-positive-target}
 S(t,\e)>0
 \qquad
 (\e,t)\in S^3\times(0,\rho^\sharp].
\end{equation}

\begin{remark}
The construction extends the first-order difference quotients used in
\cite{endoLiu2026partII} for Dirichlet eigenvalues of triangles in a
neighborhood of an equilateral triangle.  At the square, the double
eigenvalue requires the quotients in \eqref{eq:nu-L-definitions};
\eqref{eq:Ft-definition} gives their bounds without third derivatives
of $f_2$.
\end{remark}

\subsection{The Schur complement reduction}

In the ordered basis
$((1,0),(0,1),(1,1),(2,0),(0,2))$, define the columns of
$E=M(\bzero)^{-1/2}$ in Section~\ref{ss:cluster-sum} by
\[
 E_0:=
 \begin{bmatrix}
  \sqrt2&0\\0&\sqrt2\\0&0\\0&0\\0&0
 \end{bmatrix},
 \qquad
 E_\perp:=
 \begin{bmatrix}
  0&0&0\\0&0&0\\2&0&0\\0&\sqrt2&0\\0&0&\sqrt2
 \end{bmatrix}.
\]
At $\p=\bzero$, we have
\[
    M(\bzero)
      =\operatorname{diag}\left(\tfrac12,\tfrac12,\tfrac14,
                                \tfrac12,\tfrac12\right),\qquad
    E_0^\top M(\bzero)E_0=I_2,\qquad
    E_\perp^\top M(\bzero)E_\perp=I_3,\qquad
    E_0^\top M(\bzero)E_\perp=0.
\]
For $X\in\mathbb R^{5\times5}$, write
\[
    X_{00}:=E_0^\top XE_0,\qquad
    X_{0\perp}:=E_0^\top XE_\perp,\qquad
    X_{\perp0}:=E_\perp^\top XE_0,\qquad
    X_{\perp\perp}:=E_\perp^\top XE_\perp.
\]
The four blocks of $K(\bzero)-\pi^2M(\bzero)$ vanish except for
$D_\perp:=\bigl(K(\bzero)-\pi^2M(\bzero)\bigr)_{\perp\perp}
=\operatorname{diag}(\pi^2,3\pi^2,3\pi^2)$, as in
Section~\ref{ss:cluster-sum}.

For $\e\in S^3$, $\nu\in\mathbb R$, and $t\ge0$, define
\begin{equation}\label{eq:Ct-definition}
    C_t(\nu)
    :=\int_0^1D(K-\pi^2M)(st\e)[\e]\,ds-\nu M(t\e).
\end{equation}
In particular, we have
\[
    C_0(\nu)=D(K-\pi^2M)(\bzero)[\e]-\nu M(\bzero),\qquad
    K(t\e)-(\pi^2+t\nu)M(t\e)
    =K(\bzero)-\pi^2M(\bzero)+tC_t(\nu).
\]
Next, define
\begin{equation}\label{eq:Ft-definition}
    \Phi_t(\nu,\e)
      :=C_{t,0\perp}(\nu)
        \bigl(D_\perp+tC_{t,\perp\perp}(\nu)\bigr)^{-1}
        C_{t,\perp0}(\nu),\qquad
    F_t(\nu):=C_{t,00}(\nu)-t\Phi_t(\nu,\e).
\end{equation}
By \eqref{eq:Ct-definition}, we have
\begin{equation}\label{eq:schur-block-matrix}
\begin{bmatrix}E_0^\top\\E_\perp^\top\end{bmatrix}
 \bigl(K(t\e)-(\pi^2+t\nu)M(t\e)\bigr)
 \begin{bmatrix}E_0&E_\perp\end{bmatrix}
=
\begin{bmatrix}
 tC_{t,00}(\nu)&tC_{t,0\perp}(\nu)\\
 tC_{t,\perp0}(\nu)&D_\perp+tC_{t,\perp\perp}(\nu)
\end{bmatrix}.
\end{equation}
If $t>0$ and $D_\perp+tC_{t,\perp\perp}(\nu)$ is invertible, then
\eqref{eq:schur-block-matrix} and \eqref{eq:Ft-definition} identify
$tF_t(\nu)$ with the Schur complement
$B^{\mathrm{eff}}(\pi^2+t\nu;t\e)$ of
Lemma~\ref{l:reduced-resolvent}, applied with the fixed frames
$V=E_0$, $W=E_\perp$; here the invertibility of the
$\perp$-block is certified on each box in
Section~\ref{sec:single-box-test} rather than by continuity.
The determinant identity in the proof of
Lemma~\ref{l:reduced-resolvent} then gives
\begin{equation}\label{eq:schur-equivalence}
    \det F_t(\nu)=0
    \quad\text{if and only if there exists }x\ne0\text{ such that}\quad
    K(t\e)x=(\pi^2+t\nu)M(t\e)x.
\end{equation}

\subsection{Interval conditions}
\label{sec:single-box-test}

For
$\underline e_r\le\overline e_r$,
$0\le\underline t\le\overline t\le\rho^\sharp$, and
$\nu_-<\nu_+$, define
\[
 [\e]:=\prod_{r=1}^4[\underline e_r,\overline e_r],\qquad
 [t]:=[\underline t,\overline t],\qquad
 [\nu]:=[\nu_-,\nu_+],\qquad
 \mathcal B:=([\e]\cap S^3)\times[t].
\]
The required inclusion is
$\bigl\{\nu_i(t,\e):(\e,t)\in\mathcal B,\ t>0,\ i=1,2\bigr\}
\subset[\nu]$.
For this box, define
\[
    \widehat F_t(\nu)
      :=F_t(0)+\nu\,\partial_\nu F_t(0),\qquad
    \Psi_t(\nu,\e)
      :=\Phi_t(\nu,\e)-\Phi_t(0,\e)
        -\nu\,\partial_\nu\Phi_t(0,\e).
\]
Write
$\det\widehat F_t(\nu)=d_2(t,\e)\nu^2-b_1(t,\e)\nu+d_0(t,\e)$.
At $t=0$, we have
$\widehat F_0(\nu)
=\bigl(DK(\bzero)[\e]-\pi^2DM(\bzero)[\e]\bigr)_{00}-\nu I_2$.
Since $\det(A-\nu I_2)=\nu^2-\operatorname{tr}(A)\nu+\det A$,
\eqref{e:f2-via-Sigma-Pi} and
$D|Q_\p|_{\p=\bzero}=0$ give
\[
 b_1(0,\e)
 =\operatorname{tr}
   \bigl(DK(\bzero)[\e]-\pi^2DM(\bzero)[\e]\bigr)_{00}
 =D(\lambda_1+\lambda_2)(\bzero)[\e]
 =-2\pi^4Df_2(\bzero)[\e]
 =0.
\]
Define
\begin{equation}\label{eq:d1-continuous-extension}
    \widetilde d_1(t,\e)
       :=\int_0^1\partial_t b_1(st,\e)\,ds .
\end{equation}
Thus $b_1(t,\e)=t\widetilde d_1(t,\e)$ and
\begin{equation}\label{eq:Fhat-determinant}
    \det\widehat F_t(\nu)
       =d_2(t,\e)\nu^2-t\widetilde d_1(t,\e)\nu+d_0(t,\e),
\end{equation}
so \eqref{eq:Fhat-determinant} remains an interval formula when
$0\in[t]$.  Since $C_{t,00}$ is affine in $\nu$, we have
\begin{equation}\label{eq:Fhat-error}
    F_t(\nu)-\widehat F_t(\nu)=-t\Psi_t(\nu,\e).
\end{equation}

The following conditions imply the bounds required in
\eqref{eq:S-definition}.
\begin{theorem}[Enclosures on $\mathcal B$]
\label{thm:single-box-enclosures}
Let $\eta\ge0$ and $\beta>0$ be fixed, and suppose that, for every $(\e,t)\in\mathcal B$ and every $\nu\in[\nu]$, we have
\begin{equation}\label{eq:root-inclusion-test}
    M(t\e) \succ 0, \quad  
    D_\perp+tC_{t,\perp\perp}(\nu) \succ 0, \quad
    F_t(\nu_-) \succ 0, \quad 
    F_t(\nu_+) \prec 0, \quad 
    \partial_\nu F_t(\nu) \prec 0
\end{equation}
and
\begin{equation}\label{eq:Fhat-comparison-test}
    \|\Psi_t(\nu,\e)\|_2 \le \eta, \qquad -\partial_\nu\widehat F_t(\nu) \succeq \beta I_2.
\end{equation}
Then $\nu_1(t,\e),\nu_2(t,\e)\in[\nu]$ for every
$(\e,t)\in\mathcal B$ with $t>0$, and
\begin{equation}\label{eq:symmetric-single-box-enclosures}
\begin{aligned}
    L(t,\e)
      &\in \frac{\widetilde d_1}{d_2}
            +\frac{2\eta}{\beta}[-1,1],\\
    \nu_1(t,\e)\nu_2(t,\e)
      &\in \frac{d_0}{d_2}
            +\left(
              2\max\{|\nu_-|,|\nu_+|\}\frac{t\eta}{\beta}
              +\frac{t^2\eta^2}{\beta^2}
             \right)[-1,1].
\end{aligned}
\end{equation}
\end{theorem}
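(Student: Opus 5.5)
The plan is to work pointwise: fix $(\e,t)\in\mathcal B$ with $t>0$, prove both claims for that point, and note that every hypothesis is uniform over the box. There are three steps: an inertia count that locates $\lambda_1,\lambda_2$; a monotonicity argument for the eigenvalue branches of the $2\times2$ families $F_t$ and $\widehat F_t$; and Vieta's formulas applied to the quadratic $\det\widehat F_t$.

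\emph{Step 1: localization via inertia.} The matrix $[E_0\mid E_\perp]$ is invertible, so \eqref{eq:schur-block-matrix} is a congruence. Write $Z(\nu):=D_\perp+tC_{t,\perp\perp}(\nu)\succ0$. Block elimination of \eqref{eq:schur-block-matrix} by this invertible lower-right block shows, by Sylvester's law of inertia, that $\operatorname{In}\bigl(K(t\e)-(\pi^2+t\nu)M(t\e)\bigr)=\operatorname{In}(Z(\nu))+\operatorname{In}(tF_t(\nu))$.
\begin{itemize}
\item At $\nu=\nu_-$ we have $F_t(\nu_-)\succ0$, so the pencil matrix has five positive eigenvalues. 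Since $M(t\e)\succ0$, this means every generalized eigenvalue exceeds $\pi^2+t\nu_-$.
\item At $\nu=\nu_+$ we have $F_t(\nu_+)\prec0$, so the inertia is $(3,0,2)$. Hence exactly two generalized eigenvalues lie below $\pi^2+t\nu_+$.
\end{itemize}
These two eigenvalues are therefore $\lambda_1,\lambda_2$, both in $\pi^2+t[\nu_-,\nu_+]$. Equivalently, $\nu_1,\nu_2\in[\nu]$.

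\emph{Step 2: eigenvalue branches.} Let $\varphi_1(\nu)\le\varphi_2(\nu)$ be the eigenvalues of the symmetric matrix $F_t(\nu)$, and $\widehat\varphi_1\le\widehat\varphi_2$ those of $\widehat F_t(\nu)$.
\begin{itemize}
\item Since $\partial_\nu F_t\prec0$ on $[\nu]$, Weyl's inequality makes each $\varphi_k$ strictly decreasing there. It is positive at $\nu_-$ and negative at $\nu_+$, so $\varphi_k$ has a unique zero in $[\nu]$.
\item By \eqref{eq:schur-equivalence} and the count in Step~1, these two zeros (with multiplicity) are exactly $\nu_1\le\nu_2$, matched by index $k$.
\item The family $\widehat F_t$ is affine in $\nu$ with constant slope satisfying $\partial_\nu\widehat F_t\preceq-\beta I_2$. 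Weyl then gives $\widehat\varphi_k(\nu')-\widehat\varphi_k(\nu)\le-\beta(\nu'-\nu)$ for $\nu'>\nu$, on all of $\mathbb R$. So each $\widehat\varphi_k$ has a unique real zero $\widehat\nu_k$, and these are the two roots of the quadratic $\det\widehat F_t$. Its leading coefficient is $d_2=\det(\partial_\nu\widehat F_t)\ge\beta^2>0$.
\item By \eqref{eq:Fhat-error} and Weyl, $|\varphi_k-\widehat\varphi_k|\le t\|\Psi_t\|_2\le t\eta$ on $[\nu]$. Hence $|\widehat\varphi_k(\nu_k)|\le t\eta$. The slope bound then gives $|\nu_k-\widehat\nu_k|\le t\eta/\beta$.
\end{itemize}

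\emph{Step 3: Vieta.} Applied to \eqref{eq:Fhat-determinant}, Vieta's formulas give $\widehat\nu_1+\widehat\nu_2=t\widetilde d_1/d_2$ and $\widehat\nu_1\widehat\nu_2=d_0/d_2$.
\begin{itemize}
\item Sum: $|\nu_1+\nu_2-t\widetilde d_1/d_2|\le2t\eta/\beta$. Dividing by $t>0$ gives the enclosure of $L$ in \eqref{eq:symmetric-single-box-enclosures}.
\item Product: write $\nu_1\nu_2-\widehat\nu_1\widehat\nu_2=\nu_1(\nu_2-\widehat\nu_2)+\widehat\nu_2(\nu_1-\widehat\nu_1)$. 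We have $|\nu_1|\le\max\{|\nu_-|,|\nu_+|\}$ and $|\widehat\nu_2|\le\max\{|\nu_-|,|\nu_+|\}+t\eta/\beta$. This yields exactly the radius $2\max\{|\nu_-|,|\nu_+|\}\,t\eta/\beta+t^2\eta^2/\beta^2$.
\end{itemize}

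The main obstacle is Step~1 together with the branch matching in Step~2. One has to justify that the zeros of $\det F_t$ in $[\nu]$ are precisely $\nu_1,\nu_2$, with the right multiplicity, and not some other pair. I would handle this entirely through the inertia count, which also covers a double root cleanly. Everything else is Weyl's inequality applied to $2\times2$ symmetric matrices.
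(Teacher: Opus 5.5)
Your proof is correct, and its outer structure matches the paper's. The congruence \eqref{eq:schur-block-matrix}, which you read as inertia additivity and the paper as min--max, places $\lambda_1,\lambda_2$ in $\pi^2+t[\nu]$ strictly below $\lambda_3$. The same two ingredients, the $t\eta$ bound from $\Psi_t$ and the slope $\beta$, give $|\nu_i-\widehat\nu_i|\le t\eta/\beta$. Vieta's formulas on \eqref{eq:Fhat-determinant} then finish. You also make explicit the splitting $\nu_1\nu_2-\widehat\nu_1\widehat\nu_2=\nu_1(\nu_2-\widehat\nu_2)+\widehat\nu_2(\nu_1-\widehat\nu_1)$ behind the product radius, which the paper leaves implicit.

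Where you differ is the middle step. The paper fixes a unit vector $x$ and defines Rayleigh functionals $r(x)$ and $\widehat r(x)$ as the unique roots of $\nu\mapsto x^\top F_t(\nu)x$ and $\nu\mapsto x^\top \widehat F_t(\nu)x$. It obtains $\nu_1=\min r$ and $\nu_2=\max r$ (likewise for $\widehat\nu_i$) from \eqref{eq:schur-equivalence} and the min--max principle, i.e.\ \eqref{eq:rayleigh-root-formulas}. The comparison $\beta|\widehat r(x)-r(x)|\le t\eta$ is then a single scalar inequality per $x$ that passes directly to the min and max.

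You fix $\nu$ instead and follow the eigenvalue branches $\varphi_k(\nu)$ of the $2\times 2$ family. Weyl's inequality gives you both monotonicity and $|\varphi_k-\widehat\varphi_k|\le t\eta$. The two views are dual: since $\varphi_1=\min_x x^\top F_t x$ and $\varphi_2=\max_x x^\top F_t x$, the zeros of your branches are exactly $\min r$ and $\max r$. So the resulting bounds coincide. The paper's route is shorter. Yours uses only Weyl's inequality and Sylvester's law of inertia, and it makes the matching of the branch zeros with $\nu_1\le\nu_2$ explicit rather than delegating it to \eqref{eq:rayleigh-root-formulas}.

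On the multiplicity point you flag as the main obstacle: \eqref{eq:schur-equivalence} together with your Step 1 already settles it. The zeros of your two branches in $[\nu]$ and the pair $\{\nu_1,\nu_2\}$ are two-element multisets in $[\nu]$ with the same underlying set, hence they are equal, double roots included.
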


\begin{proof}
Fix $(\e,t)\in\mathcal B$, $t>0$.  Equations
\eqref{eq:schur-block-matrix} and \eqref{eq:Ft-definition} give
\[
 K(t\e)-(\pi^2+t\nu)M(t\e)
 \ \sim\
 \begin{bmatrix}
  tF_t(\nu)&0\\
  0&D_\perp+tC_{t,\perp\perp}(\nu)
 \end{bmatrix},
\]
where $\sim$ denotes congruence.  The min--max principle and
\eqref{eq:root-inclusion-test} imply
that if $F_t(\nu_-)\succ0$, then $\pi^2+t\nu_-<\lambda_1(t\e)$, and
if $F_t(\nu_+)\prec0$, then
$\lambda_2(t\e)<\pi^2+t\nu_+<\lambda_3(t\e)$.
Hence $\nu_i(t,\e)\in[\nu_-,\nu_+]$, $i=1,2$.

Write $\widehat F_t(\nu)=A_t-\nu B_t$ and let
$\widehat\nu_1\le\widehat\nu_2$ be its generalized eigenvalues.
By \eqref{eq:Fhat-comparison-test}, we have
$B_t\succeq\beta I_2$ and $d_2=\det B_t\ge\beta^2>0$, and
Vieta's formulas applied to \eqref{eq:Fhat-determinant} give
$\widehat\nu_1+\widehat\nu_2=t\widetilde d_1/d_2$ and
$\widehat\nu_1\widehat\nu_2=d_0/d_2$.

For $\|x\|=1$, define $r(x)$ and $\widehat r(x)$ by
$x^\top F_t(r(x))x=0$ and
$x^\top\widehat F_t(\widehat r(x))x=0$.
Existence and uniqueness follow from
\eqref{eq:root-inclusion-test} and
\eqref{eq:Fhat-comparison-test}.  Moreover,
\eqref{eq:schur-equivalence} and the min--max principle give
\begin{equation}\label{eq:rayleigh-root-formulas}
 \nu_1=\min_{\|x\|=1}r(x),\qquad
 \nu_2=\max_{\|x\|=1}r(x),\qquad
 \widehat\nu_1=\min_{\|x\|=1}\widehat r(x),\qquad
 \widehat\nu_2=\max_{\|x\|=1}\widehat r(x).
\end{equation}
By \eqref{eq:Fhat-error}, we have
\[
 \beta|\widehat r(x)-r(x)|
 \le
 \left|x^\top
 \bigl(\widehat F_t(r(x))-\widehat F_t(\widehat r(x))\bigr)x\right|
 =\left|x^\top
 \bigl(\widehat F_t(r(x))-F_t(r(x))\bigr)x\right|
 \le t\eta .
\]
Together with \eqref{eq:rayleigh-root-formulas}, this yields
$|\nu_i-\widehat\nu_i|\le t\eta/\beta$, $i=1,2$.
Therefore, we have
\[
 \left|L-\frac{\widetilde d_1}{d_2}\right|
 \le\frac{2\eta}{\beta},\qquad
 \left|\nu_1\nu_2-\frac{d_0}{d_2}\right|
 \le
 2\max\{|\nu_-|,|\nu_+|\}\frac{t\eta}{\beta}
 +\frac{t^2\eta^2}{\beta^2},
\]
which proves \eqref{eq:symmetric-single-box-enclosures}.
\end{proof}

For fixed $(\e,t)\in\mathcal B$ with $t>0$, let
$\widehat\nu_1\le\widehat\nu_2$ be the zeros of
$\det\widehat F_t$, and define
$E_t(\nu):=\widehat F_t(\nu)-F_t(\nu)=t\Psi_t(\nu,\e)$
and $\delta_i:=\nu_i-\widehat\nu_i$, $i=1,2$.
For disjoint intervals containing the two roots, the following identity
improves \eqref{eq:symmetric-single-box-enclosures}.
\begin{lemma}\label{lem:exact-root-refinement}
For $\{i,j\}=\{1,2\}$, we have
\begin{equation}\label{eq:exact-root-refinement}
 \nu_i-\widehat\nu_i
 =
 \frac{
 \operatorname{tr}\!\left(
   \operatorname{adj}\widehat F_t(\nu_i)\,E_t(\nu_i)\right)
 -\det E_t(\nu_i)}
 {d_2(\nu_i-\widehat\nu_j)},
\end{equation}
where $\operatorname{adj} A$ denotes the adjugate of the matrix $A$.

If $[\nu_i]$ contains $\nu_i$ and $\widehat\nu_i$ and
$[\nu_1]\cap[\nu_2]=\varnothing$, the denominator in
\eqref{eq:exact-root-refinement} does not contain zero, and
\begin{equation}\label{eq:refined-symmetric-quantities}
 L
 =\frac{\widetilde d_1}{d_2}
   +\frac{\delta_1+\delta_2}{t},\qquad
 \nu_1\nu_2
 =\frac{d_0}{d_2}
   +\widehat\nu_1\delta_2+\widehat\nu_2\delta_1
   +\delta_1\delta_2 .
\end{equation}
\end{lemma}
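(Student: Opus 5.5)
The plan is to expand $\det F_t(\nu_i)$ around $\widehat F_t(\nu_i)$ using the exact $2\times2$ determinant identity, and then compare the result with the factored form of $\det\widehat F_t$. For any $2\times2$ matrices $A,E$ we have
\[
\det(A-E)=\det A-\operatorname{tr}(\operatorname{adj}A\,E)+\det E ,
\]
which one can check directly, since $\det$ is quadratic and $\det(-E)=\det E$.

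First apply this identity with $A=\widehat F_t(\nu_i)$ and $E=E_t(\nu_i)$, so that $A-E=F_t(\nu_i)$. By \eqref{eq:schur-equivalence}, $\nu_i$ is a root of $\det F_t$, because $\pi^2+t\nu_i=\lambda_i(t\e)$ is a generalized eigenvalue. (In the double-root case this still holds, since $\det F_t(\nu)=0$ exactly at the eigenvalues.) Therefore
\[
0=\det\widehat F_t(\nu_i)-\operatorname{tr}\bigl(\operatorname{adj}\widehat F_t(\nu_i)E_t(\nu_i)\bigr)+\det E_t(\nu_i).
\]

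Next, since $\det\widehat F_t$ is the quadratic \eqref{eq:Fhat-determinant} with leading coefficient $d_2$ and roots $\widehat\nu_1,\widehat\nu_2$, we can write
\[
\det\widehat F_t(\nu_i)=d_2(\nu_i-\widehat\nu_i)(\nu_i-\widehat\nu_j).
\]
Substituting this into the previous identity and dividing by $d_2(\nu_i-\widehat\nu_j)$ gives \eqref{eq:exact-root-refinement}. This division is legitimate only when $d_2\ne0$ and $\nu_i\ne\widehat\nu_j$. We have $d_2\ge\beta^2>0$ under \eqref{eq:Fhat-comparison-test}. The second condition is where the disjointness hypothesis enters: $\nu_i\in[\nu_i]$ and $\widehat\nu_j\in[\nu_j]$, and these intervals are disjoint, so $\nu_i-\widehat\nu_j\ne0$. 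Hence the interval enclosure of the denominator, $d_2\cdot([\nu_i]-[\nu_j])$, excludes zero.

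For \eqref{eq:refined-symmetric-quantities}, use Vieta as in the proof of Theorem~\ref{thm:single-box-enclosures}: $\widehat\nu_1+\widehat\nu_2=t\widetilde d_1/d_2$ and $\widehat\nu_1\widehat\nu_2=d_0/d_2$. Then
\[
L=\frac{\nu_1+\nu_2}{t}=\frac{\widehat\nu_1+\widehat\nu_2+\delta_1+\delta_2}{t}=\frac{\widetilde d_1}{d_2}+\frac{\delta_1+\delta_2}{t},
\]
and expanding $(\widehat\nu_1+\delta_1)(\widehat\nu_2+\delta_2)$ gives the product formula.

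The only delicate point is the justification that $\nu_i$ is genuinely a root of $\det F_t$ with the correct labeling, together with the nonvanishing of the denominator. Both follow from the hypotheses just stated, so there is no serious obstacle. The remaining work is routine bookkeeping, namely making sure that $d_2>0$ is available, which it is from the comparison test.
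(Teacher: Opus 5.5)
Your proof is correct and follows the paper's argument: you expand $\det F_t(\nu_i)=0$ using the $2\times 2$ identity $\det(A-E)=\det A-\operatorname{tr}(\operatorname{adj}A\,E)+\det E$, substitute $\det\widehat F_t(\nu)=d_2(\nu-\widehat\nu_1)(\nu-\widehat\nu_2)$, and finish with Vieta's formulas. Your explicit check that the division is legitimate ($d_2\ge\beta^2>0$, and $\nu_i\ne\widehat\nu_j$ because $[\nu_1]\cap[\nu_2]=\varnothing$) spells out a step the paper leaves implicit.
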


\begin{proof}
\[
0
=\det F_t(\nu_i)
=\det\widehat F_t(\nu_i)
  -\operatorname{tr}\!\left(
    \operatorname{adj}\widehat F_t(\nu_i)E_t(\nu_i)\right)
  +\det E_t(\nu_i),
\qquad
 \det\widehat F_t(\nu)
=d_2(\nu-\widehat\nu_1)(\nu-\widehat\nu_2).
\]
These identities give \eqref{eq:exact-root-refinement}; substitution
of $\nu_i=\widehat\nu_i+\delta_i$ into Vieta's formulas gives
\eqref{eq:refined-symmetric-quantities}.
\end{proof}

The inverse in \eqref{eq:Ft-definition} gives a matrix
$R_t(\nu)$ satisfying $E_t(\nu)=t^3\nu^2R_t(\nu)$, and hence
\[
 \frac{\delta_i}{t}
 =
 \frac{
 \operatorname{tr}\!\left(
  \operatorname{adj}\widehat F_t(\nu_i)\,E_t(\nu_i)/t\right)
 -\det E_t(\nu_i)/t}
 {d_2(\nu_i-\widehat\nu_j)}.
\]
Moreover, since $E_t(\nu)/t=t^2\nu^2R_t(\nu)$ and
$\det E_t(\nu)/t=t^5\nu^4\det R_t(\nu)$,
the quotient extends continuously to $t=0$ without division by
$[t]$.
If $\widehat\nu_j\in[\nu_j]$, then
$\nu_i-\widehat\nu_j\in[\nu_i]-[\nu_j]$.
Substitution in \eqref{eq:S-definition} gives the required sign.
\begin{corollary}
\label{cor:single-box-sign-test}
Under the hypotheses of Theorem~\ref{thm:single-box-enclosures}, suppose
that the enclosures in
\eqref{eq:symmetric-single-box-enclosures} or their intersections with
\eqref{eq:refined-symmetric-quantities}, substituted in
\eqref{eq:S-definition}, give $S(t,\e)>0$
throughout $\mathcal B$.  Then
$f_2(t\e)>\pi^{-2}$ for every $(\e,t)\in\mathcal B$ with $t>0$.
\end{corollary}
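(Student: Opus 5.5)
The plan is to reduce the conclusion to the sign identity \eqref{eq:f2-minus-square-value} and then chain together the enclosures already established. Fix $(\e,t)\in\mathcal B$ with $t>0$; since $\overline t\le\rho^\sharp$, the point $\p=t\e$ lies in the closed local ball. Theorem~\ref{thm:single-box-enclosures} applies on $\mathcal B$. It places $\nu_1(t,\e),\nu_2(t,\e)$ in $[\nu]$, and the congruence in its proof shows that $\lambda_1(t\e)\le\lambda_2(t\e)<\lambda_3(t\e)$, i.e.\ the bottom cluster is separated. It also gives the enclosures \eqref{eq:symmetric-single-box-enclosures} for $L(t,\e)$ and $\nu_1\nu_2$. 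When the hypotheses of Lemma~\ref{lem:exact-root-refinement} hold, the refined values \eqref{eq:refined-symmetric-quantities} are exact expressions for the same quantities, so intersecting the two enclosures is legitimate. Either way, the true values of $L$ and $\nu_1\nu_2$ at $(t,\e)$ lie in the intervals that are substituted.

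Next I would use the fact that $S(t,\e)$ in \eqref{eq:S-definition} is a polynomial in $(L,\nu_1\nu_2,t,e_a,e_d)$. Interval evaluation of this polynomial over the enclosures, with $t\in[t]$ and $\e\in[\e]$, therefore contains the true $S(t,\e)$. The hypothesis says this interval lies in $(0,\infty)$, so $S(t,\e)>0$.

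It remains to pass from $S>0$ to $f_2>\pi^{-2}$ via \eqref{eq:f2-minus-square-value}, which requires the denominator to be positive. The area factor is positive by \eqref{eq:area-positive-local-ball}. For the eigenvalues, the root inclusion gives $\lambda_1(t\e)>\pi^2+t\nu_-$. I would use this together with the margin $\lambda_1\ge 7.630>0$ of Proposition~\ref{l:M3-bound}(a), or alternatively with $K\succ0$ and $M\succ0$, to get $\lambda_1\lambda_2>0$. Then $t^2>0$ and a positive denominator give $f_2(t\e)-\pi^{-2}>0$.

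The one step needing care is the validity of \eqref{eq:f2-minus-square-value} itself at this particular $(t,\e)$. That identity is pure algebra: expand $\Sigma=2\pi^2+t^2L$ and $\lambda_1\lambda_2=\pi^4+t^2\pi^2L+t^2\nu_1\nu_2$, and write $|Q_{t\e}|=1-t^2(e_a^2+e_d^2)$. It holds wherever $f_2=\Sigma/(2|Q|\Pi)$, which needs only that $\lambda_1,\lambda_2$ are the two smallest eigenvalues. That condition is guaranteed by the cluster separation established above, so no analyticity argument is needed pointwise.
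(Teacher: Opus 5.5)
Your proposal is correct and follows the paper's route. The paper's proof is the one line ``this follows immediately from \eqref{eq:f2-minus-square-value}'', relying on the reduction just before it: there the denominator is shown positive via \eqref{eq:area-positive-local-ball} and $\lambda_1\lambda_2>0$, the latter from $M\succ0$, $K\succ0$ in Corollary~\ref{cor:local-cluster-gap}. You make explicit what the paper leaves implicit, namely that the substituted enclosures contain the true $S(t,\e)$ by inclusion isotonicity, and the algebra behind \eqref{eq:f2-minus-square-value}.
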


\begin{proof}
This follows immediately from \eqref{eq:f2-minus-square-value}.
\end{proof}

\subsection{Finite cover and certification}
\label{subsec:single-box-algorithm}

Let $u_1,\ldots,u_4$ be the standard basis of $\mathbb R^4$.  For
$r\in\{1,\ldots,4\}$, $\sigma\in\{-1,1\}$, and
$\{j_1,j_2,j_3\}=\{1,\ldots,4\}\setminus\{r\}$, define
\begin{equation}\label{eq:sphere-chart}
 \chi_{r,\sigma}(x)
 :=
 \frac{\sigma u_r+\sum_{\ell=1}^3x_\ell u_{j_\ell}}
 {\sqrt{1+\|x\|^2}},
 \qquad x\in[-1,1]^3.
\end{equation}
For every $\e\in S^3$, choose $r\in\arg\max_j|e_j|$ and set
$\sigma=\operatorname{sgn}(e_r)$ and
$x_\ell=e_{j_\ell}/|e_r|$.  Then $x_\ell\in[-1,1]$ and
$\e=\chi_{r,\sigma}(x)$.
Hence the eight maps in \eqref{eq:sphere-chart} cover $S^3$.  Set
$I_k:=\bigl[-1+\tfrac{k}{6},\,-1+\tfrac{k+1}{6}\bigr]$,
$k=0,\ldots,11$.
Interval evaluation of \eqref{eq:sphere-chart} on
$I_{k_1}\times I_{k_2}\times I_{k_3}$ gives
$8\cdot12^3=13{,}824$
boxes in the $\e$ coordinates.  In the $t$ coordinate, set
\begin{equation}\label{eq:radial-grid}
 t_k:=\frac{\rho^+}{16}a_k,\qquad
 (a_0,\ldots,a_9):=(0,2,4,6,8,10,12,14,15,16),
\end{equation}
where $\rho^+$ is a certified outward-rounded upper bound for
$(1+10^{-12})\rho^\sharp$, and hence $\rho^+\ge\rho^\sharp$.
The last two radial intervals have width $\rho^+/16$ rather than
$\rho^+/8$, refining the cover in the neighborhood of $t=\rho^\sharp$. 
Equations \eqref{eq:sphere-chart}--\eqref{eq:radial-grid} therefore cover $S^3\times[0,\rho^\sharp]$.

Algorithm~\ref{alg:single-box-certificate} is applied to every product
of one of these boxes and $[t_k,t_{k+1}]$.  An undecided product
$I_{k_1}\times I_{k_2}\times I_{k_3}$ is replaced by its eight
midpoint subdivisions.  For every child $\mathcal B'$ of
$\mathcal B$, we have
\begin{equation}\label{eq:subdivision-inclusion}
 \mathcal B'\subset\mathcal B,\qquad
 [g]_{\mathcal B'}\subset[g]_{\mathcal B}
\end{equation}
for every interval expression $[g]$ evaluated on the parent.

\begin{algorithm}[p]
\footnotesize
\caption{Certificate on $\mathcal B$}
\label{alg:single-box-certificate}
\begin{algorithmic}[1]
\Require $[\e]$ and $[t]$; set
\(\mathcal B:=([\e]\cap S^3)\times[t]\).
\Ensure \textsc{verified} or \textsc{undecided}.

\State Enclose \(K(t\e)\), \(M(t\e)\), and the blocks of \(C_t(\nu)\)
from \eqref{eq:Ct-definition}.
\State Set
\[
 B_0:=D_\perp+tC_{t,\perp\perp}(0),\qquad
 m_M:=\min_{1\le r\le5}
 \left(
   \inf[M_{rr}]_{\mathcal B}
   -\sum_{s\ne r}\sup|[M_{rs}]_{\mathcal B}|
 \right).
\]
\If{\(m_M\le0\) or \([B_0]_{\mathcal B}\succ0\) cannot be certified}
  \State Return \textsc{undecided}.
\EndIf

\State Form \([B_0]_{\mathcal B}^{-1}\) and enclose
\(F_t(0)\), \(\partial_\nu F_t(0)\), \(d_2\),
\(\widetilde d_1\), and \(d_0\) using
\eqref{eq:Ft-definition}--\eqref{eq:Fhat-determinant}.
\State Set
\[
 \widehat F_t(\nu):=F_t(0)+\nu\,\partial_\nu F_t(0),
 \qquad
 \beta:=\inf\lambda_{\min}
 \bigl(-[\partial_\nu\widehat F_t]_{\mathcal B}\bigr).
\]
\If{\(\inf[d_2]_{\mathcal B}\le0\) or \(\beta\le0\)}
  \State Return \textsc{undecided}.
\EndIf

\State Using \eqref{eq:Fhat-determinant}, set
\[
 [\Delta]
 :=\max\!\left\{
 [t]^2[\widetilde d_1]^2-4[d_2][d_0],\,0
 \right\},
 \qquad
 [\widehat\nu_{1,2}]
 :=\frac{[t][\widetilde d_1]\mp\sqrt{[\Delta]}}
 {2[d_2]}.
\]
\State Starting with
\(\operatorname{hull}([\widehat\nu_1]\cup[\widehat\nu_2])\),
choose \([\nu]=[\nu_-,\nu_+]\) and a finite subdivision
\([\nu]=\bigcup_W W\).
\State On every \(W\), first certify
\[
 [D_\perp+tC_{t,\perp\perp}(\nu)]_{\mathcal B\times W}\succ0,
\]
and only then form its interval inverse and enclose
\(\Psi_t\) and \(\partial_\nu F_t\) using
\eqref{eq:Ft-definition} and \eqref{eq:Fhat-error}.
\State Choose \(\eta\ge0\) such that
\[
 \eta\ge
 \max_W\sup\|[\Psi_t]_{\mathcal B\times W}\|_2,
 \qquad
 [\widehat\nu_i]
 +\frac{[t]\eta}{\beta}[-1,1]
 \subset[\nu],
 \quad i=1,2.
\]
\If{the preceding inclusions or any condition in
\eqref{eq:root-inclusion-test}--\eqref{eq:Fhat-comparison-test}
cannot be certified}
  \State Return \textsc{undecided}.
\EndIf

\State Set
\[
 [\nu_i]
 :=[\widehat\nu_i]+\frac{[t]\eta}{\beta}[-1,1],
 \qquad i=1,2,
\]
and form the enclosures in
\eqref{eq:symmetric-single-box-enclosures}.
\If{\([\nu_1]\cap[\nu_2]=\varnothing\)}
  \State Intersect these enclosures with those obtained from
  \eqref{eq:exact-root-refinement}--%
  \eqref{eq:refined-symmetric-quantities}.
\EndIf

\State Substitute the resulting enclosures for
\(L\) and \(\nu_1\nu_2\) into \eqref{eq:S-definition} to obtain
\([S]_{\mathcal B}\), and set
\[
 [\lambda_i]:=\pi^2+[t][\nu_i],
 \qquad i=1,2.
\]
\State Use \texttt{veigs} to obtain verified enclosures
\([\lambda_1]^{\mathrm v}\) and \([\lambda_3]^{\mathrm v}\)
for indices \(1\) and \(3\) of
\((K(t\e),M(t\e))\), and set
\[
 [\lambda_1]\leftarrow
 [\lambda_1]\cap[\lambda_1]^{\mathrm v}.
\]
\If{this intersection is empty or
\(\inf[\lambda_3]^{\mathrm v}\le16\)}
  \State Return \textsc{undecided}.
\EndIf
\State Set
\[
 m_3:=\inf[\lambda_3]^{\mathrm v}-\frac{3\pi^2}{2}.
\]

\If{\(\inf[S]_{\mathcal B}>0\) and \(\inf[\lambda_1]>0\)}
  \State Return \textsc{verified} by
  Corollary~\ref{cor:single-box-sign-test}, with certified bounds
  \(m_M\) and \(m_3\).
\Else
  \State Return \textsc{undecided}.
\EndIf
\end{algorithmic}
\end{algorithm}

\begin{table}[t]
\centering
\renewcommand{\arraystretch}{1.05}
\begin{tabular}{p{0.43\linewidth}p{0.47\linewidth}}
\hline
Quantity & Certified value \\
\hline
Number of Boxes & \(13{,}824\times9\) \\
Certified lower bound for \(S\) &
\(1.010\times10^{-2}\) \\
Certified lower bound for \(\lambda_1\) &
\(7.630\) \\
Certified upper bound for \(\lambda_2\) &
\(13.268\) \\
Certified lower bound for
$\lambda_{\min}(M)$ & $0.0521$ \\
Certified lower bound for
$\lambda_3-3\pi^2/2$ & $3.030$ \\
Outcome & \textsc{verified} on every final box \\
\hline
\end{tabular}
\caption{Certified results for
Algorithm~\ref{alg:single-box-certificate} on
$0<\|\p\|\le\rho^\sharp$. The $t$-intervals of the cover include $t=0$, where
the enclosed quantities are the continuous extensions of the difference
quotients \eqref{eq:nu-L-definitions}.}
\label{tab:local-second-order-certificate}
\end{table}

Algorithm~\ref{alg:single-box-certificate} is implemented by the
routine \path{qn_single_box_certificate}, together with \path{veigs}.
On each box,
\eqref{eq:root-inclusion-test}, \eqref{eq:Fhat-comparison-test},
$M\succ0$, $\inf[\lambda_1]>0$, and \eqref{eq:S-positive-target} hold;
\eqref{eq:subdivision-inclusion} justifies every inherited enclosure.
Moreover, the Gershgorin bound gives
$M\succeq 0.0521I_5$, while
the verified enclosure obtained by
\path{veigs} gives $\lambda_3\ge17.834$. Hence $\lambda_3>16$ and
$\lambda_3-3\pi^2/2\ge 3.030$.
The same inequalities separate $\lambda_1,\lambda_2$ from
\(\lambda_3\).

\begin{corollary}[Separation from the third eigenvalue]
\label{cor:local-cluster-gap}
On $\overline{B(\bzero,\rho^\sharp)}$, we have
$M(\p)\succ0$, $K(\p)\succ0$, and
$\lambda_2(\p)<\lambda_3(\p)$.
Consequently, the closed ball has an open neighborhood $U$ on which
$\{\lambda_1,\lambda_2\}$ is isolated and its symmetric functions,
including $f_2$, are analytic.
\end{corollary}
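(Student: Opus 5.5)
The plan is to read every claim off the certified per-box data produced by Algorithm~\ref{alg:single-box-certificate} over the finite cover of $S^3\times[0,\rho^\sharp]$ from \eqref{eq:sphere-chart}--\eqref{eq:radial-grid}. That cover includes $t=0$ through the continuous extensions. Each $\p$ in the closed ball, $\p\neq\bzero$, is $t\e$ with $\e$ in some chart box and $t$ in some radial interval. The subdivision property \eqref{eq:subdivision-inclusion} guarantees that each final box carries valid enclosures. The center $\p=\bzero$ is handled by hand: there $M(\bzero)$ and $K(\bzero)$ are explicit positive diagonal matrices, and the spectrum is $\pi^2,\pi^2,2\pi^2,4\pi^2,4\pi^2$.

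For $M\succ0$ I would use the Gershgorin quantity $m_M$ from the algorithm. It is certified positive on every box, and the table gives the uniform bound $\lambda_{\min}(M)\ge0.0521$, so $M(\p)\succ0$ throughout. For the separation $\lambda_2<\lambda_3$, Theorem~\ref{thm:single-box-enclosures} gives $\nu_i\in[\nu]$ on each box, hence $\lambda_2(t\e)\le\pi^2+t\nu_+$. The table records $\lambda_2\le13.268$. Meanwhile \texttt{veigs} gives $\lambda_3\ge17.834>16$, which is index-certified for index $3$, so $\lambda_2\le13.268<16<\lambda_3$. For $K\succ0$, I would note that $\inf[\lambda_1]>0$ is checked on every box (the certified bound is $\lambda_1\ge7.630$). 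Since $M\succ0$, all generalized eigenvalues are at least $\lambda_1>0$. Therefore $K=M^{1/2}(M^{-1/2}KM^{-1/2})M^{1/2}$ is congruent to a positive definite matrix, so $K\succ0$.

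For the consequence, the conditions $c_i>0.69$ (from the proof of Proposition~\ref{l:M3-bound}(b)), $M\succ0$, and the strict gap are open conditions with strict margins on a compact set. By continuity of $K,M$ (Section~\ref{app:integral-rep}) and continuity of the eigenvalues, they therefore persist on an open neighborhood $U$. To invoke Lemma~\ref{l:f2-smooth} with its fixed contour $\Gamma$, I would first check hypothesis (iii) on the closed ball: $7.630>\pi^2/2$, $13.268<3\pi^2/2$, and $\lambda_3-3\pi^2/2\ge3.030$. Since these are strict inequalities, they also persist on $U$. Lemma~\ref{l:f2-smooth} then yields analyticity of $\Sigma$, $\Pi$, and hence of $f_2$.

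The main obstacle is soundness of the gluing rather than any estimate. One must make sure that the index-$3$ enclosure from \texttt{veigs} truly refers to the third eigenvalue, not merely some eigenvalue, so that the gap is genuine. One must also make sure that boxes containing $t=0$ still certify $M\succ0$ and the eigenvalue bounds through the enclosures of $K(t\e)$ and $M(t\e)$ directly, without relying on the difference quotients.
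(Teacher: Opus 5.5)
Your proposal is correct and matches the paper's argument. You handle the center explicitly, take $M\succ0$ from the Gershgorin margin and $K\succ0$ from $\inf[\lambda_1]>0$ with $M\succ0$ (which the paper leaves implicit), and obtain the neighborhood $U$ by persistence of open conditions; your route through Lemma~\ref{l:f2-smooth} mirrors Proposition~\ref{l:M3-bound}(b). The one difference is how you get $\lambda_2<\lambda_3$: the paper's displayed proof uses the inertia sandwich $\lambda_2(t\e)<\pi^2+t\nu_+<\lambda_3(t\e)$ of Theorem~\ref{thm:single-box-enclosures} (from $F_t(\nu_+)\prec0$ and $D_\perp+tC_{t,\perp\perp}(\nu_+)\succ0$), which avoids your concern about the \texttt{veigs} index, though the paper also records your numerical separation $13.268<16<17.834$.
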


\begin{proof}
\[
\begin{array}{ll}
\p=\bzero:
 &(\lambda_1,\ldots,\lambda_5)
   =(\pi^2,\pi^2,2\pi^2,4\pi^2,4\pi^2),\\[2mm]
0<\|\p\|\le\rho^\sharp:
 &\lambda_2(t\e)<\pi^2+t\nu_+<\lambda_3(t\e)
 \quad\text{by Theorem~\ref{thm:single-box-enclosures}
 and Table~\ref{tab:local-second-order-certificate}.}
\end{array}
\]
Together with $M(\p)\succ0$ and $K(\p)\succ0$, this gives
$\min_{\|\p\|\le\rho^\sharp}
\bigl(\lambda_3(\p)-\lambda_2(\p)\bigr)>0$.
The last assertion follows from analytic perturbation theory for the
symmetric generalized eigenvalue problem with $M(\p)\succ0$.
\end{proof}

\endgroup

\section{Certified computation for the global step
(Theorem~\ref{t:box-cover-terminates})}\label{app:implementation}

\begingroup
\fontsize{10pt}{11pt}\selectfont
\setlength{\abovedisplayskip}{6pt plus 2pt minus 1pt}
\setlength{\belowdisplayskip}{6pt plus 2pt minus 1pt}
\setlength{\abovedisplayshortskip}{3pt plus 1pt}
\setlength{\belowdisplayshortskip}{4pt plus 1pt minus 1pt}
\setlength{\jot}{2pt}
\setlength{\intextsep}{8pt plus 2pt minus 2pt}

This appendix documents the per-box test of Section~\ref{s:GlobalMax},
as implemented in the supplementary code~\cite{SupplementaryCode} in
MATLAB with \textsc{Intlab}~\cite{Rump1999INTLAB} and the verified
generalized-eigenvalue solver
\texttt{veigs}~\cite{LiuYanagisawa2025veigs}.

Write
\begin{equation}\label{eq:global-box-notation}
 \mathcal B
 :=\{\p_c+\delta:|\delta_r|\le h_r,\ r=1,\ldots,4\},
 \qquad \p_c=(p_{c,1},\ldots,p_{c,4}),
 \qquad [p_r]_{\mathcal B}:=[p_{c,r}-h_r,p_{c,r}+h_r].
\end{equation}
For a scalar-, vector-, or matrix-valued function $F$, the notation
$[F]_{\mathcal B}$ denotes an interval enclosure satisfying
$\{F(\p):\p\in\mathcal B\}\subseteq[F]_{\mathcal B}$, interpreted
entrywise for vectors and matrices.

\subsection*{Enclosure of the pencil}

Using $\Lambda_{m,n}$ and $J$ from Section~\ref{app:pullback}, with the
modes enumerated by $\mathcal I_2$ as in Section~\ref{app:integral-rep}
so that $\Lambda_1,\ldots,\Lambda_5$ are single-indexed, set
\[
 \begin{aligned}
 R_{ij}(\p)&:=\int_\square \Lambda_i\Lambda_jJ\,du\,dv,
 &m_i(\p)&:=\int_\square \Lambda_iJ\,du\,dv,
 \qquad i,j=1,\ldots,5,\\
 q(\p)&:=|Q_\p|=1-a^2-d^2,
 &M(\p)&=R(\p)-\frac{m(\p)m(\p)^\top}{q(\p)}.
 \end{aligned}
\]
For $F\in\{K,R,m\}$, interval forward differentiation gives the centered
enclosure
$\{F(\p):\p\in\mathcal B\}
\subseteq
[F]_{\mathcal B}
:=[F(\p_c)]+\sum_{r=1}^4[\partial_rF]_{\mathcal B}[-h_r,h_r]$.
The entries of $[F(\p_c)]$ and $[\partial_rF]_{\mathcal B}$ are computed by a
$20\times20$ tensor Gauss--Legendre rule and include rigorous bounds for the
quadrature remainders.  Interval arithmetic then gives
$[q]_{\mathcal B}:=1-[a]_{\mathcal B}^2-[d]_{\mathcal B}^2$ and
$[M]_{\mathcal B}:=[R]_{\mathcal B}
-[m]_{\mathcal B}[m]_{\mathcal B}^\top/[q]_{\mathcal B}$.
Write
$[q]_{\mathcal B}=[\underline q_{\mathcal B},\overline q_{\mathcal B}]$
(the upper endpoint $\overline q_{\mathcal B}$ is the $Q(\mathcal B)$ of
Proposition~\ref{p:box-bound})
and $[\pi^2]=[\underline{\pi^2},\overline{\pi^2}]$.
The notation $[M]_{\mathcal B}\succ0$ means that every symmetric matrix in
$[M]_{\mathcal B}$ is positive definite; this is certified by interval
$LDL^\top$ factorization.

Suppose $\underline q_{\mathcal B}>0$ and
$[M]_{\mathcal B}\succ0$.  Applied to the interval pencil
$([K]_{\mathcal B},[M]_{\mathcal B})$, \texttt{veigs} returns an interval
$[\lambda]_{\mathcal B}
=[\underline\lambda_{\mathcal B},\overline\lambda_{\mathcal B}]$
and certified index data $J_{\mathcal B}\subseteq\{1,\ldots,5\}$,
rigorous for \emph{every} point pencil contained in the interval matrices;
the only property used in the proof is that $1\in J_{\mathcal B}$ and
$\lambda_1(\p)\in[\lambda]_{\mathcal B}$ for every $\p\in\mathcal B$.
The library \texttt{veigs} implements the verified computation method by
\cite{behnke1991calculation}; see also
\cite{yamamoto2001simple}.
The implementation uses the interval arithmetic library
\textsc{Intlab}~\cite{Rump1999INTLAB}.
Define
$\Delta_{\mathcal B}
:=\underline{\pi^2}
-\overline q_{\mathcal B}\,\overline\lambda_{\mathcal B}$.
The box $\mathcal B$ is accepted precisely when
\begin{equation}\label{eq:global-code-test}
 \underline q_{\mathcal B}>0,
 \qquad [M]_{\mathcal B}\succ0,
 \qquad 1\in J_{\mathcal B},
 \qquad \underline\lambda_{\mathcal B}>0,
 \qquad \Delta_{\mathcal B}>0.
\end{equation}
For every accepted box and every $\p\in\mathcal B$,
\begin{equation}\label{eq:global-accepted-box-conclusion}
 0<q(\p)\lambda_1(\p)
 \le\overline q_{\mathcal B}\,\overline\lambda_{\mathcal B}
 =\underline{\pi^2}-\Delta_{\mathcal B}
 <\underline{\pi^2}\le\pi^2.
\end{equation}

\subsection*{Finite cover and subdivision}

Let $\mathcal G_0$ be the $3^4$ uniform subdivision of $P_{\mathrm C}$ from
\eqref{e:cube}.  For $\mathcal B$ as in
\eqref{eq:global-box-notation}, define
$r_{\mathcal B}:=\bigl(\sum_{r=1}^4(|p_{c,r}|+h_r)^2\bigr)^{1/2}$
and
$d_{ij}(\p):=|\v_i(\p)-\v_j(\p)|^2$, and let
$[c_i]_{\mathcal B}
=[\underline c_{i,\mathcal B},\overline c_{i,\mathcal B}]$ and
$[d_{ij}]_{\mathcal B}
=[\underline d_{ij,\mathcal B},\overline d_{ij,\mathcal B}]$
be certified enclosures of the defining polynomials of
$\P_{\mathrm K}$.  A box is discarded when at least one of the following
conditions holds:
\begin{equation}\label{eq:global-discard-test}
 r_{\mathcal B}\le\frac{\rho^\sharp}{2},
 \qquad
 \min_{1\le i\le4}\overline c_{i,\mathcal B}<0,
 \qquad
 \max_{1\le i<j\le4}\underline d_{ij,\mathcal B}>T.
\end{equation}
The first condition gives
$\mathcal B\subseteq\overline{B(\bzero,\rho^\sharp/2)}$; either of the other
two gives $\mathcal B\cap\P_{\mathrm K}=\varnothing$.  After this test, one
representative of each $D_4$-orbit is retained; the initial active family has
$16$ boxes.

A box satisfying neither \eqref{eq:global-discard-test} nor
\eqref{eq:global-code-test} is bisected.  The selected coordinate $r$ satisfies
$h_r\ge\tfrac12\max_{1\le s\le4}h_s$.
With $e_r$ the $r$th coordinate vector, the two children are
\begin{equation}\label{eq:global-bisection}
 \mathcal B^{\pm}:=\Bigl\{\p_c\pm\tfrac{h_r}{2}e_r+\delta:
 |\delta_r|\le\tfrac{h_r}{2},\ |\delta_s|\le h_s\ (s\ne r)\Bigr\},
 \qquad
 \mathcal B=\mathcal B^-\cup\mathcal B^+.
\end{equation}
Thus every subdivision preserves the covered set. The choice of bisection
coordinate and the initial orbit reduction are floating-point heuristics:
only the final list of boxes enters the proof, each retained box being
tested by the certified conditions \eqref{eq:global-code-test}.  Let
$\mathcal A$, $\mathcal D$, and $\mathcal U$ denote the accepted boxes,
the discarded boxes, and the boxes satisfying neither test at the maximum
allowed depth $60$; let $N_{\mathrm{bisect}}$ be the number of bisections
and $d_{\max}$ the maximum attained depth.  The finite certificate is
\begin{equation}\label{eq:global-completion-test}
 \mathcal U=\varnothing,
 \qquad
 \Delta_*:=\min_{\mathcal B\in\mathcal A}\Delta_{\mathcal B}>0.
\end{equation}
The certified output is
\[
 |\mathcal A|=114{,}627,
 \quad |\mathcal D|=15{,}623,
 \quad |\mathcal U|=0,
 \quad N_{\mathrm{bisect}}=130{,}234,
 \quad d_{\max}=30,
 \quad \Delta_*\ge2.654\times10^{-5}.
\]

Since $\P_{\mathrm K}\subset P_{\mathrm C}$ and each bisection satisfies
\eqref{eq:global-bisection}, equations
\eqref{eq:global-discard-test} and \eqref{eq:global-completion-test} give
\begin{equation}\label{eq:global-final-cover}
 \Omega_{\mathrm{II}}
 \subseteq
 \overline{B(\bzero,\rho^\sharp/2)}
 \cup
 \bigcup_{g\in D_4}\ \bigcup_{\mathcal B\in\mathcal A}g\mathcal B.
\end{equation}
On the first set in \eqref{eq:global-final-cover}, Step~(I) gives
$|Q_\p|\mu_1(Q_\p)<\pi^2$ for every nonzero $\p$.  On each accepted box $\mathcal B$,
\eqref{eq:global-accepted-box-conclusion} holds for every $\p\in\mathcal B$, and
for $\p\in\mathcal B\cap\Omega_{\mathrm{II}}\subset\P$ the Rayleigh--Ritz bound
applies, giving
$|Q_\p|\mu_1(Q_\p)
\le |Q_\p|\lambda_1(\p)<\pi^2$ there.
Both sides are invariant under $D_4$.  Hence
$|Q_\p|\mu_1(Q_\p)<\pi^2$ for all
$\p\in\Omega_{\mathrm{II}}$, as asserted in
Theorem~\ref{t:box-cover-terminates}.

\endgroup

\end{document}